\newcommand\note[1]%
\def\today{\number\year-\ifnum\month<10
0\fi\number\month-\ifnum\day<10 0\fi\number\day}
\def\hour{\ifnum\count253<10
0\number\count253\else\number\count253\fi}
\def\minute{\ifnum\count254<10
0\number\count254\else\number\count254\fi}
\newtheorem{theorem}{Theorem}[section]
\newtheorem{lemma}[theorem]{Lemma}
\newtheorem{proposition}[theorem]{Proposition}
\newtheorem{corollary}[theorem]{Corollary}
\theoremstyle{definition}
\newtheorem{example}[theorem]{Example}
\newtheorem{remark}[theorem]{Remark}
\numberwithin{equation}{section}
\newcommand\lie{\mathfrak}
\newcommand{\g}{\lie{g}}
\renewcommand{\t}{\lie{t}}
\newcommand\bb[1]{{\text{\bf#1}}}
\newcommand\N{\bb{N}}
\newcommand\Z{\bb{Z}} 
\newcommand\Q{\bb{Q}}
\newcommand\R{\bb{R}} 
\newcommand\C{\bb{C}}
\renewcommand\k{\bb{k}}
\renewcommand\P{\bb{P}}
\newcommand\ca{\mathscr}
\newcommand\X{\ca{X}}
\newcommand\func[1]{\operatorname{\mathrm{#1}}}
\renewcommand\det{\func{det}}
\newcommand\End{\func{End}}
\newcommand\Hom{\func{Hom}}
\renewcommand\index{\func{index}}
\newcommand\pr{\func{pr}}
\newcommand\Rep{\operatorname{\lie{Rep}}}
\newcommand\sheafhom{\operatorname{\ca{H}\!\mathit{om}}}
\newcommand\Spec{\func{Spec}}
\newcommand\group[1]{{\text{\bf#1}}}
\newcommand\SU{\group{SU}}
\newcommand\U{\group{U}}
\newcommand\abs[1]{\lvert#1\rvert}
\newcommand\quot[1][\kern.3ex]{/\kern-.7ex/_{\kern-.4ex#1}}
\newcommand\bigquot[1][\,\,]{\big/\kern-.85ex\big/_{\!\!#1}}
\newcommand\powl{[\kern-.3ex[}
\newcommand\powr{]\kern-.3ex]}
\newcommand\bigpowl{\bigl[\kern-.6ex\bigl[}
\newcommand\bigpowr{\bigr]\kern-.6ex\bigr]}
\newcommand\sur{\mathrel{\to\kern-1.8ex\to}}
\newcommand\iso{\mathrel{\hookrightarrow\kern-1.8ex\to}}
\newcommand\longto{\longrightarrow}
\newcommand\longsur{\mathrel{\longrightarrow\kern-1.8ex\to}}
\newcommand\Edh{{D\mkern-15mu\text{\raise0.1em\hbox{--}}\mkern8mu}}
\renewcommand\d{\group{d}}
\newcommand\zerodots%
\newcommand\bigzerodots%
\newcommand\pt{{\rm pt}}
\renewcommand\subset{\subseteq}
\renewcommand\supset{\supseteq}
\begin{document}


\title[Divided differences and character formula in K-theory]{Divided
differences and the Weyl character formula in equivariant K-theory}

\author{Megumi Harada}

\email{Megumi.Harada@math.mcmaster.ca}

\address{Department of Mathematics and Statistics, McMaster
University, Hamilton, ON L8S 4K1, Canada}

\author{Gregory D. Landweber}

\email{gregland@bard.edu}

\address{Department of Mathematics, Bard College, Annandale-on-Hudson,
NY 12504-5000, USA}

\author{Reyer Sjamaar}

\email{sjamaar@math.cornell.edu}

\address{Department of Mathematics, Cornell University, Ithaca, NY
14853-4201, USA}

\subjclass[2000]{19E08, 19L47}

\date{4 March 2010}


\begin{abstract}
Let $X$ be a topological space and $G$ a compact connected Lie group
acting on $X$.  Atiyah proved that the $G$-equivariant K-group of $X$
is a direct summand of the $T$-equivariant K-group of $X$, where $T$
is a maximal torus of $G$.  We show that this direct summand is equal
to the subgroup of $K_T^*(X)$ annihilated by certain divided
difference operators.  If $X$ consists of a single point, this
assertion amounts to the Weyl character formula.  We also give
sufficient conditions on $X$ for $K_G^*(X)$ to be isomorphic to the
subgroup of Weyl invariants of $K_T^*(X)$.
\end{abstract}


\maketitle

\tableofcontents


\section*{Introduction}\label{section;introduction}

Hermann Weyl's theorem of the maximal torus allows one for many
purposes to pass from a compact connected Lie group $G$ to a maximal
torus $T$.  A famous example of this principle is the Weyl character
formula, which enables one to compute the characters of the
irreducible representations of $G$ in terms of the characters of $T$
and the action of the Weyl group on $T$.

One of the results of this paper is an extension of the Weyl character
formula to the equivariant K-theory of a compact $G$-space $X$.
Atiyah \cite{atiyah;bott-periodicity-index} proved that the
restriction map from the $G$-equivariant K-ring $K_G^*(X)$ to the
$T$-equivariant K-ring $K_T^*(X)$ has a natural left inverse.  This
``wrong-way'' or pushforward homomorphism is defined by means of the
Dolbeault operator associated with an invariant complex structure on
the homogeneous space $G/T$.  Although Atiyah proved that $K_G^*(X)$
is a direct summand of $K_T^*(X)$, he did not tell whether this direct
summand is determined by the Weyl group action on $K_T^*(X)$.  In
fact, it is easy to see that $K_G^*(X)$ is contained in the Weyl
invariants of $K_T^*(X)$, but an example of McLeod
\cite{mcleod;kunneth-equivariant} (see also Example
\ref{example;mcleod}) shows that this inclusion is in general not an
equality.

In this paper we show that the action of the Weyl group $W$ on
$K_T^*(X)$ extends to an action of a Hecke ring $\ca{D}$ generated by
divided difference operators, which was introduced in the context of
Schubert calculus by Demazure
\cite{demazure;desingularisation,demazure;nouvelle-formule}.  This
$\ca{D}$-action is analogous to a $\ca{D}$-action on the K-theory of
the classifying space $BT$ previously defined by Bressler and Evens
\cite{bressler-evens;schubert-braid-generalized}.  The ring $\ca{D}$
contains a left ideal $I(\ca{D})$, which we dub the \emph{augmentation
left ideal} by analogy with the augmentation ideal of the group ring
of $W$.  Our main result, Theorem \ref{theorem;d}, is that $K_G^*(X)$
is isomorphic to the subring of $K_T^*(X)$ annihilated by $I(\ca{D})$.
In other words, a $T$-equivariant class is $G$-equivariant if and only
if it is killed by the divided difference operators.  This result is
an application of the work of Demazure and of various structural
theorems regarding the rings $R(T)$ and $\ca{D}$ due to Pittie
\cite{pittie;homogeneous-vector}, Steinberg \cite{steinberg;pittie},
Kazhdan and Lusztig \cite{kazhdan-lusztig;deligne-langlands-hecke},
and Kostant and Kumar
\cite{kostant-kumar;equivariant-k-theory-flag;journal}.

It turns out (Theorem \ref{theorem;weyl}) that the divided difference
operator associated with the longest Weyl group element acts on
$K_T^*(X)$ as a projection onto the direct summand $K_G^*(X)$.  For a
space $X$ consisting of a single point, this statement is equivalent
to the Weyl character formula.  Thus our results can be viewed as
``lifting'' the Weyl character formula from a point to an arbitrary
$G$-space.

There is a natural condition under which $K_G^*(X)$ is isomorphic to
the Weyl invariant part of $K_T^*(X)$, namely that the Weyl
denominator, viewed as an element of the representation ring $R(T)$,
should not be a zero divisor in $K_T^*(X)$ (Theorem
\ref{theorem;abelian}).  This condition is satisfied for interesting
classes of spaces, such as nonsingular projective varieties on which
$G$ acts linearly (Corollary \ref{corollary;abelian-torsion}).

A by-product of our work is a form of duality.  There is a natural
bilinear pairing on the $K_G^*(X)$-module $K_T^*(X)$ defined by
pushing forward to $K_G^*(X)$ the product of two classes in
$K_T^*(X)$.  Proposition \ref{proposition;orthogonal} states that
$K_T^*(X)$ is isomorphic to its dual via this ``intersection''
pairing.

All these results generalize when we replace the maximal torus $T$ by
a closed connected subgroup of $G$ which contains $T$.  In this manner
we obtain a lift of the Gross-Kostant-Ramond-Sternberg character
formula to a topological $G$-space.  We hope to take this up in a
sequel to this paper.

The treatment of these problems in equivariant K-theory is parallel to
the case of Borel's equivariant cohomology theory $H_G^*(X)$, which
was covered in \cite{holm-sjamaar;torsion-abelianization}.  (For
instance, for reasonable coefficient rings we have $H_G^*(X)\cong
H_T^*(X)^W$ provided that the discriminant of $G$ is not a zero
divisor in $H_T^*(X)$.  Thus the Weyl denominator plays a r\^ole in
K-theory analogous to that of the discriminant in cohomology.)
However, there are several significant differences between the two
theories as well, and for this reason we have chosen to provide full
details.  One such difference is the fact that the restriction map
$H_G^*(X)\to H_T^*(X)$ becomes injective only after inverting the
torsion primes of $G$.  In K-theory these primes cause no trouble.

Our results have counterparts in the equivariant K-theory of algebraic
varieties, as follows from work of Merkurjev
\cite{merkurjev;comparison-equivariant-ordinary}, and in the
equivariant K-theory and KK-theory of nuclear $C^*$-algebras, as
follows from work of Rosenberg and Schochet
\cite{rosenberg-schochet;equivariant-k-theory}.  We present the case
of algebraic K-theory in Section \ref{section;scheme}.

We thank Samuel Evens and the referee for useful suggestions.

\section{Divided differences}\label{section;operator}

\subsection*{Notation}

Throughout this paper, with the exception of
Section~\ref{section;scheme}, $G$ denotes a compact connected Lie
group, $\X(G)=\Hom(G,\U(1))$ the character group of $G$, and $R(G)$
the Grothendieck ring of finite-dimensional complex $G$-modules.  We
choose once and for all a maximal torus $T$ of $G$.  Let $j\colon T\to
G$ be the inclusion map and $W=N_G(T)/T$ the Weyl group.  Then $R(T)$
is canonically isomorphic to the group ring $\Z[\X(T)]$ and the
restriction homomorphism $j^*\colon R(G)\to R(T)$ induces an
isomorphism $R(G)\cong R(T)^W$.  (See e.g.\
\cite[\S\,IX.3]{bourbaki;groupes-algebres}.)  Let $\ca{R}\subset\X(T)$
be the root system of $(G,T)$.  As in
\cite[\S\,VI.3]{bourbaki;groupes-algebres}, we denote by $e^\lambda$
the element of $R(T)$ defined by a character $\lambda\in\X(T)$.  We
fix a basis of the root system $\ca{R}$ and we let
$$\rho=\frac12\sum_{\alpha\in\ca{R}^+}\alpha\in\frac12\X(T)$$
be the half-sum of the positive roots.  

\subsection*{Holomorphic induction}

See \cite[\S\,4]{atiyah;bott-periodicity-index},
\cite[\S\,II.5]{atiyah-bott;lefschetz-fixed-elliptic-complex},
\cite{bott;homogeneous-differential} and
\cite{segal;equivariant-k-theory;;1968} for the material in this
subsection.  Let $G_\C$ be the complexification of $G$ and let $B^-$
be the Borel subgroup of $G_\C$ with Lie algebra
$$\lie{b}^-=\t_\C\oplus\bigoplus_{\alpha\in\ca{R}^+}\g_\C^{-\alpha},$$
where $\g_\C^\alpha$ denotes the root space of the root $\alpha$.  The
inclusion $G\to G_\C$ descends to a $G$-equivariant diffeomorphism
$G/T\cong G_\C/B^-$.

Let $V$ be a finite-dimensional complex $T$-module.  Extend $V$ to a
$B^-$-module by letting the nilradical of $B^-$ act trivially.  Let
$(\Lambda^{0,*}E_V,\bar{\partial}_V)$ be the Dolbeault complex on
$G_\C/B^-$ with coefficients in the homogeneous holomorphic vector
bundle $E_V=G_\C\times^{B^-}V$.  The equivariant index of
$\bar{\partial}_V$ is a virtual $G$-module and depends only on the
class of $V$ in $R(T)$.  The map $j_*\colon R(T)\to R(G)$ defined by
$[V]\mapsto\index(\bar{\partial}_V)$ is the \emph{pushforward
homomorphism} or \emph{holomorphic induction map}.  It satisfies
$j_*(1)=1$ and is $R(G)$-linear,
\begin{equation}\label{equation;linear}
j_*(j^*(v)u)=vj_*(u)
\end{equation}
for all $u\in R(T)$ and $v\in R(G)$.  It follows that $j_*j^*(v)=v$
for all $v\in R(G)$.  The Lefschetz fixed point formula gives an
expression for the $R(G)$-linear endomorphism $j^*j_*$ of $R(T)$,
namely
\begin{equation}\label{equation;push-pull-weyl}
j^*j_*(u)=\frac{J(u)}{\d}
\end{equation}
for all $u\in R(T)$.  Here $J$ is the $\Z$-linear endomorphism of
$R(T)$ defined by
\begin{equation}\label{equation;anti-invariant}
J(u)=\sum_{w\in W}\det(w)w\odot u,
\end{equation}
where $w\odot u=e^{-\rho}w(e^\rho u)$ is the $\rho$-shifted $W$-action
on $R(T)$, which is well-defined because $\rho-w(\rho)\in\X(T)$ for
all $w\in W$.  The denominator is
\begin{equation}\label{equation;denominator}
\d=J(1)=\sum_{w\in
W}\det(w)w\odot1=\prod_{\alpha\in\ca{R}^+}(1-e^{-\alpha}).
\end{equation}
By the Borel-Weil theorem, $j_*(e^\lambda)=[M_\lambda]$ for all
dominant $\lambda\in\X(T)$, where $M_\lambda$ is the irreducible
$G$-module with highest weight $\lambda$.  Therefore
\eqref{equation;push-pull-weyl} is equivalent to the Weyl character
formula.

\subsection*{Demazure's operators}

Demazure \cite{demazure;desingularisation,demazure;nouvelle-formule}
noticed that the endomorphism $j^*j_*$ of $R(T)$ can be factorized
into a composition of $N$ operators, where $N=\abs{\ca{R}^+}$.  These
operators are defined as follows.

Let $\alpha$ be a root and let $s_\alpha\in W$ be the reflection in
$\alpha$.  For every $\lambda\in\X(T)$, the element
$e^\lambda-e^{-\alpha}e^{s_\alpha(\lambda)}$ of $R(T)$ is uniquely
divisible by $1-e^{-\alpha}$, so we have a $\Z$-linear endomorphism
$\delta_\alpha$ of $R(T)$ defined by
\begin{equation}\label{equation;operator}
\delta_\alpha(u)=\frac{u-e^{-\alpha}s_\alpha(u)}{1-e^{-\alpha}}.
\end{equation}
This map, known in the recent literature as an \emph{isobaric divided
difference operator}, was introduced by Demazure in
\cite{demazure;nouvelle-formule} and denoted by $\Lambda^0_\alpha$
there.  He considered only the simply connected case, but
$\delta_\alpha$ is well-defined for any $G$.  The following properties
are easily deduced from the definition:
\begin{gather}
\label{equation;square}
\delta_\alpha(1)=1,\qquad\delta_\alpha^2=\delta_\alpha,
\\
\label{equation;reflect}
s_\alpha\delta_\alpha=\delta_\alpha,\qquad\delta_\alpha
s_\alpha=\delta_{-\alpha}=1+e^\alpha-e^\alpha\delta_\alpha,\qquad
w\delta_\alpha w^{-1}=\delta_{w(\alpha)},
\\
\label{equation;leibniz}
\delta_\alpha(u_1u_2)=\delta_\alpha(u_1)u_2
+s_\alpha(u_1)(\delta_\alpha(u_2)-u_2),
\end{gather}
for all $\alpha\in\ca{R}$, $w\in W$ and $u_1$, $u_2\in R(T)$.  In
addition, Demazure defined the operators (cf.\ also his earlier papers
\cite{demazure;invariants-symetriques-entiers,%
demazure;desingularisation})
\begin{equation}\label{equation;operator'}
\delta'_\alpha(u)=\frac{u-s_\alpha(u)}{1-e^{-\alpha}}.
\end{equation}
It is plain that
\begin{equation}\label{equation;two-operators}
\delta'_\alpha(1)=0,\qquad(\delta'_\alpha)^2=\delta'_\alpha,
\qquad\delta'_\alpha=e^\alpha(\delta_\alpha-1),
\end{equation}
where in the first equality $1$ denotes the identity element of $R(T)$
and in the third the identity automorphism of $R(T)$.  The product
rule is
\begin{gather}
\label{equation;product'}
\delta'_\alpha(u_1u_2)=\delta'_\alpha(u_1)u_2
+s_\alpha(u_1)\delta'_\alpha(u_2),\\
\label{equation;product}
\delta_\alpha(u_1u_2)=\delta_\alpha(u_1)u_2
+e^{-\alpha}s_\alpha(u_1)\delta'_\alpha(u_2),
\end{gather}
where the second line is just an alternative form of
\eqref{equation;leibniz}.

Demazure proved that the $\delta_\alpha$ satisfy braid relations and
deduced the following statement.  (His proof has an error, which was
corrected by Andersen \cite{andersen;schubert-demazure} and Ramanan
and Ramanathan \cite{ramanan-ramanathan;normality-flag}.)

\begin{theorem}[{\cite[Th\'eor\`eme~1]{demazure;nouvelle-formule}}]
\label{theorem;operators}
For every $w\in W$ and for every reduced expression
$w=s_{\beta_1}s_{\beta_2}\cdots s_{\beta_l}$ in terms of simple
reflections, the composition
$\delta_{\beta_1}\delta_{\beta_2}\cdots\delta_{\beta_l}$ takes the
same value $\partial_w$; and the composition
$\delta'_{\beta_1}\delta'_{\beta_2}\cdots\delta'_{\beta_l}$ takes the
same value $\partial'_w=e^\rho\partial_we^{-\rho}$.
\end{theorem}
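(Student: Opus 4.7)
The plan is to apply Matsumoto's lemma to reduce the well\-definedness of $\partial_w$ to the braid relations for the operators $\delta_\alpha$, verify those relations by reducing to rank two, and then deduce the primed statement by a conjugation argument.

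First, by Matsumoto's lemma any two reduced expressions for a given $w\in W$ are related by a sequence of braid moves. Consequently, to show that $\delta_{\beta_1}\delta_{\beta_2}\cdots\delta_{\beta_l}$ depends only on $w$, it suffices to verify the braid identities
\begin{equation*}
\underbrace{\delta_\alpha\delta_\beta\delta_\alpha\cdots}_{m_{\alpha\beta}\text{ factors}}
=\underbrace{\delta_\beta\delta_\alpha\delta_\beta\cdots}_{m_{\alpha\beta}\text{ factors}}
\end{equation*}
for every pair of simple roots $\alpha,\beta$, where $m_{\alpha\beta}$ is the order of $s_\alpha s_\beta$ in $W$.

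Second, only the rank-two root subsystem generated by $\alpha$ and $\beta$ enters this identity, so I would reduce to the four rank-two root systems $A_1\times A_1$, $A_2$, $B_2$, $G_2$, for which $m_{\alpha\beta}$ equals $2,3,4,6$ respectively. In each case the identity can be checked by computing the action of both composites on the monomial basis $\{e^\lambda\}$ of $R(T)$, using the definition \eqref{equation;operator}, the idempotency and reflection identities \eqref{equation;square}--\eqref{equation;reflect}, and the product rule \eqref{equation;product}. The case $m_{\alpha\beta}=2$ is immediate because $\alpha\perp\beta$ gives $s_\alpha(\beta)=\beta$, hence $\delta_\alpha$ and $\delta_\beta$ commute by \eqref{equation;reflect}.

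Third, for the primed version I would use $s_\alpha(\rho)=\rho-\alpha$ to compute, for every $u\in R(T)$,
\begin{equation*}
\delta'_\alpha(e^\rho u)=\frac{e^\rho u-e^{\rho-\alpha}s_\alpha(u)}{1-e^{-\alpha}}=e^\rho\delta_\alpha(u),
\end{equation*}
so that $\delta'_\alpha=e^\rho\delta_\alpha e^{-\rho}$ as $\Z$-linear endomorphisms of $R(T)$. Composing along any reduced expression yields $\delta'_{\beta_1}\cdots\delta'_{\beta_l}=e^\rho\partial_we^{-\rho}$, which by the first two steps is independent of the chosen reduction.

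The hard part is the rank-two braid relation for $m_{\alpha\beta}=6$ (type $G_2$): this is precisely the case in which Demazure's original argument had a gap, subsequently filled by Andersen and Ramanan--Ramanathan. A robust way to carry out this calculation is to write each iterated composite in the form $\sum_{v\le w}a_v\cdot v$, with coefficients $a_v$ in the fraction field of $R(T)$ determined recursively by \eqref{equation;operator}, and then compare the two sides term by term along the Bruhat order on the rank-two Weyl group.
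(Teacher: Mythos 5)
The paper does not prove this statement; it cites Demazure's Th\'eor\`eme~1 directly, with a pointer to the corrections of Andersen and Ramanan--Ramanathan, so there is no in-paper argument to compare your proposal against. Your sketch --- Matsumoto's lemma to reduce well-definedness to braid relations, rank-two verification of the braid relations, and conjugation by $e^\rho$ for the primed operators --- is the standard algebraic route, and the conjugation step is carried out correctly: indeed $s_\alpha(\rho)=\rho-\alpha$ gives $\delta'_\alpha = e^\rho\delta_\alpha e^{-\rho}$, and composing along any reduced word then yields $\partial'_w = e^\rho\partial_w e^{-\rho}$ once well-definedness of $\partial_w$ is known.

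Two points deserve tightening. First, the $m_{\alpha\beta}=2$ case is not quite immediate from \eqref{equation;reflect} alone: that identity gives $s_\beta\delta_\alpha s_\beta=\delta_\alpha$ (since $s_\beta(\alpha)=\alpha$), which is the commutation of $\delta_\alpha$ with the reflection $s_\beta$, not with $\delta_\beta$. To conclude $\delta_\alpha\delta_\beta=\delta_\beta\delta_\alpha$ you should expand both operators in the twisted group algebra over the fraction field of $R(T)$, writing $\delta_\alpha=\frac{1}{1-e^{-\alpha}}-\frac{e^{-\alpha}}{1-e^{-\alpha}}s_\alpha$, and use that $s_\alpha$ fixes the coefficients of $\delta_\beta$ and vice versa when $\alpha\perp\beta$; the identity then reduces to $s_\alpha s_\beta=s_\beta s_\alpha$. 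Second, the historical remark is misplaced: the error in Demazure's argument that Andersen and Ramanan--Ramanathan repaired concerned the geometric character formula (normality/rational singularities of Schubert varieties used in the Bott--Samelson argument, cf.\ Theorem~\ref{theorem;demazure}), not a failure of the algebraic $G_2$ braid relation. The rank-two braid relations for the $\delta_\alpha$ are a purely formal computation in the twisted group algebra and were not themselves the source of the gap. With that said, you do leave the $B_2$ and $G_2$ computations as a plan rather than carrying them out; for a complete proof these must actually be executed, which is tedious but unproblematic once the operators are written in the twisted group algebra form you describe.
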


(A comment on the last identity: if $\rho\not\in\X(T)$, then
multiplication by $e^{\pm\rho}$ does not preserve $\Z[\X(T)]$, but the
operator $e^\rho\partial_we^{-\rho}$ does preserve $\Z[\X(T)]$ and is
equal to $\partial'_w$.)

It follows from \eqref{equation;product'} and \eqref{equation;product}
that the endomorphisms $\partial_w$ and $\partial'_w$ of $R(T)$ are
$R(T)^W$-linear.  Let $w_0$ be the longest Weyl group element.  The
most important property of Demazure's operators is the following
formula, which relates the ``top'' operator $\partial_{w_0}$ to the
Weyl character formula,
$$\partial_{w_0}(u)=J(u)/\d$$
for all $u\in R(T)$ (\cite[Proposition~3]{demazure;nouvelle-formule}).
Because of \eqref{equation;push-pull-weyl} this is equivalent to
\begin{equation}\label{equation;long-symmetric}
\partial_{w_0}=j^*j_*.
\end{equation}
Demazure has given a similar characterization of the operators
$\partial_w$ for all $w\in W$; cf.\ Theorem \ref{theorem;demazure}.

\section{The Hecke algebra $\ca{D}$}\label{section;algebra}
 
This section is in part a review of the work of Kazhdan and Lusztig
\cite{kazhdan-lusztig;deligne-langlands-hecke} and Kostant and Kumar
\cite{kostant-kumar;equivariant-k-theory-flag;journal} on the
representation ring $R(G)$ and the associated Hecke algebra $\ca{D}$.
All results stated here follow readily from their work.  As far as we
are aware, the main novelty is the introduction of the augmentation
left ideal of $\ca{D}$, which will play an important role in Section
\ref{section;push-pull}.  The notation is as in Section
\ref{section;operator}.

\subsection*{The augmentation left ideal}

Let $\ca{E}=\End_{R(G)}(R(T))$ be the $R(G)$-algebra of $R(G)$-linear
endomorphisms of $R(T)$.  Let $\ca{D}$ be the subalgebra of $\ca{E}$
generated by the $\delta_\alpha$ and the elements of $R(T)$ (regarded
as multiplication operators).  It follows from
\eqref{equation;operator} that $\ca{D}$ contains the group ring
$\Z[W]$ (viewed as an algebra of endomorphisms of $R(T)$).  It follows
from Theorem \ref{theorem;operators} that $\partial_w$,
$\partial'_w\in\ca{D}$ for all $w$.  We define the \emph{augmentation
left ideal} of $\ca{D}$ to be the annihilator of the identity element
$1\in R(T)$,
$$
I(\ca{D})=\{\,\Delta\in\ca{D}\mid\Delta(1)=0\,\}.
$$
Note that $I(\ca{D})$ contains the augmentation ideal $I(W)$ of
$\Z[W]$ (whence its name), as well as the operators $\delta'_\alpha$
for all $\alpha\in\ca{R}$ and $\partial'_w$ for all $w\ne1$.

The next result follows immediately from work of Kostant and Kumar.
It says that $\ca{D}$ and $I(\ca{D})$ are free left $R(T)$-modules,
and that as a ring $\ca{D}$ is isomorphic to the Hecke algebra (cf.\
\cite[\S\,2.12]{kazhdan-lusztig;deligne-langlands-hecke}) over $\Z$ of
the extended affine Weyl group $\X(T)\rtimes W$.

\begin{theorem}\label{theorem;kostant-kumar}
\begin{enumerate}
\item\label{item;kostant-kumar}
$(\partial_w)_{w\in W}$ is a basis of the left $R(T)$-module $\ca{D}$.
\item\label{item;kostant-kumar'}
$(\partial'_w)_{w\in W}$ is a basis of the left $R(T)$-module
$\ca{D}$.
\item\label{item;kostant-kumar-ideal}
$(\partial'_w)_{w\ne1}$ is a basis of the left $R(T)$-module
$I(\ca{D})$.
\item\label{item;hecke}
Let $S$ be the set of simple reflections.  The multiplication law of
$\ca{D}$ is determined by that of $R(T)$ and by the rules
$$
[\partial'_s,u]=\partial'_s(u)s,\qquad\partial'_s\partial'_w=
\begin{cases}
\partial'_{sw}&\text{if $l(sw)=l(w)+1$}\\
\partial'_w&\text{if $l(sw)=l(w)-1$}
\end{cases}
$$
for all $s\in S$, $w\in W$ and $u\in R(T)$.
\end{enumerate}
\end{theorem}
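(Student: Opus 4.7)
The plan is to prove the four parts in the order (iv), (ii), (i), (iii), following the approach of Kostant and Kumar~\cite{kostant-kumar;equivariant-k-theory-flag;journal}. Part (iv) is a direct calculation. For the commutator, applying the Leibniz rule \eqref{equation;product'} to $(\partial'_s\cdot u)(v)=\partial'_s(uv)$ and using $(1-e^{-\alpha})\partial'_s(v)=v-s(v)$ gives $[\partial'_s,u](v)=\partial'_s(u)\,s(v)$, i.e.\ $[\partial'_s,u]=\partial'_s(u)\,s$. The rule for $\partial'_s\partial'_w$ is immediate from Theorem~\ref{theorem;operators}: when $l(sw)=l(w)+1$, concatenating $s$ with a reduced expression for $w$ is itself reduced, so $\partial'_s\partial'_w=\partial'_{sw}$; when $l(sw)=l(w)-1$, write $w=sw'$ reduced and use $(\delta'_s)^2=\delta'_s$ from \eqref{equation;two-operators}.

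For (ii), I pass to the fraction field $Q$ of $R(T)$. Formula \eqref{equation;operator'} expresses $\delta'_\alpha=(1-s_\alpha)/(1-e^{-\alpha})$ as operators, so $\delta'_\alpha\in Q[W]$ (the twisted group algebra, with $W$ acting on $Q$); conversely $s_\alpha=1-(1-e^{-\alpha})\delta'_\alpha$, so after localization $\ca{D}_Q=Q[W]$, which has $Q$-dimension $|W|$. Expanding $\partial'_w$ along a reduced decomposition and tracking the leading element of $W$ shows that the change-of-basis matrix between $\{\partial'_w\}$ and the $Q$-basis $\{w\}$ of $Q[W]$ is unit-triangular in Bruhat order, so $\{\partial'_w\}$ is also a $Q$-basis and, in particular, $R(T)$-linearly independent. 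For spanning over $R(T)$, the commutator relation in (iv) lets me move every $R(T)$-scalar past the generators $\delta'_s$ in any word, reducing any element of $\ca{D}$ to the normal form $\sum_{w\in W}a_w\partial'_w$.

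Part (i) then follows from another triangularity argument. The identity $\delta_\alpha=1+e^{-\alpha}\delta'_\alpha$ from \eqref{equation;two-operators}, combined with an induction on $l(w)$ and the Hecke relations in (iv), expresses each $\partial_w$ as an $R(T)$-linear combination of $\{\partial'_v\}_{v\le w}$ whose leading coefficient is a unit of $R(T)$; this unit-triangular change of basis promotes (ii) to (i). Part (iii) is then immediate: an induction on length using $\delta'_\alpha(1)=0$ gives $\partial'_w(1)=0$ for all $w\ne 1$, while $\partial'_1=\id$, so any $\Delta=\sum_{w}a_w\partial'_w$ satisfies $\Delta(1)=a_1$, and $\Delta\in I(\ca{D})$ if and only if $a_1=0$.

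The step I expect to require the most care is the triangularity claim used for both (ii) and (i): verifying, by induction using the Hecke relations and the subword property of Bruhat order, that when $\partial'_w$ (respectively $\partial_w$) is expanded along a reduced decomposition, the coefficient of the leading Bruhat term is a unit in $R(T)$. Once this bookkeeping is in hand, the remaining arguments are routine.
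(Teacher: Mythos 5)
Your proof is correct, and it takes a genuinely different route from the paper's for parts~(i) and~(ii). The paper proves~(i) by quoting a theorem of Kostant and Kumar (their Theorem~2.9): it embeds $\ca{D}$ into the subalgebra $\ca{A}$ of the twisted group algebra $K[W]$ stabilizing $R(T)$, invokes the cited result that $\ca{A}$ is free with basis $(\partial_w)_w$, and concludes $\ca{A}=\ca{D}$; it then deduces~(ii) from~(i) by noting via \eqref{equation;two-operators} and \eqref{equation;leibniz} that the $\partial'_w$ and $\partial_w$ span the same $R(T)$-submodule, and~(iii) and~(iv) follow as in your write-up. You instead reprove the Kostant--Kumar freeness from scratch: passing to the fraction field $Q$ and observing $\ca{D}_Q=Q[W]$, then using the Bruhat-triangularity of the expansion of $\partial'_w$ in the $W$-basis to get $Q$-linear independence, and the commutator rule from~(iv) to reduce every element of $\ca{D}$ to normal form $\sum_w a_w\partial'_w$, thus proving~(ii) directly; (i) then follows from~(ii) by a second unit-triangularity argument using $\delta_\alpha=1+e^{-\alpha}\delta'_\alpha$. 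Your route is more self-contained, at the cost of carrying out the triangularity bookkeeping that the paper sidesteps by citation; conversely, the paper's deduction of~(ii) from~(i) is quicker once the Kostant--Kumar theorem is granted, since one only needs that the $(\partial'_w)$ span the known free module of rank $\abs{W}$. Both proofs of (iii) and (iv) are essentially identical. One small remark: what you call ``unit-triangular'' is really ``triangular with unit diagonal entries'' (the diagonal coefficient of $\partial'_w$ against $w$ in $Q[W]$ is a product of factors $-1/(1-e^{-\gamma})$, a unit of $Q$ but not $1$), but this is exactly what the argument needs.
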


\begin{proof}
Let $K$ be the fraction field of $R(T)$.  Then $K^W$ is the fraction
field of $R(T)^W\cong R(G)$.  Let $\ca{A}$ be the subalgebra of the
twisted group algebra $K[W]$ which stabilizes $R(T)$.  Then
$\delta_\alpha\in\ca{A}$ for all $\alpha\in\ca{R}$, so $\ca{D}$ is a
subalgebra of $\ca{A}$.  By
\cite[Theorem~2.9]{kostant-kumar;equivariant-k-theory-flag;journal}
(where $K[W]$ is denoted by $Q_W$ and $\ca{A}$ by $Y$), $\ca{A}$ is a
free left $R(T)$-module with basis $(\partial_w)_{w\in W}$.  This
shows that $\ca{A}=\ca{D}$, and hence \eqref{item;kostant-kumar}.  It
follows from the Leibniz rule \eqref{equation;leibniz} and from
\eqref{equation;two-operators} that the operators $\partial'_w$
generate the same left $R(T)$-submodule of $\ca{E}$ as the operators
$\partial_w$.  Therefore $(\partial'_w)_{w\in W}$ is also a basis of
$\ca{D}$.  By virtue of the fact that $\partial'_1$ is the identity
operator and that $\partial'_w(1)=0$ for $w\ne1$, this implies that
$I(\ca{D})$ is the submodule spanned by the tuple
$(\partial'_w)_{w\ne1}$.  The commutation rule
$[\partial'_s,u]=\partial'_s(u)s$ is equivalent to the product rule
\eqref{equation;product'}.  That
$\partial'_s\partial'_w=\partial'_{sw}$ if $l(sw)=l(w)+1$ follows from
the definition of the $\partial'_w$.  If $l(sw)=l(w)-1$, there is a
reduced expression $w=ss_1\cdots s_k$ for $w$, so
$$
\partial'_s\partial'_w=\partial'_s\partial'_s\partial'_{s_1}\cdots
\partial'_{s_k}=\partial'_s\partial'_{s_1}\cdots\partial'_{s_k}
=\partial'_w
$$
by Theorem \ref{theorem;operators} and \eqref{equation;two-operators}.
Since $R(T)$ and the $\partial'_s$ generate $\ca{D}$, these rules
determine the algebra structure of $\ca{D}$.
\end{proof}

(It follows from
\cite{kostant-kumar;equivariant-k-theory-flag;journal} that
\eqref{item;kostant-kumar} and \eqref{item;kostant-kumar'} are true
also for the \emph{right} $R(T)$-module structure on $\ca{D}$.  The
left ideal $I(\ca{D})$, on the other hand, is not a right
$R(T)$-module.)

\subsection*{The $R(G)$-bilinear form}

Let $\ca{D}_0$ be the subalgebra of $\ca{E}=\End_{R(G)}(R(T))$
generated by the endomorphism $\partial_{w_0}$ and by the elements of
$R(T)$.  Then we have inclusions
$$\ca{D}_0\subset\ca{D}\subset\ca{E}.$$
Now assume that $\pi_1(G)$ is \emph{torsion-free}, i.e.\ that the
derived subgroup of $G$ is simply connected.  We claim that
$\ca{D}_0=\ca{D}=\ca{E}$ in that case.  The proof is based on the
symmetric pairing $\ca{P}\colon R(T)\times R(T)\to R(G)$ defined by
\begin{equation}\label{equation;pairing}
\ca{P}(u_1,u_2)=j_*(u_1u_2)
\end{equation}
for $a_1$, $a_2\in R(T)$, where $j_*$ is the holomorphic induction map
of Section \ref{section;operator}.  It follows from
\eqref{equation;linear} that this pairing is $R(G)$-bilinear.  Pittie
\cite{pittie;homogeneous-vector} and Steinberg \cite{steinberg;pittie}
showed that the $R(G)$-module $R(T)$ is free of rank $\abs{W}$.
Kazhdan and Lusztig
\cite[Proposition~1.6]{kazhdan-lusztig;deligne-langlands-hecke} showed
that the pairing $\ca{P}$ is nonsingular in the sense that the induced
map
$$\ca{P}^\sharp\colon R(T)\to R(T)^\vee$$
is an isomorphism, where $R(T)^\vee=\Hom_{R(G)}(R(T),R(G))$ denotes
the $R(G)$-dual of the module $R(T)$.  (See also Panin
\cite[Theorem~8.1]{panin;algebraic-k-theory-twisted-flag-varieties},
who noted that this fact is implicit in an older result of Hulsurkar
\cite{hulsurkar;verma-conjecture-weyl}; cf.\ also Merkurjev
\cite[Proposition~2.17]{merkurjev;comparison-equivariant-ordinary}.)
We regard $R(T)^\vee$ as an $R(T)$-module with scalar multiplication
given by $(u_1\cdot\phi)(u_2)=\phi(u_1u_2)$ for $u_1$, $u_2\in R(T)$
and $\phi\in R(T)^\vee$.  Similarly, we view $\ca{E}$ as an
$R(T)\otimes_{R(G)}R(T)$-module with scalar multiplication given by
$$(u_1\otimes u_2)\cdot\Delta(u)=u_1\Delta(u_2u)$$
for $u_1$, $u_2$, $u\in R(T)$ and $\Delta\in\ca{E}$.

\begin{proposition}\label{proposition;endomorphism}
Assume that $\pi_1(G)$ is torsion-free.
\begin{enumerate}
\item\label{item;dual}
$R(T)^\vee$ is a free $R(T)$-module of rank $1$ generated by the
pushforward map $j_*$.
\item\label{item;endomorphism}
As an $R(G)$-algebra, $\ca{E}$ is isomorphic to the matrix algebra
$\ca{M}_{\abs{W}}(R(G))$.  As an $R(T)\otimes_{R(G)}R(T)$-module,
$\ca{E}$ is free of rank $1$ on the generator $\partial_{w_0}$.
\item\label{item;expansion}
Let $(u_w)_{w\in W}$ be a basis of the $R(G)$-module $R(T)$.  For
every $\Delta\in\ca{E}$ there exist unique $b_{w,w'}\in R(G)$ such
that
$$\Delta(u)=\sum_{w,w'\in W}j^*(b_{w,w'})u_w\partial_{w_0}(u_{w'}u)$$
for all $u\in R(T)$.
\item\label{item;algebras}
We have $\ca{D}_0=\ca{D}=\ca{E}$.
\end{enumerate}
\end{proposition}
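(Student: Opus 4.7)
The plan is to reduce everything to the two external inputs already recorded above: the Pittie--Steinberg theorem that $R(T)$ is a free $R(G)$-module of rank $\abs{W}$, and the Kazhdan--Lusztig theorem that $\ca{P}^\sharp\colon R(T)\to R(T)^\vee$ is an isomorphism of $R(T)$-modules. Since $\ca{P}$ is $R(G)$-bilinear and $\ca{P}(1,u)=j_*(u)$, we have $\ca{P}^\sharp(1)=j_*$; because $R(T)$ is free of rank $1$ on the generator $1$, part (i) is an immediate consequence of the Kazhdan--Lusztig isomorphism.

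For (ii), the matrix algebra statement is immediate from Pittie--Steinberg, since $\End_{R(G)}(F)\cong\ca{M}_n(R(G))$ for any free $R(G)$-module $F$ of rank $n$. For the ``free of rank $1$'' statement I would invoke the standard evaluation map
$$R(T)\otimes_{R(G)}R(T)^\vee\longto\ca{E},\qquad u\otimes\phi\longmapsto\bigl(v\mapsto u\phi(v)\bigr),$$
which is an isomorphism because $R(T)$ is finitely generated projective over $R(G)$. Transporting the $R(T)$-structure from $R(T)^\vee$ to $R(T)$ via $\ca{P}^\sharp$ (and using commutativity of $R(T)$) converts the prescribed $R(T)\otimes_{R(G)}R(T)$-action on $\ca{E}$ into the tautological one on the source, yielding an isomorphism $R(T)\otimes_{R(G)}R(T)\longiso\ca{E}$, $u_1\otimes u_2\longmapsto\bigl(v\mapsto u_1j_*(u_2v)\bigr)$. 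Finally the image of $1\otimes1$ is the endomorphism $v\mapsto j_*(v)$, which viewed inside $R(T)$ equals $j^*j_*(v)=\partial_{w_0}(v)$ by \eqref{equation;long-symmetric}; so the module is freely generated by $\partial_{w_0}$.

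Part (iii) is then a direct transcription of (ii): given an $R(G)$-basis $(u_w)_{w\in W}$ of $R(T)$, the family $(u_w\otimes u_{w'})_{w,w'\in W}$ is an $R(G)$-basis of $R(T)\otimes_{R(G)}R(T)$, hence each $\Delta\in\ca{E}$ has a unique expansion $\Delta=\sum_{w,w'}b_{w,w'}(u_w\otimes u_{w'})\cdot\partial_{w_0}$ with $b_{w,w'}\in R(G)$; evaluating at $u$ and remembering that the $R(G)$-action on $R(T)$ goes through $j^*$ gives the asserted formula.

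For (iv), the chain $\ca{D}_0\subset\ca{D}\subset\ca{E}$ is already in the text, so it suffices to show $\ca{E}\subset\ca{D}_0$. The point I would exploit is that $(u_1\otimes u_2)\cdot\partial_{w_0}$ is literally the triple composition $L_{u_1}\circ\partial_{w_0}\circ L_{u_2}$, where $L_u$ is multiplication by $u\in R(T)$; this composition manifestly lies in the subalgebra $\ca{D}_0$ generated by $R(T)$ and $\partial_{w_0}$. Combined with the expansion in (iii) this forces $\ca{E}=\ca{D}_0$. The only step I expect to require any care is the identification of $\partial_{w_0}$ with the image of $1\otimes1$ under the Morita-style isomorphism, which is precisely where the Lefschetz formula \eqref{equation;long-symmetric} does the real work; once that is in hand the remainder is formal manipulation on top of Pittie--Steinberg and Kazhdan--Lusztig.
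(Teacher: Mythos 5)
Your proof is correct and follows essentially the same route as the paper's: reduce to the Pittie--Steinberg theorem (so $R(T)$ is $R(G)$-free of rank $\abs{W}$) and the Kazhdan--Lusztig nondegeneracy of $\ca{P}$, then identify the module generator of $\ca{E}$ as $\partial_{w_0}$ using $\ca{P}^\sharp(1)=j_*$ and \eqref{equation;long-symmetric}. If anything, you are slightly more careful in (ii): the evaluation isomorphism you write, $u\otimes\phi\mapsto\bigl(v\mapsto u\,j^*\phi(v)\bigr)$, is the correct tensor--hom map, and composing it with $1\otimes\ca{P}^\sharp$ visibly intertwines the tautological $R(T)\otimes_{R(G)}R(T)$-action with the prescribed one on $\ca{E}$, giving the orbit map $u_1\otimes u_2\mapsto\bigl(v\mapsto u_1\partial_{w_0}(u_2v)\bigr)$. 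The paper's formula $g(\phi\otimes u_1)(u_2)=j^*\phi(u_1u_2)$, read literally, has image inside $j^*(R(G))$ and so cannot be the tensor--hom isomorphism; the map you use is evidently the intended one, and the rest of the argument is the same. Your remark in (iv) that $(u_1\otimes u_2)\cdot\partial_{w_0}$ is the composition $L_{u_1}\circ\partial_{w_0}\circ L_{u_2}$ usefully spells out why the expansion in (iii) forces $\ca{E}\subset\ca{D}_0$, a step the paper leaves implicit.
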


\begin{proof}
Observe first that $j_*\in R(T)^\vee$ because $j_*$ is $R(G)$-linear.
\eqref{item;dual} now follows immediately from the fact that
$\ca{P}^\sharp(1)=j_*$ and that $\ca{P}$ is nonsingular.  The first
observation in \eqref{item;endomorphism} is obvious from the fact that
$R(T)$ is free of rank $\abs{W}$.  Let $f$ be the composition of the
natural $R(T)\otimes_{R(G)}R(T)$-linear maps
$$
\xymatrix{
R(T)\otimes_{R(G)}R(T)\ar[r]^-{\ca{P}^\sharp\otimes1}&
R(T)^\vee\otimes_{R(G)}R(T)\ar[r]^-g&{\ca{E}},
}
$$
where $g$ is defined by $g(\phi\otimes u_1)(u_2)=j^*\phi(u_1u_2)$.
Then $g$ is an isomorphism because $R(T)$ is free and
$\ca{P}^\sharp\otimes1$ is an isomorphism because $\ca{P}$ is
nonsingular.  Moreover, $f(1\otimes1)=j^*j_*$, which is equal to
$\partial_{w_0}$ by \eqref{equation;long-symmetric}.  This proves the
second statement in \eqref{item;endomorphism}.  The $\abs{W}^2$-tuple
$(u_w\otimes u_{w'})_{w,w'\in W}$ is an $R(G)$-basis of
$R(T)\otimes_{R(G)}R(T)$, so \eqref{item;expansion} is a restatement
of \eqref{item;endomorphism}.  It follows from \eqref{item;expansion}
that $\ca{E}\subset\ca{D}_0$, which proves~\eqref{item;algebras}.
\end{proof}

For each character $\lambda\in\X(T)$, define $j_\lambda=e^\lambda\cdot
j_*\in R(T)^\vee$, i.e.\ $j_\lambda(u)=j_*(e^\lambda u)$ for all $u\in
R(T)$.  We call $j_\lambda$ the \emph{twisted} induction map with
coefficients in the one-dimensional $T$-module defined by $\lambda$.

\begin{corollary}\label{corollary;induct}
Assume that $\pi_1(G)$ is torsion-free.  For each $\lambda\in\X(T)$,
the twisted induction map $j_\lambda$ is a generator of the
$R(T)$-module $R(T)^\vee$.  The collection
$(j_\lambda)_{\lambda\in\X(T)}$ is a basis of the abelian group
$R(T)^\vee$.
\end{corollary}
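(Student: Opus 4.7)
The plan is to deduce both assertions as immediate consequences of Proposition \ref{proposition;endomorphism}\eqref{item;dual}, which says that under the torsion-free hypothesis on $\pi_1(G)$ the $R(T)$-module $R(T)^\vee$ is free of rank one on the generator $j_*$. Once that is in hand the rest is essentially bookkeeping about the representation ring.

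First I would observe that for each $\lambda\in\X(T)$ the element $e^\lambda\in R(T)$ is a unit, with inverse $e^{-\lambda}$, since $\X(T)$ is a group. Therefore multiplication by $e^\lambda$ is an $R(T)$-module automorphism of $R(T)^\vee$. Applying this automorphism to the distinguished generator $j_*$ produces $j_\lambda=e^\lambda\cdot j_*$, which is consequently also a generator of the rank-one free $R(T)$-module $R(T)^\vee$. This gives the first statement.

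For the second statement I would use the canonical identification $R(T)=\Z[\X(T)]$, under which $(e^\lambda)_{\lambda\in\X(T)}$ is a $\Z$-basis of $R(T)$. The $R(T)$-linear isomorphism $R(T)\longiso R(T)^\vee$ sending $u\mapsto u\cdot j_*$ (coming from Proposition \ref{proposition;endomorphism}\eqref{item;dual}) is in particular a $\Z$-linear isomorphism; applied to the $\Z$-basis $(e^\lambda)_\lambda$ it yields the $\Z$-basis $(e^\lambda\cdot j_*)_\lambda=(j_\lambda)_\lambda$ of $R(T)^\vee$, which is the required conclusion.

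There is really no obstacle here beyond invoking the previous proposition correctly; the only subtle point worth flagging is to check that $e^\lambda$ is genuinely invertible in $R(T)$ (so that the first claim holds without ambiguity) and that the functor $(-)^\vee=\Hom_{R(G)}(-,R(G))$ is being used with its stated $R(T)$-module structure $(u_1\cdot\phi)(u_2)=\phi(u_1u_2)$, so that $e^\lambda\cdot j_*$ really is the map $u\mapsto j_*(e^\lambda u)$ defined in the statement. With those two identifications the corollary is immediate.
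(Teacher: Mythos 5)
Your proof is correct and takes precisely the same approach as the paper, deducing both assertions from Proposition \ref{proposition;endomorphism}\eqref{item;dual} together with the facts that $e^\lambda$ is a unit in $R(T)$ and that the $e^\lambda$ form a $\Z$-basis of $R(T)$. You have merely spelled out the bookkeeping that the paper leaves implicit.
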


\begin{proof}
The first assertion follows from Proposition
\ref{proposition;endomorphism}\eqref{item;dual} and the fact that
$e^\lambda$ is a unit in $R(T)$.  The second assertion follows from
Proposition \ref{proposition;endomorphism}\eqref{item;dual} and the
fact that the $e^\lambda$ form a basis of $R(T)$.
\end{proof}

\subsection*{Behaviour under covering maps}

Let $\tilde{G}$ be a second compact connected Lie group and let
$\phi\colon\tilde{G}\to G$ be a covering homomorphism.  Let
$\tilde{T}$ be the maximal torus $\phi^{-1}(T)$ of $\tilde{G}$.  The
pullback map $\phi^*\colon\ca{X}(T)\to\ca{X}(\tilde{T})$ is injective.
It induces an injective homomorphism
$$\phi^*\colon R(T)\to R(\tilde{T}),$$
and it maps the root system of $G$ bijectively to that of $\tilde{G}$.
We shall identify the two root systems via this bijection.  For every
root $\alpha$, the endomorphism $\delta_\alpha$ extends to an
$R(\tilde{G})$-linear endomorphism $\tilde{\delta}_\alpha$ of
$R(\tilde{T})$ given by the same formula as \eqref{equation;operator}.
Similarly, $\delta'_\alpha$ extends to an $R(\tilde{G})$-linear
endomorphism $\tilde{\delta}'_\alpha$.  Also, multiplication by an
element $u\in R(T)$ extends in an obvious way to a multiplication
operator on $R(\tilde{T})$.  Thus we have defined an algebra
homomorphism $\bar{\phi}\colon\ca{D}\to\tilde{\ca{D}}$, where
$\tilde{\ca{D}}$ is the algebra of endomorphisms of $R(\tilde{T})$
generated by the $\tilde{\delta}_\alpha$ and by $R(\tilde{T})$.
Observe that $\bar{\phi}$ maps $I(\ca{D})$ to $I(\tilde{\ca{D}})$.

\begin{lemma}\label{lemma;covering}
The homomorphism $\bar{\phi}\colon\ca{D}\to\tilde{\ca{D}}$ is
injective and induces isomorphisms of left $R(\tilde{T})$-modules
$$
1\otimes\bar{\phi}\colon
R(\tilde{T})\otimes_{R(T)}\ca{D}\to\tilde{\ca{D}},\qquad
1\otimes\bar{\phi}\colon R(\tilde{T})\otimes_{R(T)}I(\ca{D})\to
I(\tilde{\ca{D}}).
$$
\end{lemma}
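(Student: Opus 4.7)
The plan is to reduce both isomorphism claims, and the injectivity of $\bar\phi$, to Theorem \ref{theorem;kostant-kumar}, which supplies $\ca D$ and $I(\ca D)$ with explicit $R(T)$-bases and, applied to $\tilde G$, does the same for $\tilde{\ca D}$ and $I(\tilde{\ca D})$. The first thing I would do is check the compatibility $\bar\phi(\partial_w) = \tilde\partial_w$ and $\bar\phi(\partial'_w) = \tilde\partial'_w$ for every $w \in W$. Since $\phi^*$ identifies the two root systems and the formulas \eqref{equation;operator} and \eqref{equation;operator'} defining $\delta_\alpha$ and $\delta'_\alpha$ are literally the formulas used to define $\tilde\delta_\alpha$ and $\tilde\delta'_\alpha$, this is immediate from Theorem \ref{theorem;operators} by composing along any reduced expression for $w$.

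Next I would observe that freeness of $\ca D$ over $R(T)$ on $(\partial_w)_{w \in W}$ (Theorem \ref{theorem;kostant-kumar}\eqref{item;kostant-kumar}) makes $R(\tilde T) \otimes_{R(T)} \ca D$ free over $R(\tilde T)$ with basis $(1 \otimes \partial_w)_{w \in W}$, and the map $1 \otimes \bar\phi$ carries this tuple onto $(\tilde\partial_w)_{w \in W}$, which is itself an $R(\tilde T)$-basis of $\tilde{\ca D}$ by the same theorem applied to $\tilde G$. Hence $1 \otimes \bar\phi$ is an $R(\tilde T)$-linear isomorphism. Injectivity of $\bar\phi$ itself then follows by factoring it as the canonical map $\ca D \to R(\tilde T) \otimes_{R(T)} \ca D$ followed by the isomorphism just established; the canonical map is injective because the injectivity of $\phi^*\colon \X(T) \to \X(\tilde T)$ extends to group rings so $R(T) \hookrightarrow R(\tilde T)$, and $\ca D$ is $R(T)$-free hence $R(T)$-flat.

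For the second isomorphism I would run the identical argument with Theorem \ref{theorem;kostant-kumar}\eqref{item;kostant-kumar-ideal} in place of \eqref{item;kostant-kumar}: the basis $(\partial'_w)_{w \ne 1}$ of $I(\ca D)$ tensors up to an $R(\tilde T)$-basis of $R(\tilde T) \otimes_{R(T)} I(\ca D)$, which $1 \otimes \bar\phi$ sends bijectively onto the basis $(\tilde\partial'_w)_{w \ne 1}$ of $I(\tilde{\ca D})$ guaranteed by the same theorem applied to $\tilde G$.

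The whole statement is essentially a formal consequence of the Kostant--Kumar basis theorem, so I do not expect a serious obstacle. The one point that warrants care is the initial bookkeeping verification that $\bar\phi$ transports the chosen bases correctly; this rests on Theorem \ref{theorem;operators} (so that $\partial_w$ is well-defined independently of the reduced expression chosen) together with the compatibility of $\phi^*$ with the root-system identification, but once recorded it drives the rest of the argument mechanically.
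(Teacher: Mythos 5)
Your proposal is correct and follows the same route as the paper: verify $\bar\phi(\partial_w)=\tilde\partial_w$ and $\bar\phi(\partial'_w)=\tilde\partial'_w$ via Theorem \ref{theorem;operators}, then transport the Kostant--Kumar $R(T)$-bases of $\ca D$ and $I(\ca D)$ to the corresponding $R(\tilde T)$-bases of $\tilde{\ca D}$ and $I(\tilde{\ca D})$ from Theorem \ref{theorem;kostant-kumar}. The extra details you supply (factoring $\bar\phi$ through the canonical map and using $R(T)$-flatness of $\ca D$ for injectivity) are exactly what the paper's terse ``so the statement follows'' leaves implicit.
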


\begin{proof}
It follows from Theorem \ref{theorem;operators} that
$\bar{\phi}(\partial_w)=\tilde{\partial}_w$ and
$\bar{\phi}(\partial'_w)=\tilde{\partial}'_w$, where
$\tilde{\partial}_w$ (resp.\ $\tilde{\partial}'_w$) is the operator on
$R(\tilde{T})$ analogous to $\partial_w$ (resp.\ $\partial'_w$), so
the statement follows from Theorem \ref{theorem;kostant-kumar}.
\end{proof}

\section{$\ca{D}$-modules}\label{section;module}

We shall see in Section \ref{section;push-pull} that the
$T$-equivariant K-group of a $G$-space is a module over the ring
$\ca{D}$.  In this section we collect some facts regarding abstract
$\ca{D}$-modules.  The notation is as in Sections
\ref{section;operator} and \ref{section;algebra}.

Let $A$ be a left $\ca{D}$-module.  We say an element of $A$ is
\emph{$\ca{D}$-invariant} or \emph{Hecke invariant} if it is
annihilated by all operators in the augmentation left ideal
$I(\ca{D})$.  We denote by $A^{I(\ca{D})}$ the group of invariants.
By Theorem
\ref{theorem;kostant-kumar}\eqref{item;kostant-kumar-ideal},
$$
A^{I(\ca{D})}=\{\,a\in A\mid\partial'_w(a)=0\text{ for all
$w\ne1$}\,\}.
$$

This group is not a $\ca{D}$-submodule of $A$, but a submodule over
the ring $R(T)^W\cong R(G)$.  Since $I(\ca{D})$ contains the
augmentation ideal $I(W)$ of $\Z[W]$, the Hecke invariants are
contained in the Weyl invariants,
\begin{equation}\label{equation;augmentation-weyl}
A^{I(\ca{D})}\subset A^W,
\end{equation}
an inclusion which is in general not an equality.  On the other hand,
for the $\ca{D}$-module $R(T)$ it follows from the definition
\eqref{equation;operator'} of the operators $\delta'_\alpha$ that
$$R(T)^{I(\ca{D})}=R(T)^W.$$
This leads to the following characterization of the augmentation left
ideal.

\begin{lemma}\label{lemma;augmentation}
$I(\ca{D})=\{\,\Delta\in\ca{D}\mid\Delta\circ\partial_{w_0}=0\,\}$,
the left annihilator of $\partial_{w_0}$.
\end{lemma}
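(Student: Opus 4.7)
The plan is to prove the two inclusions separately; the containment $\supseteq$ is immediate, and the work lies in the reverse containment.

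For $\supseteq$, I first observe that $\partial_{w_0}(1) = 1$, since $\partial_{w_0}$ is a composition of operators $\delta_\alpha$ and each of these satisfies $\delta_\alpha(1) = 1$ by \eqref{equation;square}. Hence whenever $\Delta \circ \partial_{w_0} = 0$, evaluating at $1 \in R(T)$ gives $\Delta(1) = \Delta(\partial_{w_0}(1)) = 0$, so $\Delta \in I(\ca{D})$.

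For $\subseteq$, Theorem \ref{theorem;kostant-kumar}\eqref{item;kostant-kumar-ideal} presents $I(\ca{D})$ as the free left $R(T)$-module on the basis $(\partial'_w)_{w \ne 1}$, so it suffices to verify that $\partial'_w \circ \partial_{w_0} = 0$ for every $w \ne 1$. The key calculation is that for any simple reflection $s$ with associated root $\alpha$, one has $\delta'_\alpha \circ \delta_\alpha = 0$: indeed, writing $\delta'_\alpha = e^\alpha(\delta_\alpha - 1)$ via \eqref{equation;two-operators} and using $\delta_\alpha^2 = \delta_\alpha$ from \eqref{equation;square},
$$
\delta'_\alpha \circ \delta_\alpha = e^\alpha(\delta_\alpha^2 - \delta_\alpha) = 0.
$$

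To exploit this, fix any reduced expression $w = s_1 s_2 \cdots s_k$ with $k \geq 1$, so that $\partial'_w = \delta'_{s_1} \cdots \delta'_{s_k}$ by Theorem \ref{theorem;operators}. Since $w_0$ is the longest element, every simple reflection satisfies $l(s_k w_0) = l(w_0) - 1$, and therefore $w_0 = s_k \cdot (s_k w_0)$ is a reduced expression. Theorem \ref{theorem;operators} then gives $\partial_{w_0} = \delta_{s_k} \circ \partial_{s_k w_0}$, and the cancellation occurs at the junction:
$$
\partial'_w \circ \partial_{w_0} = \delta'_{s_1} \cdots \delta'_{s_{k-1}} \circ (\delta'_{s_k} \circ \delta_{s_k}) \circ \partial_{s_k w_0} = 0.
$$

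The only conceptual step is spotting the identity $\delta'_s \delta_s = 0$; once one has this, the choice of reduced expressions is automatic because every simple reflection shortens $w_0$, and no deep input beyond Theorem \ref{theorem;operators} and Theorem \ref{theorem;kostant-kumar} is needed.
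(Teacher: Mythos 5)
Your proof is correct, and for the substantial direction it takes a genuinely different route from the paper. For the containment $I(\ca{D})\subseteq\{\Delta:\Delta\circ\partial_{w_0}=0\}$, the paper argues conceptually: it observes via \eqref{equation;long-symmetric} that $\partial_{w_0}=j^*j_*$ maps $R(T)$ into $j^*(R(G))\subseteq R(T)^W=R(T)^{I(\ca{D})}$, so any $\Delta\in I(\ca{D})$ kills the image. You instead work inside the Demazure calculus: you expand $I(\ca{D})$ using the $R(T)$-basis $(\partial'_w)_{w\ne1}$ from Theorem~\ref{theorem;kostant-kumar}\eqref{item;kostant-kumar-ideal}, isolate the identity $\delta'_\alpha\circ\delta_\alpha=0$ (a one-line consequence of $\delta_\alpha^2=\delta_\alpha$ and $\delta'_\alpha=e^\alpha(\delta_\alpha-1)$), and then choose compatible reduced expressions so that the junction between $\partial'_w$ and $\partial_{w_0}$ contains a factor $\delta'_{\beta}\circ\delta_{\beta}$; the freedom in Theorem~\ref{theorem;operators} together with $l(sw_0)=l(w_0)-1$ for every simple $s$ makes this possible. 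Your approach stays entirely combinatorial and avoids the push-pull interpretation of $\partial_{w_0}$ as well as the identification $R(T)^W=R(T)^{I(\ca{D})}$, at the cost of explicitly invoking the basis of $I(\ca{D})$ and a reduced-word argument; the paper's proof is shorter but leans on the geometric meaning of $\partial_{w_0}$. For the converse inclusion both proofs reduce to $\partial_{w_0}(1)=1$ and evaluate at $1$; you obtain $\partial_{w_0}(1)=1$ from $\delta_\alpha(1)=1$ rather than from \eqref{equation;push-pull-weyl} and \eqref{equation;long-symmetric}, which is an equally valid and arguably more elementary derivation.
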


\begin{proof}
Let $\Delta\in I(\ca{D})$ and $u\in R(T)$.  It follows from
\eqref{equation;long-symmetric} that $\partial_{w_0}(u)=j^*j_*(u)$ is
in $R(T)^W=R(T)^{I(\ca{D})}$ , so $\Delta\partial_{w_0}(u)=0$.  Hence
$\Delta\circ\partial_{w_0}=0$.  Conversely, let $\Delta\in\ca{D}$ and
suppose $\Delta\circ\partial_{w_0}=0$.  It follows from
\eqref{equation;push-pull-weyl} and \eqref{equation;long-symmetric}
that $\partial_{w_0}(1)=1$.  Therefore
$\Delta(1)=\Delta\partial_{w_0}(1)=0$, and hence $\Delta\in
I(\ca{D})$.
\end{proof}

\begin{lemma}\label{lemma;invariant}
Let $A$ be a $\ca{D}$-module.  Then $\Delta(ua)=\Delta(u)a$ for all
$\Delta\in\ca{D}$, $u\in R(T)$ and $a\in A^{I(\ca{D})}$.
\end{lemma}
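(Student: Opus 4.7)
The plan is to exploit the left $R(T)$-module decomposition $\ca{D} = R(T)\cdot\partial'_1 \oplus I(\ca{D})$ provided by Theorem \ref{theorem;kostant-kumar}\eqref{item;kostant-kumar'} and \eqref{item;kostant-kumar-ideal}, bearing in mind that $\partial'_1$ (the empty product of divided difference operators) is the identity endomorphism of $R(T)$, so the $\partial'_1$-component of an element of $\ca{D}$ is an honest element of $R(T)$ acting by multiplication.

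Fix $\Delta\in\ca{D}$ and $u\in R(T)$, and form the product $\Delta u$ inside the algebra $\ca{D}$. By the cited decomposition, there are unique $v\in R(T)$ and $\Delta''\in I(\ca{D})$ with $\Delta u = v + \Delta''$. To identify the coefficient $v$, I would evaluate both sides as endomorphisms of $R(T)$ at $1\in R(T)$: since $\Delta''\in I(\ca{D})$ annihilates $1$ by definition, one finds $(\Delta u)(1) = v\cdot 1 = v$, while on the other hand $(\Delta u)(1) = \Delta(u\cdot 1) = \Delta(u)$. Thus $v = \Delta(u)$, giving the identity
\begin{equation*}
\Delta u = \Delta(u) + \Delta''\qquad\text{in }\ca{D},\quad\Delta''\in I(\ca{D}).
\end{equation*}

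Now apply both sides to $a\in A^{I(\ca{D})}$ using the $\ca{D}$-module structure on $A$. The left side equals $\Delta(ua)$ by associativity of the action. The right side equals $\Delta(u)\cdot a + \Delta''(a) = \Delta(u)\cdot a$, because $\Delta''\in I(\ca{D})$ kills $a$ by hypothesis. This yields $\Delta(ua) = \Delta(u)a$, as desired.

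I do not foresee a main obstacle: the only conceptual point is to keep straight the dual role played by elements of $R(T)$ and of $\ca{D}$, namely as endomorphisms of $R(T)$ (where $\Delta(u)\in R(T)$ is meaningful) versus as operators on $A$ (where $ua\in A$ and $\Delta(ua)\in A$ are meaningful). The trick of evaluating the algebraic identity $\Delta u = \Delta(u) + \Delta''$ both at $1\in R(T)$ and at $a\in A$ is precisely what bridges these two roles.
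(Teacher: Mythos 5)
Your argument is correct, and it takes a genuinely different route from the paper's. The paper proves the identity first for $\Delta=\delta_\alpha$ by invoking the Leibniz rule \eqref{equation;product}, which, read as an identity in $\ca{D}$, says $\delta_\alpha u = \delta_\alpha(u) + e^{-\alpha}s_\alpha(u)\,\delta'_\alpha$ with the second summand in $I(\ca{D})$; it then propagates the identity to $\partial_w$ by iterating through a reduced word (Theorem \ref{theorem;operators}) and finally to arbitrary $\Delta$ via the basis $(\partial_w)_w$ of Theorem \ref{theorem;kostant-kumar}\eqref{item;kostant-kumar}. You instead bypass the Leibniz rule entirely and work from the direct-sum decomposition $\ca{D}=R(T)\oplus I(\ca{D})$ of left $R(T)$-modules implicit in Theorem \ref{theorem;kostant-kumar}\eqref{item;kostant-kumar'}--\eqref{item;kostant-kumar-ideal}, identifying the $R(T)$-component of $\Delta u$ as $\Delta(u)$ by evaluating at $1$. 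This is cleaner and handles a general $\Delta$ in one stroke rather than by an induction through generators; the price is that it leans on the full force of the Kostant--Kumar basis theorem, whereas the paper's computation with $\delta_\alpha$ is more elementary and would still prove the special cases even if one did not have the structural theorem in hand. Both proofs ultimately rest on the same phenomenon: $\Delta u - \Delta(u)$ lies in $I(\ca{D})$ and hence kills $a$. Your write-up makes that mechanism explicit, which is a nice conceptual clarification.
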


\begin{proof}
The identity \eqref{equation;product} holds if we replace $u_1$ with
$u\in R(T)$ and $u_2$ with $a\in A$.  Therefore, for each root
$\alpha$, we have $\delta_\alpha(ua)=\delta_\alpha(u)a$ if
$\delta'_\alpha(a)=0$.  Let $a\in A^{I(\ca{D})}$.  Then
$\delta'_\alpha(a)=0$ for all $\alpha$, so
$\delta_\alpha(ua)=\delta_\alpha(u)a$ for all $\alpha$, and hence
$\partial_w(ua)=\partial_w(u)a$ for all $w\in W$ by Theorem
\ref{theorem;operators}.  It now follows from Theorem
\ref{theorem;kostant-kumar}\eqref{item;kostant-kumar} that
$\Delta(ua)=\Delta(u)a$ for all $\Delta\in\ca{D}$.
\end{proof}

The next two results give a measure of the discrepancy between $A^W$
and $A^{I(\ca{D})}$.  In Theorem \ref{theorem;weyl}, Lemma
\ref{lemma;left-inverse}\eqref{item;left-inverse} will be interpreted
as a ``Weyl character formula''.

\begin{lemma}\label{lemma;left-inverse}
Choose $u_0\in R(T)$ satisfying $\partial_{w_0}(u_0)=1$, e.g.\
$u_0=1$.  Define $\pi\in\ca{D}$ by $\pi(u)=\partial_{w_0}(u_0u)$ for
$u\in R(T)$.  Let
$$\ca{J}=\ca{D}\cdot\pi+\sum_{w\in W}\ca{D}\cdot(1-w)$$
be the left ideal of $\ca{D}$ generated by $\pi$ and $I(W)$.  Let $A$
be a left $\ca{D}$-module.
\begin{enumerate}
\item\label{item;left-inverse}
$\pi\colon A\to A$ projects $A$ onto the $R(T)^W$-submodule
$A^{I(\ca{D})}$.
\item\label{item;left-ideal}
$A^W=A^{I(\ca{D})}\oplus A^{\ca{J}}$.  Hence $A^W=A^{I(\ca{D})}$ if
and only if $A^{\ca{J}}=0$.
\end{enumerate}
\end{lemma}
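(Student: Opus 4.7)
The plan is to prove (i) first, establishing that $\pi$ is an idempotent in $\ca{D}$ whose image on $A$ is exactly $A^{I(\ca{D})}$, and then to use this projector to split $A^W$ in the proof of (ii).

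For (i), I will argue in two steps. Note first that $\pi=\partial_{w_0}\circ u_0$ really lies in $\ca{D}$, since both $\partial_{w_0}\in\ca{D}$ and multiplication by $u_0\in R(T)\subset\ca{D}$ do. To show $\pi(A)\subset A^{I(\ca{D})}$, take any $\Delta\in I(\ca{D})$; by Lemma \ref{lemma;augmentation} we have $\Delta\circ\partial_{w_0}=0$ in $\ca{D}$, hence $\Delta\circ\pi=(\Delta\circ\partial_{w_0})\circ u_0=0$ in $\ca{D}$ as well, and this identity transfers to the $\ca{D}$-module $A$. To show that $\pi$ acts as the identity on $A^{I(\ca{D})}$, let $a\in A^{I(\ca{D})}$ and use Lemma \ref{lemma;invariant} to pull $u_0$ across $\partial_{w_0}$: $\pi(a)=\partial_{w_0}(u_0a)=\partial_{w_0}(u_0)\,a=a$, by the choice of $u_0$. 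An immediate corollary is $\pi^2=\pi$ on $A$.

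For (ii), I first observe that both summands sit inside $A^W$: the inclusion $A^{I(\ca{D})}\subset A^W$ is \eqref{equation;augmentation-weyl}, and $A^{\ca{J}}\subset A^W$ because $I(W)\subset\ca{J}$ by construction. The intersection is zero, since $a\in A^{I(\ca{D})}\cap A^{\ca{J}}$ must satisfy both $\pi(a)=a$ (by (i)) and $\pi(a)=0$ (because $\pi\in\ca{J}$). For the sum, take $a\in A^W$ and write $a=\pi(a)+(a-\pi(a))$. The first summand lies in $A^{I(\ca{D})}$ by (i). For the second, $\pi(a-\pi(a))=\pi(a)-\pi^2(a)=0$, while $a-\pi(a)\in A^W$ because $\pi(a)\in A^{I(\ca{D})}\subset A^W$; since every element of the left ideal $\ca{J}$ is a finite sum of operators of the form $d\circ\pi$ and $d\circ(1-w)$ with $d\in\ca{D}$ and $w\in W$, each of which manifestly annihilates $a-\pi(a)$, we conclude $a-\pi(a)\in A^{\ca{J}}$.

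The argument is essentially mechanical once Lemmas \ref{lemma;augmentation} and \ref{lemma;invariant} are in hand, so no real obstacle remains. The only delicate point is remembering that identities proved in the algebra $\ca{D}$, such as $\Delta\circ\partial_{w_0}=0$ for $\Delta\in I(\ca{D})$, automatically carry over to the action on any $\ca{D}$-module $A$; this is precisely what allows us to conclude in (i) that $\pi(a)$ is $I(\ca{D})$-annihilated for every $a\in A$, and thereby to obtain the clean projection formula driving the whole decomposition.
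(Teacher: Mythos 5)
Your proof is correct and follows essentially the same route as the paper's: part (i) combines Lemma \ref{lemma;augmentation} (to get $\pi(A)\subset A^{I(\ca{D})}$) with Lemma \ref{lemma;invariant} (to get $\pi|_{A^{I(\ca{D})}}=\mathrm{id}$), and part (ii) splits $a\in A^W$ as $\pi(a)+(a-\pi(a))$. You simply spell out more explicitly the verification that $A^W\cap\ker\pi=A^{\ca{J}}$, which the paper asserts in passing.
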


\begin{proof}
It follows from Lemma \ref{lemma;augmentation} that $\pi$ maps $A$ to
$A^{I(\ca{D})}$.  If $a\in A^{I(\ca{D})}$, then Lemma
\ref{lemma;invariant} shows that
$$
\pi(a)=\partial_{w_0}(u_0a)=\partial_{w_0}(u_0)a=a,
$$
since $\partial_{w_0}(u_0)=1$.  This proves that
$\pi(A)=A^{I(\ca{D})}$ and $\pi^2=\pi$, which establishes
\eqref{item;left-inverse}.  It follows from \eqref{item;left-inverse}
that $A$ is the direct sum of the $R(T)^W$-submodules $A^{I(\ca{D})}$
and $\ker\pi$.  Moreover, it follows from \eqref{item;left-inverse}
and \eqref{equation;augmentation-weyl} that $\pi$ maps $A^W$ into
itself.  Therefore $A^W$ is the direct sum of $A^{I(\ca{D})}$ and
$A^W\cap\ker\pi=A^{\ca{J}}$.
\end{proof}

\begin{remark}\label{remark;extension}
Let $\k$ be an arbitrary commutative ring.  For a $\Z$-module $M$,
denote by $M_\k$ the $\k$-module $\k\otimes_\Z M$.  Lemma
\ref{lemma;left-inverse} generalizes in an obvious way if we extend
scalars from $\Z$ to $\k$.  Thus, if we take $u_0$ to be any element
of $R(T)_\k$ satisfying $\partial_{w_0}(u_0)=1$ and define
$\pi\in\ca{D}_\k$ by $\pi(u)=\partial_{w_0}(u_0u)$ for $u\in R(T)_\k$,
then $\pi$ projects $A$ onto $A^{I(\ca{D}_\k)}$ for any left
$\ca{D}_\k$-module $A$.
\end{remark}

Recall that $\d=\prod_{\alpha\in\ca{R}^+}(1-e^{-\alpha})\in R(T)$ is
the Weyl denominator \eqref{equation;denominator}.  For simplicity we
denote its image $1_\k\otimes\d$ in $R(T)_\k$ also by $\d$.

\begin{lemma}\label{lemma;discriminant}
The notation and the assumptions are as in Remark
{\rm\ref{remark;extension}}.
\begin{enumerate}
\item\label{item;weyl}
We have $\d\in\ca{J}_\k$.  Hence $A^W=A^{I(\ca{D}_\k)}$ if $\d$ is not
a zero divisor in $A$.
\item\label{item;weyl-invertible}
Assume that $\abs{W}$ is invertible in $\k$ and let
$u_0=\abs{W}^{-1}\d\in R(T)_\k$.  Then $\partial_{w_0}(u_0)=1$ and
$\pi\colon A\to A$ is the projection map onto the $W$-invariants,
$$\pi=\frac1{\abs{W}}\sum_{w\in W}w.$$
Hence $A^W=A^{I(\ca{D}_\k)}$.
\end{enumerate}
\end{lemma}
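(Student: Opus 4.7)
The plan is to treat the two parts separately; for \eqref{item;weyl} I will exhibit $\d$ explicitly as an element of the left ideal $\ca{J}_\k$, and for \eqref{item;weyl-invertible} I will identify $\pi$ with the averaging projection onto $W$-invariants. The final assertion of \eqref{item;weyl} is then formal: if $\d\in\ca{J}_\k$ and $\d$ is not a zero divisor in $A$, then $A^{\ca{J}_\k}=0$, and Lemma~\ref{lemma;left-inverse}\eqref{item;left-ideal} forces $A^W=A^{I(\ca{D}_\k)}$.

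For the main claim of \eqref{item;weyl}, I would start from the operator identity $\d\,\partial_{w_0}=J$, which follows by combining \eqref{equation;push-pull-weyl} and \eqref{equation;long-symmetric}, and then expand $J=\sum_{w\in W}\det(w)\,w_\odot$ via \eqref{equation;anti-invariant}. A direct check shows that the $\rho$-shifted operator $w_\odot = e^{-\rho}we^\rho$ coincides with multiplication by $e^{w(\rho)-\rho}\in R(T)_\k$ composed with the $W$-action of $w$; this is meaningful because $w(\rho)-\rho\in\X(T)$ even when $\rho\notin\X(T)$. Using also the commutation $w\cdot u_0 = w(u_0)\cdot w$ of operators, one obtains
\[
\d\,\pi \;=\; \d\,\partial_{w_0}u_0 \;=\; Ju_0 \;=\; \sum_{w\in W}\det(w)\,e^{w(\rho)-\rho}\,w(u_0)\,w.
\]
Writing $w = 1-(1-w)$ in each summand and noting that each $\Delta\cdot(1-w)$ lies in $\ca{J}_\k$, this is congruent modulo $\ca{J}_\k$ to $\sum_w\det(w)\,w\odot u_0 = J(u_0) = \d\,\partial_{w_0}(u_0) = \d$. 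Since $\d\,\pi\in\ca{J}_\k$ by construction, we conclude $\d\in\ca{J}_\k$.

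For \eqref{item;weyl-invertible}, the choice $u_0 = \abs{W}^{-1}\d$ satisfies $\partial_{w_0}(u_0)=1$ by a direct calculation: the element $e^\rho\d = \prod_{\alpha\in\ca{R}^+}(e^{\alpha/2}-e^{-\alpha/2})$ is $W$-antiinvariant, so $w\odot\d = \det(w)\d$ for every $w$, whence $J(\d) = \abs{W}\d$ and $\partial_{w_0}(\d) = J(\d)/\d = \abs{W}$. The same antiinvariance, together with the identity $w\odot(u_0u) = (w\odot u_0)\cdot w(u)$, collapses $\pi(u) = J(u_0u)/\d$ to the averaging map $\abs{W}^{-1}\sum_{w\in W}w(u)$. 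This map manifestly sends $A$ into $A^W$ and fixes $A^W$ pointwise, so Lemma~\ref{lemma;left-inverse}\eqref{item;left-inverse} yields $A^W = \pi(A) = A^{I(\ca{D}_\k)}$; the reverse inclusion is \eqref{equation;augmentation-weyl}. The main obstacle throughout is the $\rho$-shifted bookkeeping: $\rho$ itself need not lie in $\X(T)$, but both $w(\rho)-\rho$ and the composite operator $e^{-\rho}we^\rho$ make perfect sense, and once this is recognized the remaining manipulations are routine.
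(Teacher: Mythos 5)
Your proof is correct and follows essentially the same route as the paper: both establish $\d\in\ca{J}_\k$ by computing $\d\pi = Ju_0$, isolating the part that is a multiplication operator (which equals $J(u_0)=\d$ since $\partial_{w_0}(u_0)=1$), and discarding the $\sum_w\Delta_w(1-w)$ remainder into $\ca{J}_\k$; your ``$w=1-(1-w)$ modulo $\ca{J}_\k$'' step is a cosmetic rewriting of the paper's ``add and subtract $J(u_0)u$.'' The computation of $\partial_{w_0}(\d)=\abs{W}$ via the antiinvariance of $e^\rho\d$, and the identification of $\pi$ with the averaging operator, also match the paper's argument.
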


\begin{proof}
The $\rho$-shifted Weyl action $w\odot u=e^{-\rho}w(e^\rho u)$ on
$R(T)$ has the evident properties
\begin{equation}\label{equation;shift}
w\odot(u_1u_2)=(w\odot u_1)w(u_2),\qquad w\odot J(u)=J(w\odot
u)=\det(w)J(u)
\end{equation}
for all $w\in W$ and $u_1$, $u_2$, $u\in R(T)$, where $J$ is the
$\rho$-shifted antisymmetrizer \eqref{equation;anti-invariant}.  It
follows from \eqref{equation;long-symmetric} and
\eqref{equation;shift} that for all $u\in R(T)$
\begin{align*}
\d\pi(u)&=\d\partial_{w_0}(u_0u)=J(u_0u)=J(u_0)u+J(u_0u)-J(u_0)u\\
&=\d\partial_{w_0}(u_0)u+\sum_w\det(w)(w\odot u_0)(w-1)(u)\\
&=\d u+\sum_w\det(w)(w\odot u_0)(w-1)(u).
\end{align*}
Hence 
$$\d=\d\pi+\sum_w\det(w)(w\odot u_0)(1-w)\in\ca{J}.$$
Therefore, if $\d$ is not a zero divisor in $A$, then $A^{\ca{J}}=0$,
so $A^W=A^{I(\ca{D})}$ by Lemma
\ref{lemma;left-inverse}\eqref{item;left-ideal}.  The same argument
holds if we extend scalars from $\Z$ to $\k$.  This proves
\eqref{item;weyl}.  Now assume that $\abs{W}$ is invertible in $\k$.
It follows from \eqref{equation;shift} that $w\odot\d=\det(w)\d$ for
all $w\in W$.  Hence, by \eqref{equation;long-symmetric} and
\eqref{equation;shift},
\begin{multline*}
\partial_{w_0}(\d u)=\frac1{\d}J(\d u)=\frac1\d\sum_w\det(w)w\odot(\d
u)\\
=\frac1\d\sum_w\det(w)(w\odot\d)w(u)=\sum_ww(u)
\end{multline*}
for all $u\in R(T)_\k$.  In the first place, this identity shows that
$\partial_{w_0}(\d)=\abs{W}$, and so $\partial_{w_0}(u_0)=1$.
Secondly, it shows that
$$\pi(u)=\partial_{w_0}(u_0u)=\frac1{\abs{W}}\sum_{w\in W}w(u).$$
Hence $A^W=A^{I(\ca{D}_\k)}$ by virtue of Remark
\ref{remark;extension}.
\end{proof}

\section{Push-pull operators}\label{section;push-pull}

As in the previous section, $G$ denotes a compact connected Lie group
with maximal torus $T$, root system $\ca{R}$ and Weyl group $W$.  We
fix a basis of $\ca{R}$ and define the operators $\partial_w$ and
$\partial'_w$ as in Theorem \ref{theorem;operators}.  We denote by $X$
a compact topological space on which $G$ acts continuously.

\subsection*{The algebra $\ca{D}$ acts on K-theory}

As in \cite{segal;equivariant-k-theory;;1968} we denote by
$$K_G^*(X)=K_G^0(X)\oplus K_G^{-1}(X)$$
the equivariant K-theory of $X$.  Recall that
$$K_G^*(\pt)=K_G^0(\pt)=R(G)$$
is the representation ring of $G$, where $\pt$ denotes a space
consisting of a single point.

Let $j\colon T\to G$ be the inclusion map and
$$j^*\colon K_G^*(X)\longto K_T^*(X)$$
the restriction homomorphism.  As shown by Atiyah
\cite[\S\,4]{atiyah;bott-periodicity-index}, the functional $j_*\in
R(T)^\vee$ generalizes to a pushforward homomorphism
$$j_*\colon K_T^*(X)\longto K_G^*(X).$$
He proved that $j_*$ is $K_G^*(X)$-linear and satisfies $j_*(1)=1$,
and deduced from this the following ``splitting principle''.

\begin{proposition}%
[{\cite[Proposition~4.9]{atiyah;bott-periodicity-index}}]
\label{proposition;atiyah}
$j_*$ is a left inverse of $j^*$.  Hence $j^*$ is injective and maps
$K_G^*(X)$ onto a direct summand of $K_T^*(X)$.
\end{proposition}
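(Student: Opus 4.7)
The plan is to derive the proposition directly from the two properties of $j_*$ that Atiyah establishes just prior to its statement: the projection formula
$$j_*(j^*(v) u) = v\, j_*(u)$$
for $v \in K_G^*(X)$ and $u \in K_T^*(X)$ (the $X$-family analogue of \eqref{equation;linear}), together with the normalization $j_*(1) = 1$.

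The first assertion, that $j_*$ is a left inverse of $j^*$, is then a one-line computation. For any $v \in K_G^*(X)$, setting $u = 1$ in the projection formula gives
$$j_*(j^*(v)) = j_*\bigl(j^*(v) \cdot 1\bigr) = v \cdot j_*(1) = v.$$
In particular $j^*$ is injective, since it admits a left inverse.

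For the second assertion, I would form the endomorphism $p = j^* \circ j_*$ of $K_T^*(X)$. Using $j_* \circ j^* = \mathrm{id}$ on $K_G^*(X)$, one gets $p^2 = j^*(j_* j^*)j_* = j^* j_* = p$, so $p$ is an idempotent and $K_T^*(X) = \im(p) \oplus \ker(p)$. Injectivity of $j^*$ identifies $\im(p)$ with $j^*(K_G^*(X))$ and $\ker(p)$ with $\ker(j_*)$, yielding the claimed splitting as $j^*(K_G^*(X)) \oplus \ker(j_*)$.

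The real content of the proposition is not this algebraic manipulation but rather the construction of $j_*$ and the verification of its two key properties for arbitrary $X$; this is the main obstacle and is not carried out here. It is done in \cite{atiyah;bott-periodicity-index} via a family of Dolbeault operators on $G/T$ parametrized by $X$: the projection formula comes from multiplicativity of the equivariant analytic index under external tensor products, and $j_*(1) = 1$ reduces, via a fiberwise application of Borel--Weil, to the computation of the index of $\bar\partial$ on $G/T$ with trivial coefficients.
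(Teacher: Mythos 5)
Your proposal is correct and follows the same route the paper indicates: the paper itself gives no proof of this proposition, but states immediately before it that Atiyah established the $K_G^*(X)$-linearity of $j_*$ and the normalization $j_*(1)=1$ and ``deduced from this'' the splitting principle, which is exactly the one-line computation and idempotent argument you carry out. You are also right to flag that the actual content lies in Atiyah's construction of the family pushforward $j_*$ and the verification of those two properties, which both you and the paper defer to \cite{atiyah;bott-periodicity-index}.
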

 
\begin{remark}\label{remark;paracompact}
The compactness assumption on $X$ is frequently a nuisance in
practice.  A map $j_*$ with the above properties can presumably be
defined for any paracompact $G$-space $X$, provided that we replace
the compactly supported K-theory of
\cite{segal;equivariant-k-theory;;1968} with the K-theory of
\cite[\S\,4]{atiyah-segal;equivariant-completion}.  If this is true,
and if the K\"unneth formula of Theorem \ref{theorem;kunneth} below
generalizes to this setting, then it is a straightforward exercise to
extend the results of Sections \ref{section;push-pull} and
\ref{section;duality} to paracompact spaces.
\end{remark}

The purpose of this section is to prove Theorem \ref{theorem;d} below,
which describes the subgroup $j^*(K_G^*(X))$ in terms of divided
difference operators.  This subgroup is contained in, but not
necessarily equal to, $K_T^*(X)^W$, the Weyl invariants in $K_T^*(X)$.
(See Example \ref{example;mcleod}.)

First we show how to extend the Demazure operators to K-theory.
Prompted by \eqref{equation;long-symmetric}, we define the
endomorphism $\delta_X$ of $K_T^*(X)$ by
\begin{equation}\label{equation;push-pull}
\delta_X=j^*j_*.
\end{equation}
Similarly, let $\alpha$ be a root, let $G_\alpha$ be the centralizer
in $G$ of $\ker\alpha$, and let $j_\alpha\colon T\to G_\alpha$ be the
inclusion.  Define the endomorphism $\delta_{\alpha,X}$ of $K_T^*(X)$
by
\begin{equation}\label{equation;push-pull-root}
\delta_{\alpha,X}=j_\alpha^*j_{\alpha,*}.
\end{equation}
To define the induction map $j_{\alpha,*}$ we must choose a complex
structure on $G_\alpha/T$.  We do this by identifying $G_\alpha/T$
with the complex homogeneous space $(G_\alpha)_\C/B$, where $B$ is the
Borel subgroup of $(G_\alpha)_\C$ generated by $T_\C$ and the root
space $\g_\C^{-\alpha}$.

\begin{lemma}\label{lemma;square}
The operator $\delta_X$ is $K_G^*(X)$-linear:
$\delta_X(j^*(b)a)=j^*(b)\delta_X(a)$ for all $a\in K_T^*(X)$ and
$b\in K_G^*(X)$.  Hence $\delta_X(j^*(b))=j^*(b)$ and
$\delta_X^2=\delta_X$.
\end{lemma}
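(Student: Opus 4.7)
The plan is to derive all three assertions from just three structural facts already stated in the excerpt: (i) the pushforward $j_*\colon K_T^*(X)\to K_G^*(X)$ is $K_G^*(X)$-linear in the sense of the projection formula, $j_*(j^*(b)\cdot a)=b\cdot j_*(a)$, which is the K-theoretic analogue of \eqref{equation;linear}; (ii) $j^*$ is a ring homomorphism; and (iii) $j_*\circ j^*=\id$, i.e.\ $j_*(1)=1$ together with the projection formula, as in Proposition \ref{proposition;atiyah}.

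First I would verify $K_G^*(X)$-linearity of $\delta_X=j^*j_*$ directly: given $a\in K_T^*(X)$ and $b\in K_G^*(X)$, compute
$$
\delta_X(j^*(b)\,a)=j^*\bigl(j_*(j^*(b)\,a)\bigr)=j^*\bigl(b\cdot j_*(a)\bigr)=j^*(b)\cdot j^*j_*(a)=j^*(b)\,\delta_X(a),
$$
where the second equality uses the projection formula and the third uses that $j^*$ is multiplicative. This is the only place where a nontrivial input is used, and it is essentially a citation to \cite{atiyah;bott-periodicity-index}.

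Next I would deduce the remaining two identities as formal consequences. Setting $a=1$ in the $K_G^*(X)$-linearity and using $\delta_X(1)=j^*j_*(1)=j^*(1)=1$ yields $\delta_X(j^*(b))=j^*(b)$ for every $b\in K_G^*(X)$. Finally, for any $a\in K_T^*(X)$ the class $j_*(a)$ lives in $K_G^*(X)$, so applying the previous identity with $b=j_*(a)$ gives
$$
\delta_X^2(a)=\delta_X\bigl(j^*(j_*(a))\bigr)=j^*(j_*(a))=\delta_X(a),
$$
which proves $\delta_X^2=\delta_X$.

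There is no real obstacle here; the lemma is a direct formal corollary of Atiyah's projection formula and his splitting principle, mirroring the representation-ring identity \eqref{equation;long-symmetric} in the parametrized setting. The only thing one should be slightly careful about is verifying that the projection formula $j_*(j^*(b)\,a)=b\,j_*(a)$ is genuinely available for arbitrary compact $G$-spaces $X$ in Segal's equivariant K-theory; this is where one invokes \cite[\S\,4]{atiyah;bott-periodicity-index}, and it is the reason why the compactness hypothesis on $X$ (cf.\ Remark \ref{remark;paracompact}) is being imposed.
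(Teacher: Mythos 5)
Your proof is correct and follows exactly the same route as the paper: the projection formula gives $K_G^*(X)$-linearity, then specializing to $a=1$ (using $j_*(1)=1$) gives $\delta_X\circ j^*=j^*$, and applying this with $b=j_*(a)$ gives idempotency. The only cosmetic difference is that you explicitly unpack the second equality via $\delta_X(1)=1$ where the paper writes $\delta_X(j^*(b))=j^*(b)\delta_X(1)=j^*(b)$ in one step; substantively the two arguments coincide.
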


\begin{proof}
Since $j_*$ is a homomorphism of $K_G^*(X)$-modules,
$$\delta_X(j^*(b)a)=j^*j_*(j^*(b)a)=j^*(bj_*(a))=j^*(b)\delta_X(a)$$
for all $a\in K_T^*(X)$ and $b\in K_G^*(X)$.  Therefore, since
$j_*(1)=1$,
$$\delta_X(j^*(b))=j^*(b)\delta_X(1)=j^*(b)$$
and $\delta_X^2(a)=\delta_X(j^*j_*(a))=j^*j_*(a)=\delta_X(a)$.
\end{proof}

The operators $\delta_{\alpha,X}$ have the same properties, as one
sees by applying Lemma \ref{lemma;square} to the group $G=G_\alpha$.

To check that the $\delta_{\alpha,X}$ generate an action of the
algebra $\ca{D}$ on $K_T^*(X)$, we invoke a special case of the
equivariant K\"unneth theorem, which was established by the combined
efforts of Hodgkin \cite{hodgkin;equivariant-kunneth}, Snaith
\cite{snaith;kunneth}, McLeod \cite{mcleod;kunneth-equivariant}, and
Rosenberg and Schochet \cite{rosenberg-schochet;equivariant-k-theory}.

\begin{theorem}\label{theorem;kunneth}
Assume that $\pi_1(G)$ is torsion-free.  Then the map
$$R(T)\otimes_{R(G)}K_G^*(X)\to K_T^*(X)$$
defined by $u\otimes b\mapsto u\cdot j^*(b)$ is an isomorphism of
$\Z/2\Z$-graded $R(T)$-algebras.
\end{theorem}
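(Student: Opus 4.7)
The plan is to invoke the equivariant K\"unneth spectral sequence
\begin{equation*}
E_2^{p,*} = \Tor^p_{R(G)}\bigl(R(T), K_G^*(X)\bigr) \;\Longrightarrow\; K_T^*(X),
\end{equation*}
constructed in this generality by Hodgkin, refined by Snaith and by McLeod, and later given an alternative KK-theoretic treatment by Rosenberg and Schochet. The whole point of the hypothesis that $\pi_1(G)$ be torsion-free is to make this spectral sequence collapse.

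First, I would invoke the Pittie--Steinberg theorem already cited in Section~\ref{section;algebra}: when $\pi_1(G)$ is torsion-free, $R(T)$ is a free $R(G)$-module of rank $\abs{W}$. In particular $R(T)$ is flat over $R(G)$, so $\Tor^p_{R(G)}(R(T), K_G^*(X))$ vanishes for all $p>0$. The spectral sequence therefore degenerates at $E_2$, leaving only the row $p=0$, which yields an isomorphism
\begin{equation*}
R(T) \otimes_{R(G)} K_G^*(X) \;\longiso\; K_T^*(X)
\end{equation*}
of $\Z/2\Z$-graded abelian groups.

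Second, I would identify this isomorphism with the map $u \otimes b \mapsto u\cdot j^*(b)$ of the statement. It is the edge homomorphism of the spectral sequence, whose concrete form is the pairing of the $R(T)$-module structure on $K_T^*(X)$ with the restriction $j^*\colon K_G^*(X) \to K_T^*(X)$; this pairing factors through $\otimes_{R(G)}$ by the $R(G)$-linearity of $j^*$. Multiplicativity is then immediate: equipping $R(T)\otimes_{R(G)}K_G^*(X)$ with the tensor product algebra structure, one has
\begin{equation*}
(u_1 \otimes b_1)(u_2 \otimes b_2) \;\longmapsto\; u_1 u_2\, j^*(b_1 b_2) = \bigl(u_1 j^*(b_1)\bigr)\bigl(u_2 j^*(b_2)\bigr),
\end{equation*}
because $j^*$ is a graded ring homomorphism and $K_T^*(X)$ is graded commutative. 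Compatibility with the $\Z/2\Z$-grading is automatic.

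The main obstacle lies in the construction of the K\"unneth spectral sequence itself, which requires resolving $R(T)$ by $R(G)$-projectives and realizing that resolution geometrically, either via $G$-equivariant smash products (as in Hodgkin's original approach, refined by Snaith and McLeod) or via Kasparov's bivariant theory (as in Rosenberg--Schochet). These are deep results, and the cited papers contain delicate analyses of convergence and of exactly which compact groups can be accommodated. For the present purpose these constructions are black-boxed; the only genuine input contributed here is the observation that Pittie--Steinberg flatness collapses the spectral sequence, reducing the claim to the formal tensor-product identification of the edge map with multiplication.
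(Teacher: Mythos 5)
The paper does not prove Theorem~\ref{theorem;kunneth}; it simply states it and attributes it to the combined work of Hodgkin, Snaith, McLeod, and Rosenberg--Schochet, so there is no in-paper argument to compare against. Your sketch is a fair account of how those references actually establish the result: set up the equivariant K\"unneth spectral sequence for $K_G^*(G/T\times X)\cong K_T^*(X)$ over $R(G)$, observe that the Pittie--Steinberg theorem makes $R(T)$ a free (hence flat) $R(G)$-module so the higher $\Tor$'s vanish, and identify the degenerate edge map with $u\otimes b\mapsto u\cdot j^*(b)$. One nuance worth flagging: the torsion-freeness of $\pi_1(G)$ does double duty here. You present it as the hypothesis that \emph{collapses} the spectral sequence via Pittie--Steinberg, which is true, but it is also precisely the Hodgkin condition needed to guarantee that the spectral sequence \emph{converges} (or even exists in a usable form) in the first place -- Hodgkin's original construction produces only a ``quasi-convergent'' sequence, and the delicate convergence analysis, carried out by Snaith and McLeod and given a cleaner KK-theoretic treatment by Rosenberg--Schochet, is where the hypothesis first enters. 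If you are going to black-box the construction, it would be more accurate to say the hypothesis serves both purposes, rather than attributing its entire force to the flatness of $R(T)$.
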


Let $\ca{E}(X)$ be the ring of $K_G^*(X)$-linear endomorphisms of the
abelian group $K_T^*(X)$.  Let $R(T)\to\ca{E}(X)$ be the homomorphism
defining the natural $R(T)$-module structure on $K_T^*(X)$.  Let
$\ca{S}$ be the set $\{\,\delta_\alpha\mid\alpha\in\ca{R}\,\}$ and
define a map from $\ca{S}$ to $\ca{E}(X)$ by
$\delta_\alpha\mapsto\delta_{\alpha,X}$.  By definition, the set
$\ca{S}\cup R(T)$ generates the ring $\ca{D}$, and the following
statement says that the map $\ca{S}\cup R(T)\to\ca{E}(X)$ just defined
extends uniquely to a ring homomorphism $\ca{D}\to\ca{E}(X)$.

\begin{proposition}\label{proposition;lift}
The operators $\delta_{\alpha,X}$, for $\alpha\in\ca{R}$, together
with the natural $R(T)$-module structure define a unique
$\ca{D}$-module structure on $K_T^*(X)$.  This $\ca{D}$-module
structure commutes with the $K_G^*(X)$-module structure and is
contravariant with respect to $G$-equivariant continuous maps and
covering homomorphisms of $G$.
\end{proposition}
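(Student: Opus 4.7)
The plan is to extend the assignments $u\mapsto u\cdot$ for $u\in R(T)$ and $\delta_\alpha\mapsto\delta_{\alpha,X}$ to a ring homomorphism $\ca{D}\to\ca{E}(X)$. Since $\ca{D}$ is generated by $R(T)$ and the $\delta_\alpha$, uniqueness is automatic, and the content lies in verifying that every identity holding in $\ca{D}$ is respected by the corresponding operators on $K_T^*(X)$. I would proceed in two stages, first handling the case where $\pi_1(G)$ is torsion-free and then reducing the general case to it via a covering.

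\emph{Step 1 (torsion-free case).} Assume $\pi_1(G)$ is torsion-free. By Theorem \ref{theorem;kunneth} there is an $R(T)$-algebra isomorphism $K_T^*(X)\cong R(T)\otimes_{R(G)}K_G^*(X)$, and by Proposition \ref{proposition;endomorphism}\eqref{item;algebras} the ring $\ca{D}$ equals all of $\End_{R(G)}(R(T))$.  Hence $\ca{D}$ acts on $K_T^*(X)$ through its natural action on the left factor of the tensor product; this action evidently commutes with the $K_G^*(X)$-action (which comes through the right factor) and restricts to multiplication on the subring $R(T)$.  To identify this abstract action with the operators $\delta_{\alpha,X}$ on the generator $\delta_\alpha$, take $a=u\cdot j^*(b)$ with $u\in R(T)$, $b\in K_G^*(X)$, write $j^*(b)=j_\alpha^*(b')$ with $b'\in K_{G_\alpha}^*(X)$ the pullback of $b$, and apply the projection formula for $j_{\alpha,*}$ together with \eqref{equation;long-symmetric} applied to the rank-one group $G_\alpha$ (whose Weyl group is $\{1,s_\alpha\}$, so that $j_\alpha^*j_{\alpha,*}=\partial_{s_\alpha}=\delta_\alpha$ on $R(T)$) to get
$$\delta_{\alpha,X}(u\cdot j^*(b))=j_\alpha^*(j_{\alpha,*}(u)\cdot b')=\delta_\alpha(u)\cdot j^*(b),$$
which is exactly the first-factor action of $\delta_\alpha$.

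\emph{Step 2 (general case).} Pick a covering homomorphism $\phi\colon\tilde{G}\to G$ with $\pi_1(\tilde{G})$ torsion-free; by Step 1, $K_{\tilde{T}}^*(X)$ carries a $\tilde{\ca{D}}$-module structure.  The finite central kernel $F=\ker\phi$ acts trivially on $X$, so the $F$-action on tautological bundles decomposes $K_{\tilde{T}}^*(X)$ as a direct sum over characters of $F$, with $K_T^*(X)$ realized as the trivial summand.  Multiplication by $R(T)=R(\tilde{T})^F$ preserves this grading, as does each $\tilde{\delta}_\alpha=j_{\tilde\alpha}^*j_{\tilde\alpha,*}$ by naturality of push-pull with respect to the central subgroup $F\subset\tilde{G}_\alpha$.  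Pulling the action back along the injection $\bar{\phi}\colon\ca{D}\hookrightarrow\tilde{\ca{D}}$ of Lemma \ref{lemma;covering} thus yields a $\ca{D}$-module structure on $K_T^*(X)$, and naturality of push-pull under $\phi$ identifies $\bar{\phi}(\delta_\alpha)$ restricted to $K_T^*(X)$ with $\delta_{\alpha,X}$.

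The remaining assertions are routine.  Each $\delta_{\alpha,X}$ is $K_{G_\alpha}^*(X)$-linear, hence $K_G^*(X)$-linear, by Lemma \ref{lemma;square} applied to $G_\alpha$, so the $\ca{D}$-action commutes with the $K_G^*(X)$-action; contravariance in $X$ follows from the functoriality of $j_\alpha^*$ and $j_{\alpha,*}$, and contravariance in coverings is built into Step 2 via Lemma \ref{lemma;covering}.  I expect the principal obstacle to be Step 2, specifically justifying that $K_T^*(X)$ is precisely the trivial $F$-isotypic summand of $K_{\tilde{T}}^*(X)$ and that every generator of $\tilde{\ca{D}}$ respects the $F$-grading, so that the Hecke action descends cleanly under $\bar{\phi}$.
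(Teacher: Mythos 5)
Your proposal is correct and follows the same broad strategy as the paper: treat the torsion-free case via the K\"unneth isomorphism, then reduce the general case to it via a covering $\phi\colon\tilde{G}\to G$.  A few remarks on the details.  In Step 1, invoking Proposition \ref{proposition;endomorphism}\eqref{item;algebras} is harmless but unnecessary: to define the action $\Delta\mapsto\Delta\otimes1$ on $R(T)\otimes_{R(G)}K_G^*(X)$ one only needs $\ca{D}\subset\ca{E}=\End_{R(G)}(R(T))$, not equality.  Your computation $\delta_{\alpha,X}(u\cdot j^*(b))=\delta_\alpha(u)\cdot j^*(b)$ is the content the paper extracts more tersely from Lemma \ref{lemma;square} and \eqref{equation;long-symmetric}; be aware that the last equality tacitly uses the compatibility $j_{\alpha,*}\circ p^*=p^*\circ j_{\alpha,*}$ for $p\colon X\to\pt$, which is an instance of the naturality of Atiyah's pushforward.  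In Step 2, the paper uses only Snaith's injectivity of $\phi^*\colon K_T^*(X)\to K_{\tilde{T}}^*(X)$ and the relation \eqref{equation;covering} to conclude that $\bar{\phi}(\ca{D})$ preserves the image $\phi^*(K_T^*(X))$; you instead unwind the $F$-isotypic decomposition of $K_{\tilde{T}}^*(X)$, $F=\ker\phi$, and identify $K_T^*(X)$ with the trivial summand.  Both variants work.  The obstacle you flag at the end is not actually a gap: you need not check that every generator of $\tilde{\ca{D}}$ respects the full $F$-grading, since \eqref{equation;covering} already shows each $\tilde{\delta}_{\alpha,X}$ preserves the trivial summand, which is all that is required; and the identification of $K_T^*(X)$ with that summand is standard (it is what Snaith's lemma, and its algebraic analogue Lemma \ref{lemma;snaith-uma}, provide).
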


\begin{proof}
First assume that $\pi_1(G)$ is torsion-free.  Identify $K_T^*(X)$
with $R(T)\otimes_{R(G)}K_G^*(X)$ through the isomorphism of Theorem
\ref{theorem;kunneth}.  Then
$$\ca{E}(X)=\End_{K_G^*(X)}(K_T^*(X))=\ca{E}\otimes_{R(G)}K_G^*(X),$$
so the map $\ca{D}\to\ca{E}(X)$ defined by
$\Delta\mapsto\Delta\otimes1$, where $1$ is the identity automorphism
of $K_G^*(X)$, is a well-defined algebra homomorphism.  It follows
from \eqref{equation;long-symmetric} (applied to the group
$G=G_\alpha$) and from Lemma \ref{lemma;square} that
$\delta_{\alpha,X}=\delta_\alpha\otimes1$.  This proves that the
endomorphisms $\delta_{\alpha,X}$ generate a well-defined action of
$\ca{D}$ on $K_T^*(X)$.  If $\pi_1(G)$ is not torsion-free, we choose
a covering $\phi\colon\tilde{G}\to G$ of $G$ by a compact connected
$\tilde{G}$ such that $\pi_1(\tilde{G})$ is torsion-free.  (For
instance, we can take $\tilde{G}$ to be the direct product of a simply
connected group and a torus.)  According to a result of Snaith
\cite[Lemma~2.4]{snaith;kunneth}, the pullback map
$$
\phi^*\colon K_T^*(X)\longto K_{\tilde{T}}^*(X)
$$
is injective, where $\tilde{T}$ is the maximal torus $\phi^{-1}(T)$ of
$\tilde{G}$.  Let $\tilde{\delta}_{\alpha,X}
=\tilde{\jmath}_\alpha^*\tilde{\jmath}_{\alpha,*}$ be the operator on
$K_{\tilde{T}}^*(X)$ corresponding to $\alpha$, where
$\tilde{\jmath}_\alpha\colon\tilde{T}\to\tilde{G}_\alpha$ is the
inclusion.  It follows from the naturality properties of $j_\alpha^*$
and $j_{\alpha,*}$ that
\begin{equation}\label{equation;covering}
\phi^*\delta_{\alpha,X}=\tilde{\delta}_{\alpha,X}\phi^*.
\end{equation}
Recall from Lemma \ref{lemma;covering} that $\phi$ induces an
injective algebra homomorphism
$$
\bar{\phi}\colon\ca{D}=\bigoplus_wR(T)\partial_w\longto\tilde{\ca{D}}
=\bigoplus_wR(\tilde{T})\tilde{\partial}_w.
$$
We already know that the $\tilde{\delta}_{\alpha,X}$ generate a
well-defined $\tilde{\ca{D}}$-action on $K_{\tilde{T}}^*(X)$.  The
restriction of this action to the subalgebra $\ca{D}$ preserves the
submodule $K_T^*(X)$ and, because of \eqref{equation;covering}, the
elements $\delta_\alpha$ act in the required fashion.  The
$\ca{D}$-module structure on $K_T^*(X)$ so defined is unique because
the $\delta_\alpha$ and $R(T)$ generate the ring $\ca{D}$, and it
commutes with the $K_G^*(X)$-module structure because of Lemma
\ref{lemma;square}.  The naturality properties with respect to
equivariant maps and covering homomorphisms follow from the
corresponding properties of $j^*$ and $j_*$.
\end{proof}

We will write the product (with respect to the module structure given
by the proposition) of a class $a\in K_T^*(X)$ by an operator
$\Delta\in\ca{D}$ as $\Delta_X(a)$, or as $\Delta(a)$ when there is no
danger of ambiguity.  The formul{\ae} \eqref{equation;operator} and
\eqref{equation;operator'} translate into the identities
$$
s_\alpha=e^\alpha-(e^\alpha-1)\delta_\alpha
=1-(1-e^{-\alpha})\delta'_\alpha
$$
of operators on $K_T^*(X)$.

\subsection*{Hecke invariants versus Weyl invariants}

The next result means that a $T$-equi\-variant K-class on $X$ is
$G$-equi\-vari\-ant if and only if it is annihilated by the operators
$\partial'_w$ for all $w\ne1$.

\begin{theorem}\label{theorem;d}
The map $j^*$ is an isomorphism from $K_G^*(X)$ onto
$K_T^*(X)^{I(\ca{D})}$.
\end{theorem}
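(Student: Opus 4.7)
The strategy is to prove both inclusions $j^*(K_G^*(X)) \subseteq K_T^*(X)^{I(\ca{D})}$ and $K_T^*(X)^{I(\ca{D})} \subseteq j^*(K_G^*(X))$, noting that $j^*$ is already injective by Proposition \ref{proposition;atiyah}; combined, these give the isomorphism.

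The linchpin is a K-theoretic version of \eqref{equation;long-symmetric}, namely the identity $\partial_{w_0,X} = \delta_X = j^*j_*$ as endomorphisms of $K_T^*(X)$, where $\partial_{w_0,X}$ is the action of $\partial_{w_0}\in\ca{D}$ from Proposition \ref{proposition;lift}. I would establish this first. When $\pi_1(G)$ is torsion-free, Theorem \ref{theorem;kunneth} identifies $K_T^*(X)$ with $R(T)\otimes_{R(G)}K_G^*(X)$, and the construction in the proof of Proposition \ref{proposition;lift} shows $\delta_{\alpha,X}=\delta_\alpha\otimes 1$; composing along a reduced expression of $w_0$ yields $\partial_{w_0,X}=\partial_{w_0}\otimes 1$, which equals $(j^*j_*)\otimes 1$ by \eqref{equation;long-symmetric}, and this coincides with $j^*j_*$ on $R(T)\otimes_{R(G)}K_G^*(X)$ by $K_G^*(X)$-linearity of $j_*$. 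For general $G$ one passes through a covering $\phi\colon\tilde G\to G$ with $\pi_1(\tilde G)$ torsion-free, uses Snaith's injectivity of $\phi^*\colon K_T^*(X)\to K_{\tilde T}^*(X)$, and checks $\phi^*\delta_X=\tilde\delta_X\phi^*$ and $\phi^*\partial_{w_0,X}=\tilde\partial_{w_0,X}\phi^*$ from naturality of push/pull and Lemma \ref{lemma;covering}.

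With this identity at hand, both inclusions fall out quickly. For the forward inclusion, take $b\in K_G^*(X)$: Lemma \ref{lemma;square} gives $\partial_{w_0,X}(j^*(b))=\delta_X(j^*(b))=j^*(b)$, and for any $\Delta\in I(\ca{D})$, Lemma \ref{lemma;augmentation} says $\Delta\circ\partial_{w_0}=0$ in $\ca{D}$, so $\Delta(j^*(b))=\Delta(\partial_{w_0,X}(j^*(b)))=0$. For the reverse inclusion, let $a\in K_T^*(X)^{I(\ca{D})}$ and apply Lemma \ref{lemma;left-inverse}\eqref{item;left-inverse} with $u_0=1$, which is legitimate since $\partial_{w_0}(1)=j^*j_*(1)=1$ by \eqref{equation;push-pull-weyl}; the projector $\pi=\partial_{w_0}$ restricts to the identity on $K_T^*(X)^{I(\ca{D})}$, so $\partial_{w_0,X}(a)=a$, i.e.\ $j^*j_*(a)=a$, whence $a=j^*(j_*(a))\in j^*(K_G^*(X))$. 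The step I expect to require the most care is the opening identification $\partial_{w_0,X}=j^*j_*$ in the non-torsion-free case: the naturality bookkeeping under the covering $\phi$ is the only place where any real work occurs, since once this identity is in hand the rest is immediate from Lemmas \ref{lemma;augmentation}, \ref{lemma;square}, and \ref{lemma;left-inverse}.
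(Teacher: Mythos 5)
Your proof is correct, and it takes a genuinely different route from the one in the paper. The paper's argument (in the torsion-free case) is Morita-theoretic: it invokes the Pittie--Steinberg theorem to show $R(T)$ is a progenerator of $R(G)$-$\lie{Mod}$, invokes Proposition \ref{proposition;endomorphism}\eqref{item;algebras} (which ultimately rests on the Kazhdan--Lusztig nonsingularity of the pairing $\ca{P}$) to identify $\ca{D}$ with the full endomorphism ring $\ca{E}=\End_{R(G)}(R(T))$, and then applies the first Morita equivalence theorem to obtain a categorical equivalence $R(G)\text{-}\lie{Mod}\simeq\ca{D}\text{-}\lie{Mod}$, identifying the inverse functor $\Hom_{\ca{D}}(R(T),-)$ with the invariants functor $(-)^{I(\ca{D})}$ via evaluation at $1$. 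Your argument instead establishes the explicit identity $\partial_{w_0,X}=j^*j_*$ (which the paper proves only afterwards, in the proof of Theorem \ref{theorem;weyl}), and then uses Lemma \ref{lemma;augmentation} for the forward inclusion and Lemma \ref{lemma;left-inverse}\eqref{item;left-inverse} for the reverse inclusion, together with Atiyah's $j_*j^*=\id$. This effectively reverses the paper's logical order: you prove the projection formula first and deduce Theorem \ref{theorem;d} from it, whereas the paper proves Theorem \ref{theorem;d} first by Morita theory and derives Theorem \ref{theorem;weyl} as a corollary. Your route is more elementary and requires slightly fewer inputs --- it never needs $\ca{D}=\ca{E}$, hence bypasses the Kazhdan--Lusztig nonsingularity result --- while the paper's route buys the additional structural insight that $K_T^*(X)\cong R(T)\otimes_{R(G)}K_G^*(X)$ implements a full equivalence of module categories. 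Both proofs require the K\"unneth theorem, the Kostant--Kumar basis theorem (the latter implicitly via Lemma \ref{lemma;invariant} inside Lemma \ref{lemma;left-inverse}), and the same covering trick with Snaith's injectivity to handle the case of torsion in $\pi_1(G)$.
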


\begin{proof}
First assume that $\pi_1(G)$ is torsion-free.  Let
$\lie{A}=\ca{D}$-$\lie{Mod}$ and $\lie{B}=R(G)$-$\lie{Mod}$ be the
categories of (left) modules over the rings $\ca{D}$, resp.\ $R(G)$.
By the Pittie-Steinberg theorem, the $R(G)$-module $R(T)$ is free, and
therefore it is a progenerator of the category $\lie{B}$.  By
Proposition \ref{proposition;endomorphism}\eqref{item;algebras},
$\ca{D}$ is the full endomorphism algebra of $R(T)$.  Hence, by the
first Morita equivalence theorem (see e.g.\
\cite[\S\,18]{lam;modules-rings}), the functor
$\lie{G}\colon\lie{B}\to\lie{A}$ defined by
$$B\longmapsto R(T)\otimes_{R(G)}B$$
is an equivalence with inverse $\lie{F}\colon\lie{A}\to\lie{B}$ given
by
$$A\longmapsto\Hom_{\ca{D}}(R(T),A).$$
We assert that $\lie{F}$ is naturally isomorphic to the functor
$\lie{I}\colon\lie{A}\to\lie{B}$ given
by
$$A\longmapsto A^{I(\ca{D})}.$$
Indeed, define the natural $R(G)$-linear map
$\Phi_A\colon\lie{F}(A)\to A$ by $\Phi_A(f)=f(1)$.  The map $\Phi_A$
is injective because $\ca{D}\supset R(T)$.  By $\ca{D}$-linearity,
$$\Delta(\Phi_A(f))=\Delta(f(1))=f(\Delta(1))=0$$
for all $f\in\lie{F}(A)$ and $\Delta\in\ca{D}$, so $\Phi_A(f)\in
\lie{I}(A)$.  For $a\in\lie{I}(A)$ define $f_a\colon R(T)\to A$ by
$f_a(u)=ua$.  It follows from Lemma \ref{lemma;invariant} that
$f_a\in\lie{F}(A)$, and clearly $\Phi_A(f_a)=a$.  Therefore the image
of $\Phi_A$ is equal to $\lie{I}(A)$.  We conclude that $\Phi$ is a
natural isomorphism from $\lie{F}$ to $\lie{I}$.  Now consider the
$\ca{D}$-module $A=K_T^*(X)$ and the $R(G)$-module $B=K_G^*(X)$.  By
Theorem \ref{theorem;kunneth}, $A\cong\lie{G}(B)$.  Hence
$$B\cong\lie{F}(A)\cong\lie{I}(A)=A^{I(\ca{D})}.$$
If $\pi_1(G)$ is not torsion-free, we choose a covering group
$\phi\colon\tilde{G}\to G$ as in the proof of Proposition
\ref{proposition;lift} and consider the diagram (cf.\
\cite[\S\,4]{mcleod;kunneth-equivariant})
$$
\xymatrix{
K_T^*(X)\ar[r]^{\phi^*}\ar@<0.5ex>[d]^{j_*}&
K_{\tilde{T}}^*(X)\ar@<0.5ex>[d]^{\tilde{\jmath}_*}\\
K_G^*(X)\ar[r]^{\phi^*}\ar@<0.5ex>[u]^{j^*}&
K_{\tilde{G}}^*(X)\ar@<0.5ex>[u]^{\tilde{\jmath}^*},
}
$$
where we have $\phi^*\circ j^*=\tilde{\jmath}^*\circ\phi^*$ and
$\tilde{\jmath}_*\circ\phi^*=\phi^*\circ j_*$.  We know that $j_*$ is
a left inverse of $j^*$ (Proposition \ref{proposition;atiyah}), that
the image of $\tilde{\jmath}^*$ is the submodule
$\tilde{M}=K_{\tilde{T}}^*(X)^{I(\tilde{\ca{D}})}$ of
$K_{\tilde{T}}^*(X)$ and that $\tilde{\jmath}_*|\tilde{M}$ is a
two-sided inverse of $\tilde{\jmath}^*\colon
K_{\tilde{G}}^*(X)\to\tilde{M}$.  It follows from Lemma
\ref{lemma;covering} that $\phi^*$ maps $M=K_T^*(X)^{I(\ca{D})}$ to
$\tilde{M}$.  As noted in the proof of Proposition
\ref{proposition;lift}, the map $K_T^*(X)\to K_{\tilde{T}}^*(X)$ is
injective, and hence so is the map $K_G^*(X)\to K_{\tilde{G}}^*(X)$.
An easy diagram chase now shows that $j^*(K_G^*(X))=M$.
\end{proof}

An immediate consequence is the following statement, which for a space
consisting of a single point amounts to the Weyl character formula.

\begin{theorem}\label{theorem;weyl}
For every $a\in K_T^*(X)$ there exists a unique $b\in K_G^*(X)$ such
that $j^*(b)=\partial_{w_0}(a)$.
\end{theorem}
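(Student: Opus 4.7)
The plan is to deduce this theorem as an immediate corollary of Theorem \ref{theorem;d}, which already identifies $j^*(K_G^*(X))$ with the subgroup $K_T^*(X)^{I(\ca{D})}$ of Hecke-invariant classes. Granting that identification, both existence and uniqueness in Theorem \ref{theorem;weyl} will collapse to a single point: the element $\partial_{w_0}(a)$ always lies in $K_T^*(X)^{I(\ca{D})}$. Uniqueness of $b$ is then automatic from the injectivity of $j^*$ (Proposition \ref{proposition;atiyah}), while existence is the surjection of $j^*$ onto the Hecke invariants.

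The key step is to verify the Hecke invariance of $\partial_{w_0}(a)$. For this I would invoke Lemma \ref{lemma;augmentation}, which pinpoints $I(\ca{D})$ as the left annihilator of $\partial_{w_0}$ inside $\ca{D}$. Since $K_T^*(X)$ carries a $\ca{D}$-module structure by Proposition \ref{proposition;lift}, for any $\Delta\in I(\ca{D})$ we can compute
\[
\Delta\bigl(\partial_{w_0}(a)\bigr)=(\Delta\circ\partial_{w_0})(a)=0.
\]
Hence $\partial_{w_0}(a)\in K_T^*(X)^{I(\ca{D})}$, and Theorem \ref{theorem;d} supplies the unique $b\in K_G^*(X)$ with $j^*(b)=\partial_{w_0}(a)$.

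As a bonus, one can identify $b$ concretely as $j_*(a)$. From \eqref{equation;push-pull} we have $j^*j_*(a)=\delta_X(a)$, and from \eqref{equation;long-symmetric} together with the definition of the $\ca{D}$-action in Proposition \ref{proposition;lift}, the endomorphism $\delta_X$ of $K_T^*(X)$ coincides with the action of $\partial_{w_0}\in\ca{D}$ (directly in the torsion-free case via the K\"unneth identification, and by the covering argument otherwise). Thus $j^*(j_*(a))=\partial_{w_0}(a)$, so the unique $b$ is nothing but $j_*(a)$.

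Honestly, there is no serious obstacle in the argument itself: essentially all the work has been loaded into the preceding results. The only substantive ingredient is Theorem \ref{theorem;d}, whose proof combined the Morita-type equivalence coming from Proposition \ref{proposition;endomorphism} with a diagram chase through a torsion-free covering $\tilde G\to G$. Once that is in hand, Theorem \ref{theorem;weyl} is a one-line consequence of Lemma \ref{lemma;augmentation}.
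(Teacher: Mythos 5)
Your proof is correct and follows essentially the same route as the paper's. The only cosmetic difference is that you invoke Lemma \ref{lemma;augmentation} directly to verify $\partial_{w_0}(a)\in K_T^*(X)^{I(\ca{D})}$, whereas the paper cites Lemma \ref{lemma;left-inverse}\eqref{item;left-inverse} (with $u_0=1$), whose proof itself rests on Lemma \ref{lemma;augmentation}; both then close the argument with Theorem \ref{theorem;d} and identify $b=j_*(a)$ via $\partial_{w_0}=\delta_X=j^*j_*$.
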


\begin{proof}
It follows from \eqref{equation;long-symmetric} that $\partial_{w_0}$
acts on $K_T^*(X)$ as the operator $\delta_X$ defined in
\eqref{equation;push-pull}.  Taking $A=K_T^*(X)$ and $u_0=1$ in Lemma
\ref{lemma;left-inverse}\eqref{item;left-inverse}, we see that
$\partial_{w_0}$ projects $K_T^*(X)$ onto $K_T^*(X)^{I(\ca{D})}$.  Now
let $a\in K_T^*(X)$.  Then, by Theorem \ref{theorem;d}, $j_*(a)$ is
the unique class $b\in K_G^*(X)$ satisfying
$j^*(b)=\partial_{w_0}(a)$.
\end{proof}

\begin{example}\label{example;mcleod}
This example, which generalizes
\cite[Remark~4.5]{mcleod;kunneth-equivariant}, shows that in general
$j^*$ is not an isomorphism onto $K_T^*(X)^W$, not even if $G$ is
simply connected.  Let $G=\SU(2)$ and let $X$ be any $G$-space.  Let
$T$ be the diagonal maximal torus of $G$ and let $\varpi\in\X(T)$ be
the fundamental weight of $G$ defined by
$\varpi\bigl(\mskip-\thinmuskip
\begin{smallmatrix}t&0\\0&t^{-1}\end{smallmatrix}
\mskip-\thinmuskip\bigr)=t$.  Then $\alpha=2\varpi$ is the
corresponding simple root, and
$$R(T)\cong\Z[x,x^{-1}],\qquad R(G)\cong\Z[x+x^{-1}],$$
where $x=e^\varpi$.  The elements $1$ and $x$ are a basis of the
$R(G)$-module $R(T)$.  Relative to this basis the simple reflection
$s_\alpha=w_0$ and the operator $\delta'_\alpha=\partial'_{w_0}$ are
given by the matrices
\begin{equation}\label{equation;matrices}
s_\alpha=\begin{pmatrix}1&x+x^{-1}\\0&-1\end{pmatrix},\qquad
\delta'_\alpha=\begin{pmatrix}0&0\\0&1\end{pmatrix}.
\end{equation}
By the K\"unneth theorem,
$$K_T^*(X)\cong R(T)\otimes_{R(G)}K_G^*(X),$$
so the classes $1\otimes1$ and $x\otimes1$ form a basis of the
$K_G^*(X)$-module $K_T^*(X)$, relative to which $s_\alpha$ and
$\delta'_\alpha$ act again as the matrices \eqref{equation;matrices}.
Expressing an arbitrary class $a\in K_T^*(X)$ as $a=b_1+b_2x$ with
$b_1$, $b_2\in K_G^*(X)$, we find by straightforward calculation
\begin{gather*}
K_T^*(X)^W=\{\,b_1+b_2x\mid2b_2=(x+x^{-1})b_2=0\,\}
=K_G^*(X)\cdot1\oplus K_G^*(X)^I\cdot x,\\
K_T^*(X)^{I(\ca{D})}=\{\,b_1+b_2x\mid b_2=0\,\}=K_G^*(X)\cdot1.
\end{gather*}
Here $I$ denotes the ideal of $R(G)$ generated by $2$ and $x+x^{-1}$,
and $K_G^*(X)^I$ denotes the $R(G)$-submodule of $K_G^*(X)$
annihilated by $I$.  Thus $K_T^*(X)^W=j^*(K_G^*(X))$ if and only if
$K_G^*(X)$ contains no $I$-torsion elements.  In particular, if $X$ is
a principal $G$-bundle with base $Y=X/G$ such that $K^*(Y)$ has
$2$-torsion (e.g.\ $Y=\P^n(\R)$ and $X=G\times Y$), then
$K_T^*(X)^W\ne j^*(K_G^*(X))$.
\end{example}

We close this section by stating some criteria for $K_G^*(X)$ to be
isomorphic to $K_T^*(X)^W$.

\begin{theorem}\label{theorem;abelian}
\begin{enumerate}
\item\label{item;zero-divisor}
Assume that the Weyl denominator
$\d=\prod_{\alpha\in\ca{R}^+}(1-e^{-\alpha})$ is not a zero divisor in
the $R(T)$-module $K_T^*(X)$.  Then $j^*$ is an isomorphism from
$K_G^*(X)$ onto $K_T^*(X)^W$.
\item\label{item;weyl-order}
Let $\k$ be a commutative ring in which $\abs{W}$ is invertible, such
as $\Z\bigl[\abs{W}^{-1}\bigr]$ or $\Q$.  Then $j^*$ is an isomorphism
from $K_G^*(X)\otimes\k$ onto $(K_T^*(X)\otimes\k)^W$.
\end{enumerate}
\end{theorem}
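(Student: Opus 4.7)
The plan is to combine Theorem \ref{theorem;d}, which identifies $j^*(K_G^*(X))$ with the Hecke-invariant subgroup $K_T^*(X)^{I(\ca{D})}$, with Lemma \ref{lemma;discriminant}, which isolates the two mechanisms by which Hecke invariants collapse to Weyl invariants. Both parts will then essentially follow by chasing these equalities.

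For \eqref{item;zero-divisor}, I would first invoke Proposition \ref{proposition;lift} to regard $K_T^*(X)$ as a left $\ca{D}$-module. The Weyl-denominator hypothesis then feeds directly into Lemma \ref{lemma;discriminant}\eqref{item;weyl} (with $\k=\Z$) to give $K_T^*(X)^W = K_T^*(X)^{I(\ca{D})}$, and Theorem \ref{theorem;d} identifies the right-hand side with $j^*(K_G^*(X))$.

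For \eqref{item;weyl-order} slightly more care is needed, since Theorem \ref{theorem;d} is stated integrally whereas the assertion lives over $\k$. Set $A = K_T^*(X)\otimes\k$, regarded as a $\ca{D}_\k$-module; since $|W|$ is invertible in $\k$, Lemma \ref{lemma;discriminant}\eqref{item;weyl-invertible} immediately yields $A^W = A^{I(\ca{D}_\k)}$. The remaining task is a $\k$-coefficient analogue of Theorem \ref{theorem;d}, namely the identification of $A^{I(\ca{D}_\k)}$ with $j^*(K_G^*(X)\otimes\k)$. For this, I would combine Theorem \ref{theorem;d} with Lemma \ref{lemma;left-inverse}\eqref{item;left-inverse} (taking $u_0=1$) to conclude that $\partial_{w_0}$ is a $\Z$-linear projection of $K_T^*(X)$ onto $j^*(K_G^*(X))$. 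Such a projection decomposes $K_T^*(X)$ as an internal direct sum of abelian groups and therefore survives base change, so $\partial_{w_0}\otimes 1$ is a projection of $A$ with image $j^*(K_G^*(X))\otimes\k = j^*(K_G^*(X)\otimes\k)$. By Remark \ref{remark;extension}, that same image equals $A^{I(\ca{D}_\k)}$, closing the loop.

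The only conceptual point deserving attention is the base-change compatibility in \eqref{item;weyl-order}; the machinery of Remark \ref{remark;extension}, combined with the observation that $\partial_{w_0}$ is already a projection integrally, handles it cleanly, so I do not anticipate a serious obstacle.
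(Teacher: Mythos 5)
Your proposal is correct and follows essentially the same route as the paper: both parts reduce to combining Theorem \ref{theorem;d} with Lemma \ref{lemma;discriminant} (the paper's proof is literally the sentence ``This follows immediately from Theorem \ref{theorem;d} and Lemma \ref{lemma;discriminant}''). Your treatment of part \eqref{item;weyl-order} is a bit more explicit than the paper's terse statement: you correctly observe that Theorem \ref{theorem;d} is stated integrally, and you supply the needed base-change step by noting that $\partial_{w_0}$ is an idempotent with image $j^*(K_G^*(X))$, so the image of $\partial_{w_0}\otimes 1$ on $K_T^*(X)\otimes\k$ is $j^*(K_G^*(X))\otimes\k = j^*(K_G^*(X)\otimes\k)$ (the last equality holding because $j^*$ is a split injection), and Remark \ref{remark;extension} identifies that same image with the Hecke invariants over $\k$. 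This is exactly the reasoning the paper leaves implicit, so there is no gap.
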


\begin{proof}
This follows immediately from Theorem \ref{theorem;d} and Lemma
\ref{lemma;discriminant}.
\end{proof}

\begin{corollary}\label{corollary;abelian-torsion}
The map $j^*\colon K_G^*(X)\to K_T^*(X)^W$ is an isomorphism in each
of the following cases.
\begin{enumerate}
\item\label{item;torsion-free}
$K_T^*(X)$ is a torsion-free $R(T)$-module.
\item\label{item;injective}
The restriction homomorphism $K_T^*(X)\to K_T^*(X^T)$ is injective.
\item\label{item;symplectic}
$X$ is a compact Hamiltonian $G$-manifold.
\item\label{item;projective}
$X$ is a nonsingular complex projective variety on which $G$ acts by
linear transformations.
\end{enumerate}
\end{corollary}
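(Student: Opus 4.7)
The plan is to reduce each of the four cases to Theorem \ref{theorem;abelian}\eqref{item;zero-divisor}, which yields the desired isomorphism once the Weyl denominator $\d = \prod_{\alpha\in\ca{R}^+}(1-e^{-\alpha})$ acts injectively on $K_T^*(X)$. Case \eqref{item;torsion-free} is immediate: $R(T) = \Z[\X(T)]$ is a Laurent polynomial ring, hence an integral domain, and $\d$ is a nonzero element, so it is a non-zero-divisor on any torsion-free $R(T)$-module.

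For case \eqref{item;injective}, it suffices to show that $\d$ is a non-zero-divisor on $K_T^*(X^T)$, to which the injective restriction identifies $K_T^*(X)$. Since $T$ acts trivially on $X^T$, one has $K_T^*(X^T) \cong R(T) \otimes_\Z K^*(X^T)$. The key auxiliary claim is that for each root $\alpha$, the quotient $R(T)/(1-e^{-\alpha})$ is a free $\Z$-module. Writing $\alpha = k\beta$ with $\beta \in \X(T)$ primitive and splitting $\X(T) = \Z\beta \oplus \Lambda$, this quotient becomes $\Z[\Lambda]\otimes_\Z\Z[y]/(y^k-1)$, which is visibly $\Z$-free with basis indexed by $\Lambda \times \{0,1,\dots,k-1\}$. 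Consequently $\Tor_1^\Z\bigl(R(T)/(1-e^{-\alpha}),\, K^*(X^T)\bigr) = 0$, so each factor $1-e^{-\alpha}$ acts injectively on $R(T) \otimes_\Z K^*(X^T)$, and therefore so does the product $\d$.

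Cases \eqref{item;symplectic} and \eqref{item;projective} will then be reduced to the two already treated. For a compact Hamiltonian $G$-manifold, the K-theoretic analogue of the Kirwan injectivity theorem asserts that the restriction $K_T^*(X) \to K_T^*(X^T)$ is injective, reducing \eqref{item;symplectic} to \eqref{item;injective}. For a nonsingular complex projective $G$-variety with linear action, the Bialynicki-Birula decomposition associated to a generic cocharacter of $T$ produces a $T$-stable filtration of $X$ whose strata are affine bundles over the components of $X^T$; the corresponding filtration of $K_T^*(X)$ has successive quotients which are free $R(T)$-modules, making $K_T^*(X)$ itself a free $R(T)$-module and reducing \eqref{item;projective} to \eqref{item;torsion-free}.

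The main obstacle I foresee is case \eqref{item;symplectic}: the K-theoretic Kirwan injectivity theorem for Hamiltonian manifolds is a nontrivial result and must be imported from outside the immediate framework of this paper. By contrast, \eqref{item;torsion-free} is formal, the Tor-vanishing underlying \eqref{item;injective} is elementary, and \eqref{item;projective} follows from the classical Bialynicki-Birula paving together with the invariance of equivariant K-theory under affine bundles.
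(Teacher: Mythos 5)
Your treatment of cases \eqref{item;torsion-free}--\eqref{item;symplectic} agrees with the paper's in substance. For \eqref{item;torsion-free} the paper likewise just notes $\d\neq0$ in the domain $R(T)$. For \eqref{item;injective} the paper cites Bourbaki \S\,VI.3.2 for the fact that $1-e^\alpha$ is not a zero divisor in $A[\X(T)]$ for any coefficient ring $A$; your $\Tor$-vanishing argument via the $\Z$-freeness of $R(T)/(1-e^{-\alpha})$ is a clean elementary proof of the same fact, and it is important that it works even when $A=K^*(X^T)$ has $\Z$-torsion. For \eqref{item;symplectic} both you and the paper invoke the K-theoretic Kirwan injectivity theorem of Harada--Landweber and reduce to \eqref{item;injective}.

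Your case \eqref{item;projective}, however, has a genuine gap, and it is instructive to see why the paper takes a different route. You propose to reduce \eqref{item;projective} to \eqref{item;torsion-free} by arguing that the Bialynicki-Birula filtration has successive quotients that are free $R(T)$-modules, whence $K_T^*(X)$ is free. But the BB strata are affine bundles over the components $F_i$ of $X^T$, which are smooth projective varieties on which $T$ acts trivially, so the quotients are (up to Thom shift) $K_T^*(F_i)\cong R(T)\otimes_\Z K^*(F_i)$. Nothing forces $K^*(F_i)$ to be $\Z$-free: for example, take $G=\SU(2)$ acting in the standard way on the first factor of $\P^1\times Y\hookrightarrow\P^N$ (Segre embedding) with $Y$ a smooth projective variety having $2$-torsion in $K^0$ (e.g.\ an Enriques surface); then $X^T\cong Y\sqcup Y$ and $K_T^*(X^T)$ has $2$-torsion, hence $R(T)$-torsion, so neither these quotients nor $K_T^*(X)$ is a torsion-free $R(T)$-module, and your reduction to \eqref{item;torsion-free} fails. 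A second, unaddressed, point is that the long exact sequences of the BB filtration must be shown to split, which is precisely the Kirwan-surjectivity input; without it one does not even have short exact sequences to induct along. If you patch the first issue by replacing ``free $R(T)$-module'' with ``$\d$-torsion-free'' (using your own $\Tor$ argument from \eqref{item;injective} applied to each $F_i$) and the second by carrying out the Atiyah--Bott splitting argument, you would essentially be reproving the Harada--Landweber theorem and then applying Theorem~\ref{theorem;abelian}\eqref{item;zero-divisor} directly -- i.e.\ you would have reproduced case \eqref{item;injective} rather than case \eqref{item;torsion-free}. The paper sidesteps all this with a one-line observation: a nonsingular subvariety of $\P^n(\C)$ stable under a subgroup $G\subset\U(n+1)$ is a compact Hamiltonian $G$-manifold, so \eqref{item;projective} is literally a special case of \eqref{item;symplectic}.
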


\begin{proof}
Case \eqref{item;torsion-free} follows immediately from Theorem
\ref{theorem;abelian}\eqref{item;zero-divisor}.  Let $A=K^*(X^T)$.
Then the algebra $K_T^*(X^T)$ is isomorphic to the group algebra
$$K_T^*(X^T)\cong A\otimes_\Z R(T)\cong A[\X(T)].$$
It follows from \cite[\S\,VI.3.2]{bourbaki;groupes-algebres} that
$1-e^\alpha$ is not a zero divisor in $A[\X(T)]$ for any root
$\alpha$.  Therefore case \eqref{item;injective} is also a consequence
of Theorem \ref{theorem;abelian}\eqref{item;zero-divisor}.  According
to \cite[Theorem~2.5]{harada-landweber;k-theory-abelian-symplectic},
the assumption of case \eqref{item;injective} is satisfied in case
\eqref{item;symplectic}, so case \eqref{item;symplectic} follows from
case \eqref{item;injective}.  Finally, case \eqref{item;projective}
follows from case \eqref{item;symplectic}, because a nonsingular
subvariety of $\P^n(\C)$ which is stable under the action of a
subgroup $G$ of $\U(n+1)$ is a Hamiltonian $G$-manifold.  (See e.g.\
\cite[\S\,2]{kirwan;cohomology-quotients-symplectic}.)
\end{proof}

\begin{example}\label{example;flag}
Let $X$ be the flag variety $G/T$.  Then $K_G^*(X)\cong K_{G\times
T}^*(G)\cong R(T)$.  To compute $K_T^*(X)$, let us assume that
$\pi_1(G)$ is torsion-free.  Then, by the K\"unneth theorem,
\begin{equation}\label{equation;flag}
K_T^*(X)\cong R(T)\otimes_{R(G)}K_G^*(X)\cong
R(T)\otimes_{R(G)}R(T)\cong\bigl(R(T)\otimes_\Z R(T)\bigr)\big/I,
\end{equation}
where $I$ is the ideal of $R(T)\otimes_\Z R(T)$ generated by all
elements of the form $b\otimes1-1\otimes b$ with $b\in R(G)$.  The
isomorphism \eqref{equation;flag} is Weyl equivariant with respect to
the $W$-action on $R(T)\otimes_\Z R(T)$ given by $w(a_1\otimes
a_2)=w(a_1)\otimes a_2$.  The inclusion $j^*\colon K_G^*(X)\to
K_T^*(X)$ is induced by the map $R(T)\to R(T)\otimes_\Z R(T)$ which
sends $a$ to $1\otimes a$.  We now conclude from \eqref{equation;flag}
and the fact that $R(T)$ is a free $R(G)$-module that
$$
K_T^*(X)^W\cong\bigl(R(T)\otimes_{R(G)}R(T)\bigr)^W\cong
R(T)^W\otimes_{R(G)}R(T)\cong R(T)\cong K_G^*(X).
$$
Since the flag variety is complex projective, this follows also from
Corollary \ref{corollary;abelian-torsion}\eqref{item;projective}.  The
isomorphism \eqref{equation;flag} is $\ca{D}$-linear with respect to
the $\ca{D}$-action on $R(T)\otimes_\Z R(T)$ given by
$\Delta(a_1\otimes a_2)=\Delta(a_1)\otimes a_2$ (which preserves the
ideal $I$).  Therefore the projection onto the Weyl invariants
\begin{equation}\label{equation;projection}
\partial_{w_0}\colon R(T)\otimes_{R(G)}R(T)\longto R(T)
\end{equation}
is given by $\partial_{w_0}(a_1\otimes a_2)=\partial_{w_0}(a_1)a_2$.
\end{example}

\begin{example}\label{example;su2}
As a special case of Example \ref{example;flag}, consider $G=\SU(2)$.
Then $X=\P^1(\C)$.  Choose the maximal torus $T$ and the fundamental
weight $\varpi$ as in Example \ref{example;mcleod}.  Then
$R(T)\otimes_\Z R(T)\cong\Z[x,y,(xy)^{-1}]$, where $x$ and $y$ are two
copies of the generator $e^\varpi$ of $R(T)$.  The isomorphism
\eqref{equation;flag} allows us to think of classes in $K_T^*(X)$ as
cosets in $\Z[x,y,(xy)^{-1}]$ of the ideal $I$.  Under this
identification, the ring $K_G^*(X)\cong K_T^*(X)^W$ corresponds to the
subring $\Z[y,y^{-1}]$ of $\Z[x,y,(xy)^{-1}]/I$.  By definition, $I$
is generated by all elements of the form $f(x)-f(y)$, where $f\in
R(T)^W$ is a Weyl invariant Laurent polynomial in one variable.  It
follows that $I$ is generated by the single element
$x+x^{-1}-(y+y^{-1})$.  Using \eqref{equation;projection} we can
calculate the projection of the coset $x^ky^l+I$ onto $\Z[y,y^{-1}]$
for $k$, $l\in\Z$.  We have $\partial_{w_0}=\delta_\alpha$, where
$\alpha=2\varpi$ is the simple root, so
$$
\partial_{w_0}(x^ky^l+I)=\delta_\alpha(y^k)y^l=
\begin{cases}
(y^k+y^{k-2}+\cdots+y^{-k})y^l&\text{if $k\ge0$}\\
0&\text{if $k=-1$}\\
-(y^{k+2}+y^{k+4}+\cdots+y^{-k-2})y^l&\text{if $k\le-2$.}
\end{cases}
$$
\end{example}

\section{Relative duality}\label{section;duality}

As in Section \ref{section;push-pull}, $G$ denotes a compact connected
Lie group with maximal torus $T$ and Weyl group $W$, and $X$ denotes a
compact $G$-space.  Choose a basis of the root system of $(G,T)$ and
let $j_*\colon K_T^*(X)\to K_G^*(X)$ be the corresponding pushforward
homomorphism.  The pairing $\ca{P}$ defined in
\eqref{equation;pairing} generalizes to a bi-additive pairing
$\ca{P}_X\colon K_T^*(X)\times K_T^*(X)\to K_G^*(X)$ defined by
$$\ca{P}_X(a_1,a_2)=j_*(a_1a_2)$$
for $a_1$, $a_2\in K_T^*(X)$.  This pairing is graded symmetric in the
sense that
$$\ca{P}_X(a_1,a_2)=(-1)^{k_1k_2}\ca{P}_X(a_2,a_1)$$
if $a_1$ is of degree $k_1$ and $a_2$ is of degree $k_2$.  It is
bilinear over $K_G^*(X)$ in the sense that
$$
\ca{P}_X(j^*(b)a_1,a_2)=b\ca{P}_X(a_1,a_2),\qquad
\ca{P}_X(a_1,a_2j^*(b))=\ca{P}_X(a_1,a_2)b
$$
for all $b\in K_G^*(X)$.  It follows from the naturality of $j_*$ that
\begin{equation}\label{equation;natural-pairing}
\ca{P}_X(f^*(a_1),f^*(a_2))=f^*\ca{P}_Y(a_1,a_2)
\end{equation} 
for $a_1$, $a_2\in K_T^*(Y)$, where $f\colon X\to Y$ is any
$G$-equivariant continuous map.

Identifying $K_T^*(X)$ with $K_G^*(X\times G/T)$, we can view the
pairing $\ca{P}_X$ as a fibrewise intersection product in
$G$-equivariant K-theory for the projection map $X\times G/T\to X$,
and the following proposition as a duality theorem for this map.

\begin{proposition}\label{proposition;orthogonal}
Assume that $\pi_1(G)$ is torsion-free.  Then the pairing $\ca{P}_X$
is nonsingular.  Hence
$$
K_T^*(X)\cong\Hom_{K_G^*(X)}(K_T^*(X),K_G^*(X))
$$
as $\Z/2\Z$-graded left $K_G^*(X)$-modules.
\end{proposition}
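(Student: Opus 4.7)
The plan is to reduce the nonsingularity of $\ca{P}_X$ to the point case, namely the nonsingularity of $\ca{P}\colon R(T)\times R(T)\to R(G)$ established in Proposition \ref{proposition;endomorphism}\eqref{item;dual}, by base change via the K\"unneth isomorphism. Throughout, set $M=K_G^*(X)$.

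First, I would use Theorem \ref{theorem;kunneth} to identify $K_T^*(X)$ with $R(T)\otimes_{R(G)}M$ as a $\Z/2\Z$-graded $R(T)$-algebra, placing $R(T)$ in degree~$0$. Under this identification an elementary tensor $u\otimes b$ corresponds to $u\cdot j^*(b)\in K_T^*(X)$, so the product is $(u_1\otimes b_1)(u_2\otimes b_2)=u_1 u_2\otimes b_1 b_2$.

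Second, I would show that $\ca{P}_X$ is the base change of $\ca{P}$ along $R(G)\to M$. The $K_G^*(X)$-linearity of $j_*$, namely $j_*(j^*(c)a)=c\,j_*(a)$ (the K-theoretic analogue of \eqref{equation;linear}), gives
$$\ca{P}_X(u_1\otimes b_1,u_2\otimes b_2)=j_*\bigl(u_1 u_2\cdot j^*(b_1 b_2)\bigr)=j_*(u_1 u_2)\cdot b_1 b_2=\ca{P}(u_1,u_2)\cdot b_1 b_2.$$

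Third, I would compute the target. By Pittie--Steinberg, $R(T)$ is a finitely generated free $R(G)$-module, so the natural evaluation map
$$R(T)^\vee\otimes_{R(G)}M\longto\Hom_M\bigl(R(T)\otimes_{R(G)}M,\,M\bigr),\qquad\phi\otimes b\longmapsto\bigl(u\otimes c\mapsto\phi(u)\,bc\bigr),$$
is an isomorphism of $\Z/2\Z$-graded $M$-modules. Combined with the previous step, this identifies $\ca{P}_X^\sharp$ with $\ca{P}^\sharp\otimes 1_M\colon R(T)\otimes_{R(G)}M\to R(T)^\vee\otimes_{R(G)}M$. Since $\ca{P}^\sharp$ is an isomorphism by Proposition \ref{proposition;endomorphism}\eqref{item;dual}, so is its base change, and hence so is $\ca{P}_X^\sharp$.

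The main obstacle is the second step: one must check carefully, using $K_G^*(X)$-linearity of the pushforward together with the multiplicative structure on $K_T^*(X)$ transported from the K\"unneth isomorphism, that $\ca{P}_X$ really is $\ca{P}\otimes_{R(G)}$ (multiplication on $M$). Once this compatibility is nailed down, the nonsingularity of $\ca{P}_X$ and the claimed isomorphism of $K_G^*(X)$-modules are formal consequences of base change from the point case. A secondary point to verify is that the grading is respected, which is automatic since $R(T)$ sits in degree $0$ and $j_*$ preserves $\Z/2\Z$-degree.
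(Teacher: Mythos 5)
Your proof is correct and takes essentially the same route as the paper's: reduce to the point case by viewing $K_T^*(X)$ as $R(T)\otimes_{R(G)}K_G^*(X)$ via the K\"unneth theorem, identify $\ca{P}_X$ with the base change of $\ca{P}$ using $K_G^*(X)$-linearity of $j_*$, and invoke the nonsingularity of $\ca{P}$ over $R(G)$. The paper phrases the same argument concretely by pulling back a $\ca{P}$-dual pair of $R(G)$-bases of $R(T)$ along $p\colon X\to\pt$ and checking via the naturality identity \eqref{equation;natural-pairing} that they remain dual for $\ca{P}_X$, which is just your base-change step made explicit.
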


\begin{proof}
Let $(u_w)_{w\in W}$ be a basis of the $R(G)$-module $R(T)$ and
$(u^w)_{w\in W}$ its $\ca{P}$-dual basis, which is characterized by
$\ca{P}(u_w,u^{w'})=\delta_{w,w'}$ for all $w$, $w'\in W$.  (As
reviewed in Section \ref{section;operator}, such bases exist because
$\pi_1(G)$ is torsion-free.)  Let $p\colon X\to\pt$ be the constant
map and define $\bar{u}_w=p^*(u_w)$ and $\bar{u}^w=p^*(u^w)$.  By the
K\"unneth theorem, Theorem \ref{theorem;kunneth}, $(\bar{u}_w)_{w\in
W}$ is a basis of the $K_G^*(X)$-module $K_T^*(X)$.  It follows from
\eqref{equation;natural-pairing} that
$$
\ca{P}_X(\bar{u}_w,\bar{u}^{w'})=p^*\ca{P}(u_w,u^{w'})=\delta_{w,w'}
$$
for all $w$, $w'\in W$.  In other words, the tuple $(\bar{u}^w)_{w\in
W}$ is a basis of $K_T^*(X)$ which is dual to $(\bar{u}_w)_{w\in W}$
with respect to the pairing $\ca{P}_X$.
\end{proof}

\begin{example}\label{example;poincare}
Assume that $\pi_1(G)$ is torsion-free.  Taking $X=G$, equipped with
the left multiplication action of $G$, we find from Proposition
\ref{proposition;orthogonal} that the pairing
$$K^*(G/T)\times K^*(G/T)\to\Z$$
is nonsingular, a form of Poincar\'e duality for the flag variety
$G/T$.  Taking $X=G/T$, we find a $T$-equivariant Poincar\'e duality
theorem for the flag variety, namely the assertion that the pairing
$$K_T^*(G/T)\times K_T^*(G/T)\to R(T)$$
is nonsingular.  Taking $X=\pt$, we obtain $G$-equivariant Poincar\'e
duality, namely that the pairing
$$K_G^*(G/T)\times K_G^*(G/T)\to R(G)$$
is nonsingular (which is equivalent to the pairing $\ca{P}$ on $R(T)$
being nonsingular).
\end{example}

\section{Algebraic equivariant K-theory}\label{section;scheme}

The results of Sections \ref{section;push-pull} and
\ref{section;duality} extend to the setting of algebraic K-theory
thanks to the work of Thomason
\cite{thomason;algebraic-k-theory-group-scheme}, Panin
\cite{panin;algebraic-k-theory-twisted-flag-varieties} and Merkurjev
\cite{merkurjev;comparison-equivariant-ordinary,%
merkurjev;equivariant-k-theory}.  As the arguments are closely
parallel to those presented in the topological context, we will keep
our exposition brief.

Let $\k$ be a field.  By a \emph{variety} we will mean a
quasi-projective scheme over $\k$.  All morphisms, sheaves, algebraic
groups and their actions will be assumed to be defined over $\k$.  We
denote the point object $\Spec(\k)$ by $\pt$, and the unique morphism
from a variety $X$ to $\pt$ by $p$ or $p_X$.  We denote the group of
characters (defined over $\k$) of an algebraic group $H$ by
$\ca{X}(H)$.

Let $G$ be a split reductive group and $X$ a $G$-variety.  We denote
by $\lie{M}^G(X)$ the category of $G$-equivariant coherent
$\ca{O}_X$-modules, where $\ca{O}_X$ is the structure sheaf of $X$,
and by $K^G_*(X)$ Quillen's K-theory of $\lie{M}^G(X)$.  Similarly, we
denote by $\lie{P}^G(X)$ the category of $G$-equivariant locally free
coherent $\ca{O}_X$-modules and by $K_G^*(X)$ its K-theory.  We have
$\lie{M}^G(\pt)=\lie{P}^G(\pt)=\Rep(G)$, the category of $G$-modules
that are finite-dimensional over $\k$, and
$K^G_0(\pt)=K_G^0(\pt)=R(G)$, the Grothendieck ring of $\lie{Rep}(G)$.
The theory $K^G_*(X)$ is covariant with respect to projective
$G$-morphisms, contravariant with respect to flat $G$-morphisms, and
contravariant with respect to group homomorphisms.  The theory
$K_G^*(X)$ is contravariant with respect to arbitrary $G$-morphisms
and with respect to group homomorphisms.  Moreover, tensor
multiplication induces a ring structure on $K_G^0(X)$ and a
$K_G^0(X)$-module structure on $K^G_*(X)$.

Choose a Borel subgroup $B$ of $G$ and a split maximal torus $T$ of
$B$.  Let $j\colon T\to G$ and $k\colon T\to B$ denote the respective
inclusions.  These induce restriction maps
$$
j_X^*=j^*\colon K^G_*(X)\longto K^T_*(X),\qquad k_X^*=k^*\colon
K^B_*(X)\overset\cong\longto K^T_*(X),
$$
the second of which is an isomorphism by
\cite[Corollary~2.15]{merkurjev;comparison-equivariant-ordinary}.  The
flag variety $\ca{B}=G/B$ is projective, so the projection $\pr\colon
X\times \ca{B}\to X$ is a projective $G$-morphism.  It is also flat
and therefore induces homomorphisms
$$
\pr^*\colon K^G_*(X)\longto K^G_*(X\times\ca{B}),\qquad\pr_*\colon
K^G_*(X\times\ca{B})\longto K^G_*(X).
$$
The $B$-morphism $i\colon X\cong X\times\{B\}\to X\times\ca{B}$
induces an isomorphism
$$i^*\colon K^G_*(X\times\ca{B})\overset\cong\longto K^B_*(X)$$
by \cite[Proposition~2.10]{merkurjev;comparison-equivariant-ordinary}.
We have $j^*=k^*i^*\pr^*$ and we define a pushforward homomorphism by
$$
j_{X,*}=j_*=\pr_*(i^*)^{-1}(k^*)^{-1}\colon K^T_*(X)\longto K^G_*(X).
$$

The fact that the variety $\ca{B}$ is smooth and rational implies that
$j_*([\ca{O}_X])=[\ca{O}_X]$, where $[\ca{O}_X]$ is the class of the
structure sheaf in $K^T_0(X)$, resp.\ $K^G_0(X)$.  It now follows from
the projection formula, \cite[\S\,7,
Proposition~2.10]{quillen;higher-algebraic-k-theory-I}, that $j_*$ is
a left inverse of $j^*$ and that $j^*(K^G_*(X))$ is a direct summand
of $K^T_*(X)$.

We now define the endomorphisms $\delta_X=j^*j_*$ and
$\delta_{\alpha,X}=j_\alpha^*j_{\alpha,*}$ of $K^T_*(X)$ just as in
Section \ref{section;push-pull}.  Showing that these operators
generate a $\ca{D}$-action requires a number of auxiliary results.

Let $H$ be an algebraic group and let $Z$ be an $H\times B$-variety
such that the quotient $Z\to Z/B$ exists and is a Zariski locally
trivial principal $B$-bundle.  Let $E_\lambda$ be the one-dimensional
$B$-module defined by $\lambda\in\X(T)\cong\X(B)$, and let
$\ca{L}_\lambda\in\lie{M}^H(Z/B)$ be the sheaf of sections of the
$H$-equivariant line bundle $Z\times^BE_\lambda$ over $Z/B$.  As in
\cite[\S\,1]{demazure;desingularisation}, we define the
\emph{characteristic homomorphism}
$$c_Z^H=c^H\colon R(T)\longto K^H_0(Z/B)$$
by $c^H(e^\lambda)=[\ca{L}_\lambda]$.  Take $Z=\overline{BwB}$, the
closure of the double coset of $w\in W$ in $G$, with $B$ acting by
right multiplication, and take $H$ to be any closed subgroup of $G$
such that left multiplication by $H$ preserves $Z$.  Then we obtain a
homomorphism
$$c_w^H\colon R(T)\longto K^H_0(S_w),$$
where $S_w=\overline{BwB/B}\subset\ca{B}$ is the Schubert variety
corresponding to $w$.  (If $w=w_0$, then $S_w=\ca{B}$ and $H$ is
allowed to be any closed subgroup of $G$.)  The projective morphism
$p_w=p_{S_w}\colon S_w\to\pt$ induces a map
$$p_{w,*}^H\colon K^H_0(S_w)\longto K^H_0(\pt)\cong R(H).$$
Taking $H=T$ we find an endomorphism $p_{w,*}^T\circ c_w^T$ of $R(T)$,
which is equal to Demazure's operator $\partial_w$.

\begin{theorem}\label{theorem;demazure}
We have $\partial_w=p_{w,*}^T\circ c_w^T$ for all $w\in W$, and
$\partial_{w_0}=j_\pt^*\circ j_{\pt,*}$.
\end{theorem}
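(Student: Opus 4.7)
The plan is to establish the first identity $\partial_w=p_{w,*}^T\circ c_w^T$ via a Bott-Samelson reduction to the case of a simple reflection on $\P^1$, and then deduce the second identity by unravelling the definition of $j_{\pt,*}$.

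For the base case $w=s_\alpha$ with $\alpha$ simple, the Schubert variety $S_{s_\alpha}$ is the minimal parabolic quotient $P_\alpha/B$, which as a $T$-variety is $\P^1$ with $T$-fixed points $eB$ and $s_\alpha B$ of tangent weights $\alpha$ and $-\alpha$. The K-theoretic Atiyah-Bott-Lefschetz localization formula applied to $[\ca{L}_\lambda]=c_{s_\alpha}^T(e^\lambda)$ gives
$$p_{s_\alpha,*}^T[\ca{L}_\lambda]=\frac{e^\lambda}{1-e^{-\alpha}}+\frac{e^{s_\alpha\lambda}}{1-e^\alpha}=\frac{e^\lambda-e^{-\alpha}e^{s_\alpha\lambda}}{1-e^{-\alpha}}=\delta_\alpha(e^\lambda),$$
matching \eqref{equation;operator}.

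For general $w$, fix a reduced expression $w=s_{\beta_1}\cdots s_{\beta_l}$ and form the Bott-Samelson variety $Z_w=P_{\beta_1}\times^B\cdots\times^B P_{\beta_l}/B$, which is smooth and admits a proper birational morphism $\pi\colon Z_w\to S_w$. The Kempf-Ramanan-Ramanathan theorem yields $R\pi_*\ca{O}_{Z_w}=\ca{O}_{S_w}$, so by the projection formula $p_{S_w,*}^T[\ca{L}_\lambda]=p_{Z_w,*}^T[\pi^*\ca{L}_\lambda]$. Since $Z_w$ admits an iterated $\P^1$-fibration tower $Z_w\to Z_{\beta_1\cdots\beta_{l-1}}\to\cdots\to\pt$, the pushforward factors step by step, each stage contributing a factor of $\delta_{\beta_k}$ by the base case and the projection formula. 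The resulting composition $\delta_{\beta_1}\cdots\delta_{\beta_l}$ equals $\partial_w$ by Theorem~\ref{theorem;operators}.

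For the second identity, unwinding $j_{\pt,*}=p_{\ca{B},*}(i^*)^{-1}(k^*)^{-1}$: the inverse isomorphisms take $e^\lambda\in R(T)$ to $[\ca{L}_\lambda]\in K_0^G(\ca{B})$, so $j_{\pt,*}(e^\lambda)=p_{\ca{B},*}^G[\ca{L}_\lambda]$, and naturality of pushforward under $T\hookrightarrow G$ yields $j_\pt^*j_{\pt,*}(e^\lambda)=p_{\ca{B},*}^T[\ca{L}_\lambda]=p_{w_0,*}^T c_{w_0}^T(e^\lambda)=\partial_{w_0}(e^\lambda)$ by the $w=w_0$ case. The main obstacle is the Kempf-Ramanan-Ramanathan vanishing of higher direct images, which in positive characteristic requires Frobenius-splitting techniques; a slicker alternative bypasses $S_w$ entirely by working only on Bott-Samelson varieties and invoking Theorem~\ref{theorem;operators} for independence of the reduced expression.
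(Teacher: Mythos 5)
Your argument is correct and, for the second identity, is essentially the paper's own: both you and the paper observe that when $X=\pt$ the composition $(i^*)^{-1}(k^*)^{-1}$ is the characteristic homomorphism $c^G\colon R(T)\to K^G_0(\ca{B})$ and then invoke naturality of pushforward along $j\colon T\to G$ to reduce to the $w=w_0$ case of the first identity. Where you diverge is in the first assertion: the paper disposes of $\partial_w=p_{w,*}^T\circ c_w^T$ in one line by \emph{citing} it as Demazure's character formula (with pointers to \cite[\S\,5, Th\'eor\`eme~2]{demazure;desingularisation} and \cite[Theorem~4.3]{andersen;schubert-demazure}), whereas you reprove it via torus localization on $\P^1$ and the Bott--Samelson resolution. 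Your expansion is faithful to the known proof, and you rightly flag $R\pi_*\ca{O}_{Z_w}=\ca{O}_{S_w}$ as the delicate point --- this is precisely the gap in Demazure's original argument that Andersen and Ramanan--Ramanathan filled, a fact the paper itself records in Section~\ref{section;operator}. Two small caveats. First, the ``slicker alternative'' you offer at the end, working only on Bott--Samelson varieties, would establish a statement about $p_{Z_w,*}^T$ rather than the asserted $p_{w,*}^T\circ c_w^T$, since both $c_w^T$ and $p_w$ are attached to the Schubert variety $S_w$ itself; you still need the comparison $R\pi_*\ca{O}_{Z_w}=\ca{O}_{S_w}$ to pass between the two (for $w=w_0$ this is harmless since $\ca{B}$ is smooth). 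Second, your localization computation tacitly requires the tangent weight at $eB$ to be $\alpha$, i.e.\ $B$ must be the Borel opposite to the one determined by $\ca{R}^+$, which matches the convention $B^-$ of Section~\ref{section;operator} and gives exactly \eqref{equation;operator}; it is worth saying so explicitly, as Section~\ref{section;scheme} leaves the choice of $B$ implicit.
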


\begin{proof}
The first assertion is Demazure's character formula, \cite[\S\,5,
Th\'eor\`eme~2]{demazure;desingularisation},
\cite[Theorem~4.3]{andersen;schubert-demazure}.  Now let $w=w_0$, so
that $S_w=\ca{B}$.  Write $c_{w_0}^H=c^H$ for any closed subgroup $H$
of $G$ and $p_{w_0}=p$.  It follows from the naturality properties of
pullbacks and pushforwards
(\cite[\S\,7.2]{quillen;higher-algebraic-k-theory-I}) that the diagram
$$
\xymatrix@=2em{
R(T)\ar@/^/[drr]^{j_{\pt,*}}\ar[dr]|-{c^G}\ar@/_/[ddr]_{c^T}\\
&K^G_0(\ca{B})\ar[d]^{j_{\ca{B}}^*}\ar[r]_{p_*^G}&R(G)\ar[d]_{j_\pt^*}\\
&K^T_0(\ca{B})\ar[r]^{p_*^T}&R(T)
}
$$
commutes, and hence that $\partial_{w_0}=p_*^T\circ c^T=j_\pt^*\circ
j_{\pt,*}$.
\end{proof}

The next statement is analogous to a result of Snaith
\cite[Lemma~2.4]{snaith;kunneth} and generalizes a result of Uma
\cite[Lemma~1.7]{uma;equivariant-k-theory}.

\begin{lemma}\label{lemma;snaith-uma}
Let $\tilde{T}$ be a split torus and $\phi\colon\tilde{T}\to T$ an
isogeny.  Then, for every $T$-variety $X$, the map
$R(\tilde{T})\otimes_{R(T)}K^T_*(X)\to K^{\tilde{T}}_*(X)$ defined by
$u\otimes a\mapsto u\cdot\phi^*(a)$ is an isomorphism.  Hence the map
$\phi^*\colon K^T_*(X)\to K^{\tilde{T}}_*(X)$ is injective and has an
$R(T)$-linear left inverse.
\end{lemma}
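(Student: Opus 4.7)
The plan is to exploit the fact that the kernel $K=\ker\phi$ is a finite diagonalizable subgroup scheme of the split torus $\tilde T$ and to use this to decompose both $R(\tilde T)$ and $K^{\tilde T}_*(X)$ as direct sums indexed by the finite character group $\ca X(K)$, in a matching way.

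On the algebraic side, since $\phi$ is an isogeny of split tori the pullback $\phi^*\colon\ca X(T)\hookrightarrow\ca X(\tilde T)$ has finite cokernel canonically isomorphic to $\ca X(K)$. A choice of coset representatives $\{\mu_\chi\}_{\chi\in\ca X(K)}\subset\ca X(\tilde T)$ then yields a direct-sum decomposition
$$R(\tilde T)=\bigoplus_{\chi\in\ca X(K)}e^{\mu_\chi}R(T)$$
as a free left $R(T)$-module of rank $|K|$.

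On the geometric side, I would use that the $\tilde T$-action on $X$ factors through $T$, so $K$ acts trivially on $X$. For any $\ca F\in\lie M^{\tilde T}(X)$ the restriction of its equivariant structure to $K$ is simply a $K$-action on the $\ca O_X$-module $\ca F$; Cartier duality for the diagonalizable group $K$ identifies this with an $\ca X(K)$-grading $\ca F=\bigoplus_\chi\ca F_\chi$. Since $\tilde T$ is abelian, each $\ca F_\chi$ is preserved by the $\tilde T$-action, and twisting $\ca F_\chi$ by the character $-\mu_\chi$ kills its $K$-action, exhibiting $\ca F_\chi$ as the pullback of a $T$-equivariant sheaf along $\phi$. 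This produces an equivalence of additive categories
$$\lie M^{\tilde T}(X)\overset{\simeq}{\longrightarrow}\bigoplus_{\chi\in\ca X(K)}\lie M^T(X),$$
and hence, passing to K-theory, an isomorphism $K^{\tilde T}_*(X)\cong\bigoplus_{\chi\in\ca X(K)}K^T_*(X)$.

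Combining the two decompositions, the natural map of the lemma takes the form
$$\bigoplus_{\chi}e^{\mu_\chi}\otimes K^T_*(X)\longrightarrow\bigoplus_{\chi\in\ca X(K)}K^T_*(X),$$
and one verifies that $e^{\mu_\chi}\otimes a$ is sent to $a$ in the $\chi$-summand: multiplication by $e^{\mu_\chi}$ in $K^{\tilde T}_*(X)$ raises $K$-weight by $\chi$, $\phi^*(a)$ has $K$-weight zero, and the defining twist by $-\mu_\chi$ of the $\chi$-component cancels the shift. The map is therefore a summand-by-summand isomorphism, and $\phi^*$ itself is identified with the inclusion of the $\chi=0$ factor, so the projection onto that factor supplies the required $R(T)$-linear left inverse. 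The main obstacle is the equivalence-of-categories step, which relies on Cartier duality for a finite group scheme of multiplicative type over an arbitrary field together with the bookkeeping that the $R(\tilde T)$-action on $K^{\tilde T}_*(X)$ shifts the $\ca X(K)$-grading by reduction of weight; alternatively the lemma can be deduced from general base-change results of Merkurjev or Thomason for equivariant K-theory along a finite flat homomorphism of algebraic groups, which bundle this verification into a single citation.
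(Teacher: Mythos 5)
Your proposal follows essentially the same route as the paper. Both decompose via the (diagonalizable) kernel of $\phi$, use that it acts trivially on $X$ to produce an $\X(\ker\phi)$-grading on $\tilde T$-equivariant sheaves, identify the graded pieces (after an appropriate character twist chosen via coset representatives of $\phi^*\X(T)$ in $\X(\tilde T)$) with pullbacks of $T$-equivariant sheaves, and observe that the projection onto the trivial-weight summand gives the $R(T)$-linear left inverse of $\phi^*$. The only differences are presentational: the paper writes out an explicit pair of quasi-inverse functors $\mu$, $\nu$ between $\lie{M}^{\tilde T}(X)$ and $\prod_{\lambda}\lie{M}^T(X)$, invoking linear reductivity of the diagonalizable kernel to ensure the invariant subsheaf $\ca{G}^C$ is coherent and verifying $\mu\nu\cong\id$, $\nu\mu\cong\id$ by the isotypic decomposition; you gesture at the same equivalence via Cartier duality and descent along $\phi$ and suggest outsourcing the verification to a citation. (A small wrinkle worth flagging: the paper phrases things in terms of the ``reduced kernel'' $C$, whereas you work with the full scheme-theoretic kernel $K=\ker\phi$; since in positive characteristic the kernel of an isogeny of split tori may be non-reduced and $\X(K)=\X(\tilde T)/\phi^*\X(T)$ is exactly the indexing set one needs, your choice is if anything the cleaner one, and the two formulations are meant to describe the same object.)
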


\begin{proof}
Let $\psi\colon C\to\tilde{T}$ be the reduced kernel of $\phi$ and let
$\lie{M}$ be the abelian category
$$\lie{M}=\prod_{\lambda\in\X(C)}\lie{M}^T(X).$$
We write an object of $\lie{M}$ as an $\X(C)$-tuple
$(\ca{F}_\lambda)_{\lambda\in\X(C)}$, where each $\ca{F}_\lambda$ is a
$T$-equivariant coherent sheaf on $X$.  The restriction homomorphism
$\psi^*\colon\X(\tilde{T})\to\X(C)$ is surjective
(\cite[\S\,3.2]{springer;linear-algebraic-groups}); we choose a
(set-theoretic) left inverse $\sigma$.  Let $V_\xi$ be the
one-dimensional $\tilde{T}$-module defined by a character
$\xi\in\X(\tilde{T})$.  Let $\ca{G}$ be a $\tilde{T}$-equivariant
coherent sheaf on $X$.  Since $C$ acts trivially on $X$, the subsheaf
$\ca{G}^C$ of $C$-invariant sections of $\ca{G}$ is a well-defined
$T$-equivariant $\ca{O}_X$-module.  The group $C$, being
diagonalizable, is linearly reductive, so every locally finite
$C$-module is a sum of isotypical components.  (See e.g.\
\cite[\S\,III, \S\,V]{springer;aktionen-reduktiver-gruppen}.)  It
follows that the subsheaf $\ca{G}^C$ is coherent.  This enables us to
define functors
$$
\xymatrix{
{\lie{M}}\ar@<0.5ex>[r]^-\mu&
{\lie{M}^{\tilde{T}}(X)}\ar@<0.5ex>[l]^-\nu
}
$$
by
\begin{gather*}
\mu\bigl((\ca{F}_\lambda)_{\lambda\in\X(C)}\bigr)
=\bigoplus_{\lambda\in\X(C)}p^*(V_{\sigma(\lambda)})
\otimes_{\ca{O}_X}\phi^*(\ca{F}_\lambda),\\
\nu(\ca{G})=\bigl(\sheafhom_{\ca{O}_X}(p^*(V_{\sigma(\lambda)}),
\ca{G})^C\bigr)_{\lambda\in\X(C)}.
\end{gather*}
Here $\phi^*(\ca{F})$ denotes a $T$-equivariant sheaf $\ca{F}$
regarded as a $\tilde{T}$-equivariant sheaf via the homomorphism
$\phi$, and $\sheafhom_{\ca{O}_X}$ denotes sheaf hom.  Then, for all
objects $(\ca{F}_\lambda)_{\lambda\in\X(C)}$ of $\lie{M}$,
\begin{multline*}
\nu\mu((\ca{F}_\lambda)_\lambda)=\Bigl(\bigoplus_\lambda
\sheafhom_{\ca{O}_X}
\bigl(p^*(V_{\sigma(\lambda')}),p^*(V_{\sigma(\lambda)})
\otimes_{\ca{O}_X}\phi^*(\ca{F}_\lambda)\bigr)^C\Bigr)_{\lambda'}
\\
\cong\Bigl(\bigoplus_\lambda \sheafhom_{\ca{O}_X}
\bigl(p^*(V_{\sigma(\lambda')}),p^*(V_{\sigma(\lambda)})\bigr)^C
\otimes_{\ca{O}_X}\ca{F}_\lambda\Bigr)_{\lambda'}
\\
\cong\Bigl(\bigoplus_\lambda p^*\bigl(\Hom_\k
(V_{\sigma(\lambda')},V_{\sigma(\lambda)})^C\bigr)
\otimes_{\ca{O}_X}\ca{F}_\lambda\Bigr)_{\lambda'}
\cong(\ca{F}_\lambda)_\lambda,
\end{multline*}
and, for all objects $\ca{G}$ of $\lie{M}^{\tilde{T}}(X)$,
$$
\mu\nu(\ca{G})=\bigoplus_{\lambda\in\X(C)}p^*(V_{\sigma(\lambda)})
\otimes_{\ca{O}_X}\phi^*\bigl(\sheafhom_{\ca{O}_X}
(p^*(V_{\sigma(\lambda)}),\ca{G})^C\bigr)\cong\ca{G}
$$
by the isotypical decomposition formula.  We conclude that the abelian
categories $\lie{M}$ and $\lie{M}^{\tilde{T}}(X)$ are equivalent and,
by virtue of \cite[\S\,2(8)]{quillen;higher-algebraic-k-theory-I},
that the map
\begin{equation}\label{equation;sum}
\bigoplus_{\lambda\in\X(C)}K^T_n(X)\overset\cong\longto
K^{\tilde{T}}_n(X)
\end{equation}
defined by $(a_\lambda)_{\lambda\in C}\mapsto\sum_{\lambda\in
C}e^{\sigma(\lambda)}\cdot\phi^*(a_\lambda)$ is an isomorphism for all
$n\in\N$.  Setting $X=\pt$ and $n=0$ in \eqref{equation;sum} gives
$R(\tilde{T})\cong\bigoplus_{\lambda\in\X(C)}R(T)$, and hence
\begin{equation}\label{equation;sum-point}
R(\tilde{T})\otimes_{R(T)}K^T_n(X)\cong
\bigoplus_{\lambda\in\X(C)}K^T_n(X).
\end{equation}
Combining the isomorphisms \eqref{equation;sum} and
\eqref{equation;sum-point}, we get
$$R(\tilde{T})\otimes_{R(T)}K^T_n(X)\cong K^{\tilde{T}}_n(X).$$
This isomorphism is induced by the functor
$\lie{Rep}(\tilde{T})\times\lie{M}^T(X)\to\lie{M}^{\tilde{T}}(X)$
defined by $(E,\ca{F})\mapsto p^*(E)\otimes_{\ca{O}_X}\phi^*(\ca{F})$,
which proves the first assertion.  Finally, the isomorphism
\eqref{equation;sum} is $R(T)$-linear and the projection of
$K^{\tilde{T}}_n(X)$ onto the direct summand corresponding to
$\lambda=0$ is a left inverse of $\phi^*$.
\end{proof}

A further ingredient we need is a K\"unneth formula of Merkurjev.

\begin{theorem}%
[{\cite[Proposition~4.1]{merkurjev;comparison-equivariant-ordinary}}]%
\label{theorem;merkurjev}
Assume that $\pi_1(G)$ is torsion-free.  For every $G$-variety $X$,
the map
$$R(T)\otimes_{R(G)}K^G_*(X)\to K^T_*(X)$$
defined by $u\otimes b\mapsto u\cdot j^*(b)$ is an isomorphism.
\end{theorem}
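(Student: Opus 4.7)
The plan is to imitate the argument for the topological K\"unneth theorem (Theorem \ref{theorem;kunneth}), using the identifications already established in this section to rewrite the target and then exploiting the Bruhat stratification of $\ca{B}$. The chain $K^T_*(X)\cong K^B_*(X)\cong K^G_*(X\times\ca{B})$ recalled earlier turns the right-hand side into $K^G_*(X\times\ca{B})$. The characteristic homomorphism $c^G\colon R(T)\to K^G_0(\ca{B})$ together with the external product yields an $R(G)$-bilinear pairing $R(T)\times K^G_*(X)\to K^G_*(X\times\ca{B})$ that descends to a natural map $\theta\colon R(T)\otimes_{R(G)}K^G_*(X)\to K^G_*(X\times\ca{B})$ compatible with the one in the statement (via $u\otimes b\mapsto c^G(u)\boxtimes b$).

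To prove $\theta$ is an isomorphism I would argue that both sides are free $K^G_*(X)$-modules of rank $\abs{W}$ and that $\theta$ matches bases. For the source, freeness follows from the Pittie-Steinberg theorem: since $\pi_1(G)$ is torsion-free, $R(T)$ is free of rank $\abs{W}$ over $R(G)$. For the target, I would use the Bruhat decomposition $\ca{B}=\bigsqcup_{w\in W}BwB/B$, in which each cell is an affine space of dimension $\ell(w)$. Filter $\ca{B}$ by its Schubert subvarieties $S_w$, pull back to $X\times\ca{B}$, and apply Quillen's localization sequence to each successive pair $(X\times S_w,X\times S_{<w})$. Combined with homotopy invariance for affine bundles, each subquotient is isomorphic to $K^G_*(X)$, so $K^G_*(X\times\ca{B})$ is a free $K^G_*(X)$-module of rank $\abs{W}$, with basis represented by the Schubert classes $[\ca{O}_{S_w}]\boxtimes 1$.

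The case $X=\pt$ reduces to $R(T)\cong K^G_0(\ca{B})$, which follows from the cellular computation just described (applied to $X=\pt$) together with Demazure's character formula (Theorem \ref{theorem;demazure}), and is equivalently Pittie's theorem. Once $\theta$ is known to be an isomorphism for $X=\pt$, I would propagate the result by naturality in $X$: fixing a Steinberg basis $(u_w)_{w\in W}$ of $R(T)$ over $R(G)$, the multiplication-by-$c^G(u_w)$ maps assemble into a map from $\bigoplus_w K^G_*(X)$ onto $K^G_*(X\times\ca{B})$ which, by compatibility with the Schubert filtration, is triangular and unipotent with respect to the basis $\{[\ca{O}_{S_w}]\boxtimes 1\}_w$, forcing $\theta$ to be bijective.

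The main obstacle is the cellular filtration argument for the target: the Schubert cells are $B$-invariant but not $G$-invariant, so each step of the devissage takes place in $B$-equivariant K-theory and must be translated back to the $G$-equivariant setting via the isomorphism $i^*\colon K^G_*(X\times\ca{B})\overset{\cong}{\to} K^B_*(X)$ and its analogues for open subschemes of $\ca{B}$. Making this compatibility rigorous (so that the output of the Bruhat filtration is precisely the free $K^G_*(X)$-module of rank $\abs{W}$ generated by the characteristic classes $c^G(u_w)$) is the real content, and is what turns the statement from a formal consequence of Pittie-Steinberg into an actual theorem.
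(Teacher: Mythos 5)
Since the paper simply cites Merkurjev's Proposition 4.1 for this theorem and gives no proof of its own, you are reconstructing the external argument.  Your opening reduction $K^T_*(X)\cong K^B_*(X)\cong K^G_*(X\times\ca{B})$ is correct and uses exactly the isomorphisms recalled in the section, and invoking Pittie--Steinberg together with the characteristic homomorphism is the right starting point.

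The gap is in the Bruhat filtration step.  The Schubert varieties $S_w$ are $B$-invariant but not $G$-invariant for $w\ne w_0$; indeed $\ca{B}=G/B$ is a single $G$-orbit, so it has no nonempty proper $G$-invariant closed subsets at all.  Therefore $X\times S_w$ is not a $G$-invariant closed subscheme of $X\times\ca{B}$, and Quillen's localization sequence in $G$-equivariant K-theory simply does not apply to the pair $(X\times S_w, X\times S_{<w})$.  The proposed repair --- translating each step through $i^*$ and ``its analogues for open subschemes of $\ca{B}$'' --- does not exist: the isomorphism $i^*\colon K^G_*(X\times\ca{B})\cong K^B_*(X)$ rests on the identification $X\times\ca{B}\cong G\times^B X$, which is special to the full flag variety, and there is no comparable $G$-variety attached to the locally closed pieces $X\times BwB/B$.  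For the same reason, the ``Schubert classes $[\ca{O}_{S_w}]\boxtimes 1$'' you propose to use as a basis are not elements of $K^G_*(X\times\ca{B})$: the sheaf $\ca{O}_{S_w}$ carries a canonical $B$-equivariant structure but no $G$-equivariant one, so these classes live in $K^B_*$ or $K^T_*$, not in the group you are trying to decompose.  A Schubert d\'evissage is available only in $B$- or $T$-equivariant K-theory, where it computes $K^T_*(X\times\ca{B})\cong K^G_*(X\times\ca{B}\times\ca{B})$ --- one flag-variety factor too many --- and cannot be bootstrapped to the statement at hand.  This is not a technicality to be tidied up; it is the crux.  The usual route to the freeness of $K^G_*(X\times\ca{B})$ over $K^G_*(X)$ (and the one Merkurjev's argument is built around, following Panin) is instead to produce a dual Steinberg basis from the nonsingularity of the $R(G)$-bilinear form $j_*(u_1u_2)$ on $R(T)$, use the projection formula to split the K\"unneth map, and then argue surjectivity separately --- none of which involves a stratification of $\ca{B}$ by Schubert varieties.
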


A point where the present treatment diverges from that of Section
\ref{section;push-pull} is the absence of a ring structure on the
group $K^G_*(X)$.  Because of this there is no natural sense in which
the $\delta_{\alpha,X}$ are $K^G_*(X)$-linear.  Instead we have the
following statement.

\begin{lemma}\label{lemma;''linear''}
Assume that $\pi_1(G)$ is torsion-free.  Identify the $R(T)$-modules
$K^T_*(X)$ and $R(T)\otimes_{R(G)}K^G_*(X)$ via the K\"unneth
isomorphism.  Then $\delta_X=\partial_{w_0}\otimes1$, where $1$
denotes the identity map of $K^G_*(X)$.
\end{lemma}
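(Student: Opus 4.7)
The plan is to verify the identity on classes of the form $p_X^*(u) \cdot j_X^*(b)$ with $u \in R(T)$ and $b \in K^G_*(X)$, which span $K^T_*(X)$ under the K\"unneth isomorphism of Theorem~\ref{theorem;merkurjev}. The strategy is to unwind the definition $j_{X,*} = \pr_{X,*}(i_X^*)^{-1}(k_X^*)^{-1}$ and reduce to the base case $X = \pt$ (handled by Theorem~\ref{theorem;demazure}) via naturality of pushforward under the structure morphism $p_X\colon X \to \pt$ together with the projection formula for $\pr_X\colon X \times \ca{B} \to X$.

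First I would rewrite $(k_X^*)^{-1}\bigl(p_X^*(u) \cdot j_X^*(b)\bigr)$ as $p_X^{B,*}(u) \cdot i_X^* \pr_X^*(b) \in K^B_*(X)$, using that $j_X^* = k_X^* i_X^* \pr_X^*$ and that $p_X^*$ on $R(T)$ factors through the isomorphism $R(B) \cong R(T)$. Next, letting $c^G\colon R(T) \to K^G_0(\ca{B})$ be the characteristic homomorphism, the class $\pr_{\ca{B}}^* c^G(u) \in K^G_0(X \times \ca{B})$ restricts along $i_X\colon X \hookrightarrow X \times \ca{B}$ to $p_X^{B,*}(u)$, since $i_B^* c^G(e^\lambda) = [E_\lambda] = e^\lambda$ in $R(B)$. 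Applying $(i_X^*)^{-1}$ and using its $K_G^0(X \times \ca{B})$-linearity (it is the inverse of a $K_G^0$-linear isomorphism) gives
$$
(i_X^*)^{-1}(k_X^*)^{-1}\bigl(p_X^*(u) \cdot j_X^*(b)\bigr) = \pr_{\ca{B}}^*\bigl(c^G(u)\bigr) \cdot \pr_X^*(b)
$$
in $K^G_*(X \times \ca{B})$.

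Applying $\pr_{X,*}$ and invoking the projection formula for the projective flat morphism $\pr_X$ (which is $K_G^0(X)$-linear), the right-hand side becomes $\pr_{X,*}\bigl(\pr_{\ca{B}}^* c^G(u)\bigr) \cdot b$. Flat base change for the Cartesian square formed by $\pr_X$, $\pr_{\ca{B}}$, $p_X$, and $p_{\ca{B}}$ yields $\pr_{X,*} \circ \pr_{\ca{B}}^* = p_X^* \circ p_{\ca{B},*}$, and by construction $p_{\ca{B},*}\bigl(c^G(u)\bigr) = j_{\pt,*}(u)$. Combining these steps gives $j_{X,*}\bigl(p_X^*(u) \cdot j_X^*(b)\bigr) = p_X^*\bigl(j_{\pt,*}(u)\bigr) \cdot b$. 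Applying $j_X^*$ and using Theorem~\ref{theorem;demazure} (which identifies $j_\pt^* j_{\pt,*}$ with $\partial_{w_0}$) produces the desired identity $\delta_X\bigl(p_X^*(u) \cdot j_X^*(b)\bigr) = p_X^*\bigl(\partial_{w_0}(u)\bigr) \cdot j_X^*(b)$.

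The main obstacle is checking that the projection formula and flat base change apply at the level of the higher K-groups $K^G_*$, which carry only a $K_G^0$-module structure rather than a ring structure; in particular one cannot multiply by $j_X^*(b)$ inside $K^T_*(X)$ as one would in the topological case, so the argument must route through $K^G_0(X \times \ca{B})$-linear pullbacks on $X \times \ca{B}$ rather than through a direct analogue of Lemma~\ref{lemma;square}. These properties are standard consequences of Quillen's K-theory of coherent sheaves and are used throughout the cited work of Thomason, Panin, and Merkurjev; once granted, the computation is a formal diagram chase.
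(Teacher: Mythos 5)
Your proof is correct and follows the same strategy as the paper's: reduce to pure tensors $u\cdot j^*(b)$ under the K\"unneth identification, prove the key identity $j_{X,*}\bigl(p_X^*(u)\cdot j_X^*(b)\bigr)=p_X^*\bigl(j_{\pt,*}(u)\bigr)\cdot b$, and then apply $j_X^*$ together with Theorem~\ref{theorem;demazure}. The paper compresses the middle step into the phrase ``the projection formula and the naturality of $j^*$,'' whereas you supply the actual diagram chase through $K^G_*(X\times\ca{B})$, correctly noting that the absence of a ring structure on $K^T_*(X)$ forces one to work with the $K_G^0(X\times\ca{B})$-module structure and flat base change for the square over $\pt$ rather than multiplying directly in $K^T_*(X)$.
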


\begin{proof}
For all $u\in R(T)$ and $b\in K^G_*(X)$ we have
$$
\delta_X(u\cdot j^*(b))=j^*j_*(u\cdot j^*(b))=j^*(j_*(u)\cdot
b)=j^*j_*(u)\cdot j^*(b)=\partial_{w_0}(u)\cdot j^*(b),
$$
where we used the projection formula and the naturality of $j^*$.
\end{proof}

Under the same assumptions we have
$\delta_{\alpha,X}=\delta_\alpha\otimes1$ for all roots $\alpha$.
With Theorem \ref{theorem;demazure}, Lemma \ref{lemma;snaith-uma},
Theorem \ref{theorem;merkurjev} and Lemma \ref{lemma;''linear''} in
hand, one proves the next assertion in exactly the same way as
Proposition \ref{proposition;lift}.

\begin{proposition}\label{proposition;lift-algebraic}
The operators $\delta_{\alpha,X}$, for $\alpha\in\ca{R}$, together
with the natural $R(T)$-module structure generate a unique
$\ca{D}$-module structure on $K_T^*(X)$.  This $\ca{D}$-module
structure is contravariant with respect to $G$-morphisms of varieties
and isogenies of~$G$.
\end{proposition}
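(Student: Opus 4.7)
The plan is to mirror the proof of Proposition \ref{proposition;lift} in the algebraic setting, substituting each topological ingredient with its algebraic counterpart prepared above.

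First I would treat the case $\pi_1(G)$ torsion-free. By Theorem \ref{theorem;merkurjev} one may identify $K^T_*(X) \cong R(T) \otimes_{R(G)} K^G_*(X)$. Since Proposition \ref{proposition;endomorphism} realises $\ca{D}$ inside $\End_{R(G)}(R(T))$, each $\Delta \in \ca{D}$ yields an endomorphism $\Delta \otimes 1$ of $R(T) \otimes_{R(G)} K^G_*(X)$, producing a ring homomorphism $\ca{D} \to \End_\Z(K^T_*(X))$ that extends the natural $R(T)$-action. To verify that the operator assigned to $\delta_\alpha$ coincides with $\delta_{\alpha,X}$, apply Lemma \ref{lemma;''linear''} with $G$ replaced by $G_\alpha$ (whose fundamental group is still torsion-free). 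Uniqueness is automatic, since $R(T)$ and the $\delta_\alpha$ generate $\ca{D}$.

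Next I would remove the torsion-free hypothesis by passing to a cover. Choose an isogeny $\phi \colon \tilde{G} \to G$ with $\pi_1(\tilde{G})$ torsion-free (for example, a product of the simply connected cover of the derived group with a suitable split torus), so that on maximal tori $\phi$ restricts to an isogeny $\tilde{T} \to T$ to which Lemma \ref{lemma;snaith-uma} applies. That lemma shows $\phi^* \colon K^T_*(X) \to K^{\tilde{T}}_*(X)$ is $R(T)$-linear and injective. Naturality of $j_\alpha^*$ and $j_{\alpha,*}$ under group homomorphisms gives the intertwining identity $\phi^* \delta_{\alpha,X} = \tilde{\delta}_{\alpha,X} \phi^*$. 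By Lemma \ref{lemma;covering}, $\bar\phi \colon \ca{D} \to \tilde{\ca{D}}$ is an injective algebra homomorphism, so the $\tilde{\ca{D}}$-action on $K^{\tilde{T}}_*(X)$ from the torsion-free step restricts along $\bar\phi$ to an action of $\ca{D}$ which, by the intertwining relation, preserves $K^T_*(X)$ and acts there by $\delta_{\alpha,X}$. The contravariance claims then follow from the corresponding naturality of $j^*$ and $j_*$.

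The step demanding most care is the identification $\delta_{\alpha,X} = \delta_\alpha \otimes 1$ in the torsion-free case. In Proposition \ref{proposition;lift} this used the $K_G^*(X)$-linearity of $\delta_X$ (Lemma \ref{lemma;square}), which is unavailable in the algebraic setting because $K^G_*(X)$ carries no ring structure. Lemma \ref{lemma;''linear''} bridges this gap via the projection formula in algebraic K-theory; once it is invoked, the rest of the argument is essentially verbatim from the topological case.
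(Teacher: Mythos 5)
Your proposal is correct and matches the paper's intent: the paper's proof of this proposition is literally the sentence ``one proves the next assertion in exactly the same way as Proposition~\ref{proposition;lift},'' with Theorem~\ref{theorem;demazure}, Lemma~\ref{lemma;snaith-uma}, Theorem~\ref{theorem;merkurjev} and Lemma~\ref{lemma;''linear''} playing the roles of \eqref{equation;long-symmetric}, Snaith's Lemma~2.4, Theorem~\ref{theorem;kunneth} and Lemma~\ref{lemma;square} respectively, and you have correctly fleshed out exactly this substitution, including the passage to a cover with torsion-free fundamental group via Lemma~\ref{lemma;snaith-uma} and Lemma~\ref{lemma;covering}.

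One small remark on the torsion-free case. You propose to identify $\delta_{\alpha,X}$ with $\delta_\alpha\otimes1$ by applying Lemma~\ref{lemma;''linear''} with $G$ replaced by $G_\alpha$, which quietly invokes the K\"unneth theorem (Theorem~\ref{theorem;merkurjev}) for $G_\alpha$ and hence the fact that $\pi_1(G_\alpha)$ inherits torsion-freeness from $\pi_1(G)$. This fact is true --- it amounts to the coroot $\alpha^\vee$ being primitive in $\X_*(T)$, which follows from the saturation of the coroot lattice in $\X_*(T)$ together with primitivity of $\alpha^\vee$ in the coroot lattice --- but it is not stated in the paper and deserves a line of justification. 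The route the paper hints at (``under the same assumptions we have $\delta_{\alpha,X}=\delta_\alpha\otimes1$'') is slightly more economical: run the projection-formula computation of Lemma~\ref{lemma;''linear''} directly for $j_\alpha$, writing $j^*=j_\alpha^*\circ j_{G_\alpha}^*$ and using only the K\"unneth identification for $G$ itself; this avoids any appeal to a K\"unneth theorem for $G_\alpha$ and hence any discussion of $\pi_1(G_\alpha)$. Either way the conclusion is the same, and your argument is sound.
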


The proof of the following result is now identical to that of Theorem
\ref{theorem;d}.

\begin{theorem}\label{theorem;d-algebraic}
For every $G$-variety $X$, the map $j^*$ is an isomorphism from
$K^G_*(X)$ onto $K^T_*(X)^{I(\ca{D})}$.
\end{theorem}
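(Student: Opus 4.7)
The plan is to mimic verbatim the argument that proved Theorem~\ref{theorem;d}, with the algebraic analogues of the three input results substituted in. Concretely, in the topological proof the three essential ingredients were (a) the description of $\ca{D}$ as the full endomorphism algebra of the progenerator $R(T)$ over $R(G)$ together with the Morita formalism, (b) the K\"unneth formula identifying $K_T^*(X)$ with $R(T)\otimes_{R(G)}K_G^*(X)$ when $\pi_1(G)$ is torsion-free, and (c) Snaith's injectivity result that reduces the general case to the torsion-free case. The first ingredient is purely algebraic and has already been established in Proposition~\ref{proposition;endomorphism}, so it carries over unchanged. For (b) we use Merkurjev's K\"unneth isomorphism (Theorem~\ref{theorem;merkurjev}), and for (c) we use Lemma~\ref{lemma;snaith-uma}.

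First I would assume $\pi_1(G)$ is torsion-free. Consider the categories $\lie{A}=\ca{D}\text{-}\lie{Mod}$ and $\lie{B}=R(G)\text{-}\lie{Mod}$. By Pittie-Steinberg and Proposition~\ref{proposition;endomorphism}(iv), $R(T)$ is a progenerator of $\lie{B}$ whose $R(G)$-endomorphism ring is $\ca{D}$, so the Morita functor $\lie{G}(B)=R(T)\otimes_{R(G)}B$ is an equivalence with inverse $\lie{F}(A)=\Hom_{\ca{D}}(R(T),A)$. Exactly as in the proof of Theorem~\ref{theorem;d}, the natural map $\Phi_A\colon\lie{F}(A)\to A^{I(\ca{D})}$, $f\mapsto f(1)$ is an isomorphism (injectivity because $R(T)\subset\ca{D}$, surjectivity because for $a\in A^{I(\ca{D})}$ Lemma~\ref{lemma;invariant} tells us $u\mapsto ua$ lies in $\lie{F}(A)$). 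Applying this to $A=K^T_*(X)$ and $B=K^G_*(X)$, and invoking Theorem~\ref{theorem;merkurjev} to identify $A\cong\lie{G}(B)$ through $j^*$, we conclude $K^G_*(X)\cong\lie{F}(K^T_*(X))\cong K^T_*(X)^{I(\ca{D})}$, with the isomorphism given by $j^*$ because $\Phi$ sends $b\mapsto j^*(b)$ under the identifications.

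For a general split reductive $G$, I would choose an isogeny $\phi\colon\tilde G\to G$ with $\tilde G$ split reductive and $\pi_1(\tilde G)$ torsion-free (e.g.\ $\tilde G$ a product of a split torus and a simply connected group). Lemma~\ref{lemma;snaith-uma} shows that $\phi^*\colon K^T_*(X)\to K^{\tilde T}_*(X)$ is injective, and the torsion-free case already established gives that the image of $\tilde\jmath^*\colon K^{\tilde G}_*(X)\to K^{\tilde T}_*(X)$ equals $K^{\tilde T}_*(X)^{I(\tilde{\ca{D}})}$ with $\tilde\jmath_*$ as inverse. Combining this with the naturality square relating the four pullback/pushforward maps under $\phi$, with Lemma~\ref{lemma;covering} (which guarantees $\phi^*$ sends $K^T_*(X)^{I(\ca{D})}$ into $K^{\tilde T}_*(X)^{I(\tilde{\ca{D}})}$), and with the injectivity of $\phi^*$ on $K^G_*(X)\hookrightarrow K^{\tilde G}_*(X)$, the same diagram chase as at the end of the proof of Theorem~\ref{theorem;d} identifies $j^*(K^G_*(X))$ with $K^T_*(X)^{I(\ca{D})}$.

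The main obstacle is psychological rather than technical: one must check that every step that in the topological setting was justified by a named theorem has already been put in place in the algebraic setting. The projection formula (used in Lemma~\ref{lemma;''linear''}) replaces the $K_G^*(X)$-linearity of $j_*$, Merkurjev's Theorem~\ref{theorem;merkurjev} replaces the topological K\"unneth formula, and Lemma~\ref{lemma;snaith-uma} replaces Snaith's Lemma~2.4. Once these substitutions are recognized, the Morita argument and the torsion-reduction diagram chase go through without change.
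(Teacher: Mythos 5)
Your proposal is correct and follows exactly the route the paper intends: the paper's own proof of Theorem~\ref{theorem;d-algebraic} is the single sentence that the argument is ``now identical to that of Theorem~\ref{theorem;d},'' and you have correctly spelled out what this means, substituting Merkurjev's K\"unneth formula (Theorem~\ref{theorem;merkurjev}) for the topological K\"unneth theorem and Lemma~\ref{lemma;snaith-uma} for Snaith's injectivity result, while noting that the Morita equivalence and Lemma~\ref{lemma;covering} are purely representation-theoretic and carry over verbatim.
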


From this one easily deduces obvious analogues of Theorems
\ref{theorem;weyl} and \ref{theorem;abelian} and Corollary
\ref{corollary;abelian-torsion}%
\eqref{item;torsion-free}--\eqref{item;injective}, which we leave it
to the reader to state.  The analogue of Corollary
\ref{corollary;abelian-torsion}\eqref{item;projective} is as follows.

\begin{corollary}\label{corollary;complete}
Assume that $\k$ is perfect.  Let $X$ be a smooth projective
$G$-variety over $\k$.  Then $j^*\colon K^G_*(X)\to K^T_*(X)^W$ is an
isomorphism.
\end{corollary}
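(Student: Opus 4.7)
The plan is to deduce this from the algebraic analogue of Corollary \ref{corollary;abelian-torsion}\eqref{item;injective}, by showing that the restriction $K^T_*(X)\to K^T_*(X^T)$ is injective. The algebraic counterpart of Theorem \ref{theorem;abelian}\eqref{item;zero-divisor} follows by the verbatim argument of Lemma \ref{lemma;discriminant}\eqref{item;weyl}: the identity
$$\d=\d\pi+\sum_w\det(w)(w\odot u_0)(1-w)\in\ca{J}$$
holds inside $\ca{D}$, and together with Lemma \ref{lemma;left-inverse}\eqref{item;left-ideal} applied to the $\ca{D}$-module structure on $K^T_*(X)$ provided by Proposition \ref{proposition;lift-algebraic} it yields $K^T_*(X)^W=K^T_*(X)^{I(\ca{D})}$ whenever the Weyl denominator $\d$ is not a zero divisor in $K^T_*(X)$. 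Combined with Theorem \ref{theorem;d-algebraic} this gives the desired isomorphism $j^*\colon K^G_*(X)\cong K^T_*(X)^W$.

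Second, I would reduce the non-zero-divisor hypothesis to injectivity on fixed points. Since $T$ is diagonalizable and acts trivially on $X^T$, every $T$-equivariant coherent sheaf on $X^T$ decomposes canonically into $\X(T)$-graded isotypical components, which yields an $R(T)$-module isomorphism $K^T_*(X^T)\cong K_*(X^T)[\X(T)]$. By \cite[\S\,VI.3.2]{bourbaki;groupes-algebres}, the elements $1-e^{-\alpha}$ are not zero divisors in any group ring $A[\X(T)]$, and hence $\d=\prod_{\alpha\in\ca{R}^+}(1-e^{-\alpha})$ acts injectively on $K^T_*(X^T)$. Any injection $K^T_*(X)\hookrightarrow K^T_*(X^T)$ therefore forces $\d$ to be a non-zero-divisor in $K^T_*(X)$.

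The main obstacle is establishing the injectivity of $K^T_*(X)\to K^T_*(X^T)$ for a smooth projective $G$-variety over a perfect field. The standard route is the Bia{\l}ynicki-Birula decomposition: for a generic one-parameter subgroup of $T$, the variety $X_{\bar{\k}}$ admits a filtration by closed $T$-invariant subvarieties whose successive open strata are $T$-equivariant affine bundles over the connected components of $(X_{\bar{\k}})^T$. Homotopy invariance of equivariant K-theory and the localization long exact sequence then yield, by induction on the filtration, a direct sum decomposition of $K^T_*(X_{\bar{\k}})$ indexed by the fixed components, under which the restriction to the fixed locus is the inclusion of summands. Since $\k$ is perfect, $(X^T)_{\bar{\k}}=(X_{\bar{\k}})^T$, and faithfully flat base change descends the injectivity from $\bar{\k}$ to $\k$. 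Alternatively, one may extract this injectivity directly from Thomason's work \cite{thomason;algebraic-k-theory-group-scheme} on equivariant K-theory of smooth varieties, bypassing the explicit cell-by-cell induction.
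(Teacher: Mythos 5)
Your reduction chain is the same as the paper's: pass from the $W$-invariants question to showing the Weyl denominator $\d$ is not a zero divisor in $K^T_*(X)$ (via the algebraic versions of Theorem \ref{theorem;abelian}\eqref{item;zero-divisor} and Lemma \ref{lemma;discriminant}), then reduce that to the injectivity of $K^T_*(X)\to K^T_*(X^T)$ using the group-algebra description of $K^T_*(X^T)$. This is exactly the argument in the proof of Corollary \ref{corollary;abelian-torsion}\eqref{item;injective}. The divergence is in how you establish the injectivity on fixed points, and here your argument has a gap. The paper simply cites Theorem~2 of Vezzosi--Vistoli (\cite{vezzosi-vistoli;higher-algebraic-diagonalizable}), which states precisely this injectivity for diagonalizable group actions on smooth proper varieties over a perfect field.

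Your first alternative --- Bia{\l}ynicki--Birula decomposition over $\bar\k$ followed by descent --- does not close. The sentence ``faithfully flat base change descends the injectivity from $\bar\k$ to $\k$'' presupposes that the base-change map $K^T_*(X)\to K^T_*(X_{\bar\k})$ is itself injective, which is not automatic: this map is a filtered colimit of pullbacks along finite extensions $L/\k$, each of which is only known (via transfer and the projection formula) to be injective modulo the class $[p_{L,*}\mathcal{O}_{X_L}]$, not injective outright. Injectivity of $K^T_*(X)\to K^T_*(X_{\bar\k})$ for higher $K$-groups is a nontrivial statement, not a formal descent fact. Moreover, even over $\bar\k$ the BB filtration produces localization long exact sequences whose boundary maps must be shown to vanish, which requires a separate argument (this is essentially the content of the Vezzosi--Vistoli theorem, which works intrinsically over $\k$ rather than via base change). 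Your second alternative --- ``extract this injectivity directly from Thomason's work'' --- is too vague to constitute a proof: Thomason's localization theorem is an ingredient, but the concentration statement in the form needed is Vezzosi--Vistoli's, and that is the reference you should cite.
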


\begin{proof}
It follows from
\cite[Theorem~2]{vezzosi-vistoli;higher-algebraic-diagonalizable} that
the restriction map $K^T_*(X)\to K^T_*(X^T)$ is injective.  Now apply
(the algebraic K-theory analogue of) Corollary
\ref{corollary;abelian-torsion}\eqref{item;injective}.
\end{proof}

For a nonsingular variety the theories $K_G^*(X)$ and $K^G_*(X)$ are
isomorphic and $K^G_*(X)$ is a graded commutative ring.  (See e.g.\
\cite[appendix]{vezzosi-vistoli;higher-algebraic-finite}.)  The result
of Section \ref{section;duality} therefore extends to algebraic
K-theory in the following way.

\begin{proposition}\label{proposition;orthogonal-algebraic}
Assume that $\pi_1(G)$ is torsion-free.  Let $X$ be a smooth
quasi-projective $G$-variety over $\k$.  Then the pairing
$$\ca{P}_X\colon K^T_*(X)\times K^T_*(X)\to K^G_*(X)$$
defined by $\ca{P}_X(a_1,a_2)=j_*(a_1a_2)$ is nonsingular.  Hence
$$
K^T_*(X)\cong\Hom_{K^G_*(X)}(K^T_*(X),K^G_*(X))
$$
as $\Z$-graded left $K^G_*(X)$-modules.
\end{proposition}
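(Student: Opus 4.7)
The plan is to emulate the proof of Proposition~\ref{proposition;orthogonal}, producing explicit $\ca{P}_X$-dual bases of $K^T_*(X)$ by pulling back a $\ca{P}$-dual pair of bases of $R(T)$. The smoothness hypothesis on $X$ supplies both the graded-commutative ring structure on $K^G_*(X)\cong K_G^*(X)$ (so that the target $\Hom_{K^G_*(X)}(K^T_*(X),K^G_*(X))$ makes sense) and the product structure on $K^T_*(X)$ needed to form $\ca{P}_X(a_1,a_2)=j_*(a_1a_2)$.

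Since $\pi_1(G)$ is torsion-free, Pittie--Steinberg makes $R(T)$ free of rank $\abs{W}$ over $R(G)$ and Kazhdan--Lusztig renders the pairing $\ca{P}$ of \eqref{equation;pairing} nonsingular. Choose any $R(G)$-basis $(u_w)_{w\in W}$ of $R(T)$ and let $(u^w)_{w\in W}$ be its $\ca{P}$-dual basis. Writing $p=p_X\colon X\to\pt$ and setting $\bar u_w=p^*(u_w)$ and $\bar u^w=p^*(u^w)$ in $K^T_*(X)$, Merkurjev's K\"unneth Theorem~\ref{theorem;merkurjev} guarantees that $(\bar u_w)_{w\in W}$ is a basis of $K^T_*(X)$ as a $K^G_*(X)$-module. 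I would then verify that $(\bar u^w)_{w\in W}$ is the $\ca{P}_X$-dual basis; this immediately implies that $\ca{P}_X$ is nonsingular and yields the claimed isomorphism, and the grading is respected since all $u_w,u^w$ sit in degree~$0$.

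The core step is the naturality identity
\[
\ca{P}_X(p^*(a_1),p^*(a_2))=p^*(\ca{P}(a_1,a_2))\qquad(a_1,a_2\in R(T)),
\]
the algebraic counterpart of \eqref{equation;natural-pairing}. Granting this, the same computation as in the topological case,
\[
\ca{P}_X(\bar u_w,\bar u^{w'})=p^*(\ca{P}(u_w,u^{w'}))=p^*(\delta_{w,w'})=\delta_{w,w'}\cdot[\ca{O}_X],
\]
finishes the argument. The identity reduces to the base-change equality $j_{X,*}\circ p_X^*=p_X^*\circ j_{\pt,*}$. Unwinding the definition $j_*=\pr_*(i^*)^{-1}(k^*)^{-1}$ from Section~\ref{section;scheme}, the latter reduces in turn to flat base change of $\pr_*$ along the cartesian square with horizontal arrows $p_X\times\id_{\ca{B}}$ and $p_X$ and with vertical arrows the projections $X\times\ca{B}\to X$ and $\ca{B}\to\pt$, together with the obvious compatibilities of $i^*$ and $k^*$ with pullback along $p_X$. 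The main obstacle is precisely this base-change verification in the equivariant algebraic setting; it follows from the general naturality properties of Quillen's higher K-functors as developed by Thomason and Merkurjev, but one must be careful to check that the base-change isomorphism is compatible with the identifications $i^*$ and $k^*$ used in defining $j_*$.
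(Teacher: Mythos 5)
Your proof is correct and takes exactly the approach the paper intends: it mirrors the proof of Proposition~\ref{proposition;orthogonal}, replacing the topological K\"unneth theorem with Merkurjev's (Theorem~\ref{theorem;merkurjev}), the topological naturality \eqref{equation;natural-pairing} with the base-change identity $j_{X,*}\circ p_X^*=p_X^*\circ j_{\pt,*}$, and invoking smoothness so that $K^G_*(X)\cong K_G^*(X)$ carries the ring structure needed for the pairing and the $\Hom$. The paper gives no explicit proof here, saying only that the result of Section~\ref{section;duality} extends; your argument, including the reduction of the naturality step to flat base change along the cartesian square formed by $p_X$ and the projections from $X\times\ca{B}$ and $\ca{B}$, is precisely the expected fill-in.
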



\bibliographystyle{amsplain}

\def\cprime{$'$}
\providecommand{\bysame}{\leavevmode\hbox to3em{\hrulefill}\thinspace}
\providecommand{\MR}{\relax\ifhmode\unskip\space\fi MR }
\providecommand{\MRhref}[2]{%
  \href{http://www.ams.org/mathscinet-getitem?mr=#1}{#2}
}
\providecommand{\href}[2]{#2}


\end{document}